\newtheorem{thm}{Theorem}[section]
\newtheorem{lemma}[thm]{Lemma} \newtheorem{cor}[thm]{Corollary}
\newtheorem{prop}[thm]{Proposition}
\theoremstyle{definition}
\newtheorem{defn}[thm]{Definition}
\newtheorem{conj}[thm]{Conjecture}
\newtheorem{example}[thm]{Example}
\newtheorem{remark}[thm]{Remark}
\providecommand{\new}[1]{#1}
\newcommand{\A}{\mathbb{A}}
\newcommand{\Z}{\mathbb Z}
\newcommand{\N}{\mathbb N}
\newcommand{\R}{\mathbb R}
\newcommand{\C}{\mathbb C}
\newcommand{\Q}{\mathbb Q}
\newcommand{\F}{\mathbb F}
\newcommand{\abfam}{\mathcal{A}}
\newcommand{\shO}{\mathcal{O}}
\newcommand{\PP}{\mathbb{P}}
\newcommand{\Tw}{\mathcal{T}_n}
\newcommand{\G}{\mathbb{G}}
\newcommand{\GITQ}{/ \! /}
\DeclareMathOperator{\diag}{diag}
\DeclareMathOperator{\charac}{char}
\DeclareMathOperator{\Id}{Id}
\DeclareMathOperator{\Spec}{Spec}
\DeclareMathOperator{\PGL}{PGL}
\DeclareMathOperator{\GL}{GL}
\DeclareMathOperator{\SL}{SL}
\DeclareMathOperator{\Mat}{Mat}
\DeclareMathOperator{\Dom}{Dom}
\DeclareMathOperator{\marked}{mark}
\DeclareMathOperator{\Pic}{Pic}
\DeclareMathOperator{\DynDom}{DynDom}
\DeclareMathOperator{\Weil}{Weil}
\DeclareMathOperator{\len}{len}
\DeclareMathOperator{\ord}{ord}
\newcommand*\subtxt[1]{_{\textnormal{#1}}}
\newcommand{\TwParamNice}{\mathcal{U}_n}
\newcommand{\LeftSpace}{\mathcal{V}_n}
\newcommand{\RightSpace}{\mathcal{W}_n}
\newcommand{\Dualize}{\Delta}
\newcommand{\DualizeBack}{\Dualize'}
\newcommand{\acLeft}{\alpha}
\newcommand{\acRight}{\beta}
\newcommand{\Shift}{\Sigma}
\newcommand{\Dplus}{D^{\scriptscriptstyle+}}
\newcommand{\Dminus}{D^{\scriptscriptstyle-}}
\newcommand{\tildeDplus}{\tilde{D}^{\scriptscriptstyle+}}
\newcommand{\tildeDminus}{\tilde{D}^{\scriptscriptstyle-}}
\newcommand{\Gm}{\G\subtxt{m}}
\newcommand{\boxnodd}{\boxed{\text{Odd } n}}
\newcommand{\boxneven}{\boxed{\text{Even } n}}
\newcommand{\Ind}{\mathcal{I}}
\newcommand{\nsrhmap}{\zeta}
\newcommand{\arithgenus}{{g\subtxt{a}}}
\DeclarePairedDelimiter\floor{\lfloor}{\rfloor}
\begin{document}

\title[Algebraic Dynamics of the Pentagram Map]{The Algebraic Dynamics of the Pentagram Map}
\author[M. H. Weinreich]{Max H. Weinreich}
\date{\today}
\email{maxhweinreich@gmail.com}
\address{Department of Mathematics, Brown University, Providence, RI 02906.
 ORCID: 0000-0002-0103-2245}
\keywords{pentagram map, spectral curve, discrete integrable system, algebraic dynamics}
\subjclass[2020]{Primary: 37J70; Secondary: 14E05, 37P05, 14H70}
\thanks{The author was supported by a National Science Foundation Graduate Research Fellowship under Grant No. 2040433.}

\maketitle

\begin{abstract}
The pentagram map, introduced by Schwartz in 1992, is a dynamical system on the moduli space of polygons in the projective plane. Its real and complex dynamics have been explored in detail. We study the pentagram map over an arbitrary algebraically closed field of characteristic not equal to 2. We prove that the pentagram map on twisted polygons is a discrete integrable system, in the sense of algebraic complete integrability: the pentagram map is birational to a self-map of a family of abelian varieties. This generalizes Soloviev's proof of complex integrability. In the course of the proof, we construct the moduli space of twisted $n$-gons, derive formulas for the pentagram map, and calculate the Lax representation by characteristic-independent methods.
\end{abstract}

\section{Introduction}
\label{sect_intro_penta}

\subsection{Main result} The pentagram map is a discrete dynamical system on the space of polygons in the projective plane. The map was introduced by Schwartz in 1992 for convex polygons in the real projective plane \cite{MR1181089}, but the definition extends to polygons in any projective plane. This paper describes the dynamics of the pentagram map in projective planes over algebraically closed fields, including positive characteristic. Our main result establishes algebro-geometric complete integrability of the pentagram map over any arbitrary algebraically closed field of characteristic not equal to 2.

\new{
\begin{defn} \label{def_ngon}
Let $n \geq 3$ be an integer. A \emph{closed $n$-gon}, or just \emph{$n$-gon}, is an ordered $n$-tuple of points $(v_1, \hdots, v_n) \in (\PP^2)^n$ in general linear position. The space of $n$-gons is a Zariski open subset of $(\PP^2)^n$.
\end{defn}
}
\new{
\begin{defn} \label{def_early_penta}
Let $n \geq 5.$ The \emph{pentagram map} is a rational self-map of the space of $n$-gons. The pentagram map sends an $n$-gon $(v_1, \hdots, v_n)$ to the $n$-gon $(w_1, \hdots, w_n)$, where $w_i$ is the intersection of the diagonals $\overline{v_{i-1} v_{i+1}}$ and $\overline{v_i v_{i+1}}$, and where we take the indices cyclically modulo $n$; see Figure \ref{fig_nonagon}.
\end{defn}}

\begin{figure}[b]
\begin{center}
\begin{tikzpicture}
\draw[very thick] (2, 0) -- (3.4, 0.6) -- (4, 1.8) -- (3.6,2.9) -- (2.5,3.5) -- (1.3, 3.4) -- (0.4, 2.7) -- (0, 1.6) -- (0.6, 0.5) -- cycle;
\draw[->] (4.5,1.75) -- (5,1.75);
\draw[thin]
(7.5, 0) -- (8.9, 0.6) -- (9.5, 1.8) -- (9.1,2.9) -- (8,3.5) -- (6.8, 3.4) -- (5.9, 2.7) -- (5.5, 1.6) -- (6.1, 0.5) -- cycle;
\draw[thin]
(7.5, 0) -- (9.5, 1.8)  -- (8,3.5)  -- (5.9, 2.7) -- (6.1, 0.5) -- (8.9, 0.6)-- (9.1,2.9) -- (6.8, 3.4) -- (5.5, 1.6) -- cycle;
\draw[very thick]
(8.136,0.572) -- (8.962, 1.316) -- (9.05,2.31) -- (8.39,3.05) -- (7.4, 3.27) -- (6.44,2.91) -- (5.944,2.215) -- (6.039, 1.168) -- (6.842, 0.526) -- cycle;
\end{tikzpicture}
\end{center}
\caption{The pentagram map applied to a $9$-gon.}
\label{fig_nonagon}
\end{figure}
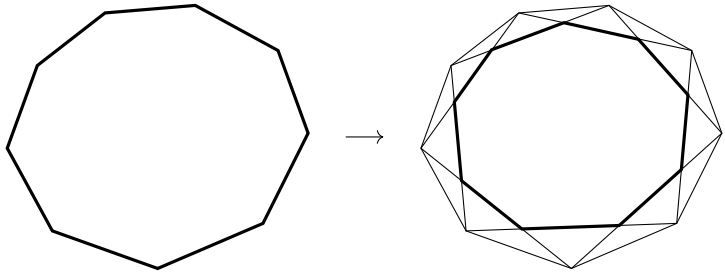

Since the construction is projectively natural, we get an induced rational self-map of the moduli space of $n$-gons in $\PP^2$ up to projective equivalence. From now on, by the \emph{pentagram map}, we mean this map on moduli space.

Schwartz, following computational evidence, conjectured that the real pentagram map might be a rare example of a \emph{Liouville-Arnold discrete integrable system}. This was proved in 2010-11 by Ovsienko, Schwartz, and Tabachnikov \cite{MR2679816, MR3102478}, leading to an explosion of work on the pentagram map, including higher-dimensional generalizations \cite{MR4430020, MR3118623, MR3282373} and connections to cluster algebras \cite{MR3534837, MR2793031, MR3282370}, Poisson-Lie groups \cite{MR3675462, MR4430020}, and integrable partial differential equations \cite{MR3041627}.

Liouville-Arnold integrability is an extremely strong property which almost completely describes the dynamics. Roughly, it means:
\begin{itemize}
    \item the domain of the map (dimension $\approx 2n$) admits a fibration by invariant submanifolds of dimension $\approx n$;
    \item each of these submanifolds may be identified with an open subset of a real torus of dimension $\approx n$, such that on each torus, some iterate of the pentagram map is a translation.
\end{itemize}

In this paper, we study the closely related \emph{pentagram map on twisted polygons}, also introduced by Schwartz \cite{MR2434454}. The main integrability theorems for the pentagram map on closed polygons were proved first for twisted polygons \cite{MR3102478, MR3161305}.

\begin{defn}
\new{
A \emph{twisted $n$-gon} is a sequence $(v_i)_{i \in \Z}$ in $\PP^2$ with the property that there exists a projective transformation $M \in \PGL_3$, called the \emph{monodromy}, such that, for all $i \in \Z$,
\begin{equation} \label{eq_def_twisted}
    Mv_i = v_{i + n}.
\end{equation}
We also impose some nondegeneracy conditions; see Definition \ref{def_twisted_ngon} for details. The \emph{pentagram map on the parameter space of twisted $n$-gons} sends $(v_i)_{i \in \Z}$ to $(w_i)_{i \in \Z}$, where $w_i$ is the intersection of the diagonals $\overline{v_{i-1} v_{i+1}}$ and $\overline{v_i v_{i+1}}$; see Figure \ref{fig_twisted_pentagram}.
For any $T \in \PGL_3$ and twisted $n$-gon $(v_i)_{i \in \Z}$, the twisted $n$-gons $(v_i)_{i \in \Z}$ and $(Tv_i)_{i \in \Z}$ are \emph{projectively equivalent}. The \emph{moduli space of twisted polygons}, denoted $\Tw$, is the quotient space of twisted polygons up to projective equivalence; we construct $\Tw$ as a variety in Section \ref{sect_moduli}. Since the pentagram map on the parameter space of twisted polygons is projectively natural, it descends to a rational self-map
$$f \colon \Tw \dashrightarrow \Tw.$$
}

\new{
From now on, by the \emph{pentagram map}, we mean this map $f$ on the moduli space.
}
\end{defn}

\new{To motivate the definition, notice that the definition of the pentagram map on closed polygons is combinatorially local, in the sense that each vertex of the image polygon $v$ depends only on four consecutive vertices of $v$. Thus the pentagram map extends to a self-map of the space of sequences $(\PP^2)^\Z$, and $n$-gons correspond to $n$-periodic sequences. Studying the pentagram map in this larger space is difficult because $(\PP^2)^\Z$ is infinite-dimensional. The constraint \eqref{eq_def_twisted} defines a finite-dimensional domain for the pentagram map. Closed $n$-gons are twisted $n$-gons that have monodromy $M = 1$. We think of closedness as a global constraint on the geometry of a twisted polygon.}

Our main theorem is an algebro-geometric version of discrete integrability which holds in characteristic $0$ and characteristic $p$.

\begin{thm} \label{thm_main_1}
   Let $k$ be an algebraically closed field.
   \begin{enumerate}
   \item The moduli space $\Tw$ of twisted $n$-gons over $k$ exists as an algebraic variety, and $\Tw$ is a rational variety of dimension $2n$.
   \item  Assume that $\charac k \neq 2$. Then there exists a family of abelian varieties
   $$\mathcal{A} \to S$$ and a birational map
   $$\delta \colon \Tw \dashrightarrow \mathcal{A},$$
   such that, via the identification $\delta$, the fibers of $\mathcal{A} \to S$ are invariant subvarieties for the pentagram map.
   \item The behavior of the pentagram map on $\mathcal{A}$ depends on the parity of $n$:
      \begin{itemize}
       \item \boxnodd: The fibers of $\mathcal{A}$ 
       are Jacobian varieties of dimension $n - 1$, and $\delta$ identifies the pentagram map with a translation by a section of $\mathcal{A} \to S$.
       \item \boxneven: The fibers of $\mathcal{A}$ are pairs of Jacobian varieties of dimension $n - 2$. Via the identification $\delta$, the pentagram map sends each Jacobian isomorphically to the other in its pair. The map $\delta$ identifies the second iterate of the pentagram map with a translation by a section of $\mathcal{A} \to S$.
   \end{itemize}
   \end{enumerate}
\end{thm}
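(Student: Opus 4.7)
The plan is to prove the three parts in sequence, mirroring Soloviev's complex-analytic proof of integrability but replacing each transcendental step with an algebraic one valid in any characteristic $\neq 2$. For part (1), I parameterize twisted $n$-gons by tuples $(v_1,\dots,v_n; M) \in (\PP^2)^n \times \PGL_3$, a variety of dimension $2n+8$ carrying a generically free $\PGL_3$-action; the moduli space $\Tw$ is the quotient, constructed either via GIT or directly by patching invariants. Rationality and the dimension count $2n$ follow from Schwartz's corner invariants $(x_i, y_i)_{i \in \Z/n\Z}$ — cross-ratios at each vertex — which give $2n$ rational functions separating orbits on a dense open and so identify $\Tw$ birationally with $\A^{2n}$. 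Explicit formulas for $f$ in these coordinates follow from a direct computation.

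For parts (2)--(3), the engine is a Lax representation with spectral parameter. Using the corner-coordinate formulas, I construct $3\times 3$ matrices $L_i(t)$ attached to each vertex, depending rationally on $t$ and on $(x_i, y_i)$, such that the monodromy product $L(t) = L_n(t)\cdots L_1(t)$ transforms by conjugation $L(t) \mapsto g(t) L(t) g(t)^{-1}$ under the pentagram map $f$. Hence $P(t,\mu) = \det(\mu I - L(t))$ is $f$-invariant: its coefficients cut out the base $S$, and its vanishing locus defines a flat family of spectral curves $\Sigma \to S$. To each polygon I attach the \emph{eigenvector line bundle}, whose fiber over $(t,\mu) \in \Sigma$ is the $\mu$-eigenline of $L(t)$; its class in the relative Jacobian $\mathcal{A} \to S$ yields the birational map $\delta \colon \Tw \dashrightarrow \mathcal{A}$. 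Since conjugation of $L(t)$ shifts the eigenline class only by a globally defined spectral divisor, $f$ is identified with translation by a section of $\mathcal{A} \to S$.

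The parity dichotomy in (3) arises from an algebraic symmetry of $P(t,\mu)$ present only for even $n$. For odd $n$, I check that $P$ is generically irreducible, yielding a smooth spectral curve of arithmetic genus $n-1$, so $\delta$ sees $f$ as an honest Jacobian translation. For even $n$, $P$ factors generically into two pieces of arithmetic genus $n-2$ related by an involution on the spectral plane; the eigenvector bundle lives in the Jacobian of one component, but $f$ moves it to the Jacobian of the other, so $f$ swaps the two factors of $\mathcal{A}$ and only $f^2$ is a translation. Establishing the factorization reduces to an explicit identity on $L(t)$ detecting an extra $\Z/2$-symmetry of the monodromy.

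The main obstacle is transporting this spectral machinery to positive characteristic. Soloviev's original argument invokes Baker--Akhiezer functions and the analytic uniformization $\mathrm{Jac}(\Sigma) \cong \C^g/\Lambda$, neither of which is available in characteristic $p$. I would instead: (i) work with generalized or compactified Jacobians so that singular or non-reduced spectral curves cause no trouble; (ii) construct the eigenvector bundle as a coherent sheaf in flat families so that $\delta$ is visibly algebraic; (iii) identify $f$ with a translation by comparing divisor classes via an algebraic Abel--Jacobi theorem, and by invoking the rigidity lemma that any self-morphism of an abelian scheme commuting with the zero section is a translation. The hypothesis $\charac k \neq 2$ enters through normalizations (in the Lax matrix and in reconstructing the polygon from its spectral data) that require inverting $2$.
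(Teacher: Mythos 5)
Your framework — GIT moduli, corner invariants, Lax representation with spectral parameter, spectral curve, eigenvector line bundle, translation on the relative Jacobian — matches the paper's. But three key technical claims are wrong in ways that would derail the argument.

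\textbf{The spectral curve is singular and remains irreducible for even $n$.} You claim the spectral curve is smooth with arithmetic genus $n-1$; in fact the plane model has degree $n+3$ (so arithmetic genus $\frac{(n+1)(n+2)}{2}$) and carries singularities at $[1:0:0]$ and $[0:1:0]$ that must be resolved by explicit blowups, with the geometric genus of the normalization then computed via Riemann--Hurwitz. More seriously, your mechanism for the even-$n$ dichotomy — a factorization of $P(t,\mu)$ into two pieces of genus $n-2$ — does not exist. The paper proves the spectral curve is \emph{integral} for both parities (Theorem \ref{thm_spec_curve_genus}). The ``pair of Jacobians'' arises not from a reducible curve but from the fact that, when $n$ is even, the desingularization has two pairs of geometric points $\{O_2,O_3\}$, $\{W_2,W_3\}$ above infinity, defined only over a degree-$4$ cover $S^{\marked} \to S$ obtained by adjoining roots of two quadratics. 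The spectral transform requires marking these points, the pentagram map's scaling involution swaps the markings, and so $f$ sends the Jacobian with one marking to the Jacobian with the other. Chasing an algebraic identity on $L(t)$ forcing $P$ to factor would fail.

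\textbf{The role of $\charac k \neq 2$ is misidentified.} It is not about inverting $2$ in normalizations of the Lax matrix or the reconstruction; the paper explicitly observes that the divisions by $2$ in Soloviev's inverse-transform calculation enter only in building a symplectic form and are not needed here. The hypothesis comes from the genus computation: one must locate a ``good fiber'' $\Gamma'_s$ whose singularities above infinity resolve as described in Table \ref{table_resolution_of_singularities}, and whose remaining ramification is \emph{tame}. The relevant ramification indices are $2$, so $\charac k \neq 2$ is precisely what guarantees tameness, allowing $\omega(P) = e(P)-1$ in Riemann--Hurwitz. In characteristic $2$ the definition of good fiber additionally forces $I_0, J_0 \neq 0$, so any good fiber needs at least six monomials, and the Jacobian-criterion check becomes intractable. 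This is Remark \ref{rem_source_of_polys}.

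\textbf{You omit the one-point calculation.} The paper's central new contribution beyond Soloviev is exactly here: to determine the genus of the \emph{generic} spectral curve, an upper bound (from the singularity structure of the resolution) is not enough — one must exhibit a single explicit fiber attaining that bound. Your proposal says you would ``check that $P$ is generically irreducible,'' but this does not pin down the genus, since the generic fiber could have extra, non-generic singularities in the main affine patch without further verification. The paper points out that this step is missing from Soloviev's argument and supplies it by giving concrete defining equations (Lemmas \ref{lemma_case_not_2n} and \ref{lemma_case_n}), with separate polynomials depending on whether $\charac k \mid n$. Any rewrite of your proposal needs to include such an explicit special-fiber verification, and the choice must be characteristic-sensitive.
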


By a \emph{family of abelian varieties} over $k$, we mean a map of $k$-schemes $A \to S$ such that each fiber is isomorphic to an abelian variety. We do not assume any choice of zero-section $S \to A$.

Theorem \ref{thm_main_1} can in fact be made totally explicit. Our methods give equations for the invariant subvarieties and the section that corresponds to the pentagram map.

The case of Theorem \ref{thm_main_1} with base field $k = \C$ is essentially due to Soloviev \cite{MR3161305}. We extend the result to algebraically closed fields of any characteristic except~$2$. 
\new{In fact, we expect that, with some additional work, our proof would extend to characteristic~$2$; see Remark \ref{rem_source_of_polys}.}

\new{
\begin{remark}A natural next step would be to formulate a notion of algebraic complete integrability over $\Spec \Z$. To explain this informally, while we study the pentagram map over each field independently, it is also true that any algebraic dynamical system $f$ defined only using integers can be thought of as a self-map of a some scheme over $\Spec \Z$. The scheme $\Spec \Z$ is a 1-dimensional topological space, and each prime number corresponds to a point of $\Spec \Z$. The codimension-1 fiber at prime $p$ corresponds to the dynamical system induced by $f$ over $\F_p$, and these fibers are all $f$-invariant. Of course, integrable systems have a more refined invariant fibration than this, usually including some ``degenerate'' leaves with interesting but non-generic dynamics. We propose that the bad primes for an integrable system over $\Z$, if any, should be thought of as degenerate leaves. Thus, ``generic'' properties of an integrable system over $\Z$ should hold at all but finitely many primes.
\end{remark}}

Theorem \ref{thm_main_1} has strong consequences for the arithmetic dynamics of the pentagram map over finite fields. For instance, the orbits of the pentagram map over a finite field $\F_q$ are much smaller than one would expect for a randomly chosen rational self-map of $\PP^{2n}$, thanks to standard estimates for point counts on varieties over finite fields. \new{Over $\F_q$, at least when $q$ is odd, the domain $\mathcal{T}_n$ of the map has $O(q^{2n})$ elements, but these can be divided into invariant subsets of cardinality $O(q^{n-1})$ or $O(q^{n -2})$, depending on the parity of $n$. On sufficiently generic invariant subsets, the well-defined orbits of the pentagram map within an invariant subset all have the same period. Note that well-definedness of orbits is an issue because} the pentagram map is a rational map rather than a morphism, since some degenerate polygons do not have well-defined images. A heuristic argument suggests that almost all orbits of the pentagram map eventually produce degenerate polygons. We formalize this idea in Conjecture \ref{conj_small_domain}.

Theorem \ref{thm_main_1} also tells us something about the real pentagram map. When $n$ is even, a typical nonperiodic orbit of a twisted $n$-gon fills out at least 2 tori, by taking real parts of $\abfam$. We show an example in Figure \ref{fig_fourgon}.

\begin{figure}[h]
\begin{center}
\includegraphics[width=2.4in]{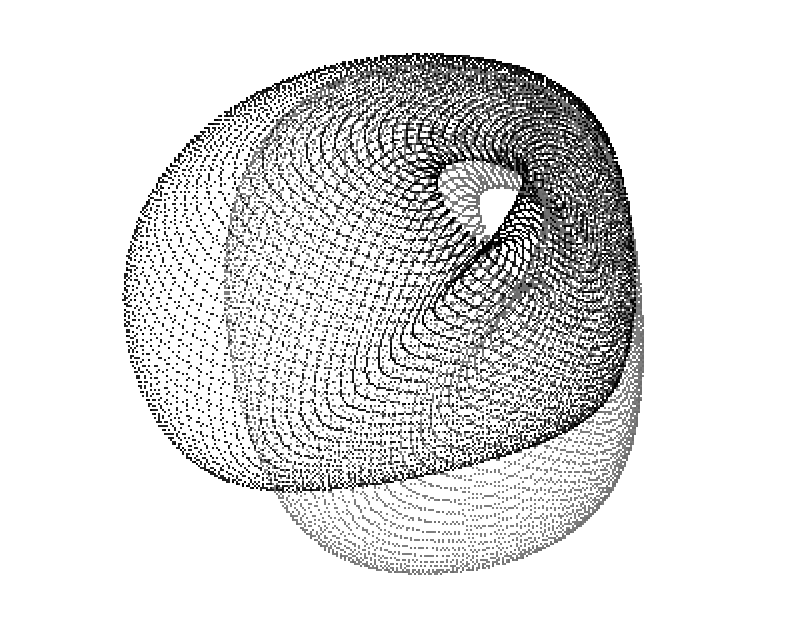}
\end{center}
\caption{The first few thousand iterates of a real twisted $4$-gon, depicted via a 2-dimensional projection from the 8-dimensional moduli space $\mathcal{T}_4$. Odd-indexed and even-indexed iterates alternate between two real 2-dimensional tori.}
\label{fig_fourgon}
\end{figure}

\new{A final application of Theorem \ref{thm_main_1} is to the pentagram map over $\Q$. The \emph{logarithmic height} of a rational number is a measure of its arithmetic complexity, approximately equal to the number of digits needed to write down the number in lowest terms; see \cite{ads}. One can extend this definition to rational points in projective space and ask about the height growth along orbits of a dynamical system. Conjecturally, polynomial growth of height in orbits is an indicator of integrability, but few theoretical results in this direction are known \cite{MR2131425}. In an experimental study, Khesin and Soloviev observed polynomial height growth in orbits of some generalized pentagram maps, and exponential height growth in others, providing heuristic evidence of integrability vs. non-integrability \cite{MR3282373}. We confirm this empirical result for the standard pentagram map: for sufficiently generic orbits, the logarithmic height growth is not just polynomial, but in fact linear (Corollary \ref{cor_height_growth}).}

\begin{remark}
The methods of this paper should also apply to closed polygons, with additional work. In that case, we expect an analogue of Theorem \ref{thm_main_1} to hold with the dimensions $n - 1$  and $n - 2$ replaced by $n - 4$ and $n - 5$. This was shown over $\C$ by Soloviev \cite{MR3161305}.
\end{remark}

\subsection{Sketch of proof of Theorem \ref{thm_main_1}}
In the complex theory of integrability, both algebraic and analytic methods are available. Working in characteristic $p$, we are restricted to algebraic techniques. We follow the route of Lax representations and spectral curves. Even though these techniques are classified as ``analytic'' in some textbooks, e.g. \cite{MR1995460}, they can be adapted to other algebraically closed fields.

The first step is to construct the moduli space $\Tw$ of twisted $n$-gons as an algebraic variety (Theorem \ref{thm_moduli}). The group $\PGL_3$ of projective transformations $M$ has dimension 8, so the parameter space of twisted $n$-gons has dimension $2n + 8$. Since the pentagram map commutes with $\PGL_3$, it descends to the moduli space $\Tw$ of twisted $n$-gons up to projective equivalence. The variety $\Tw$ is $2n$-dimensional, and the pentagram map is a rational self-map $f$. \new{ Since the projective equivalence class of $M$ is $f$-invariant, we have at least two algebraically independent integrals. }

The main technique we use to construct the moduli space $\Tw$ is geometric invariant theory (GIT), which supplies tools for taking quotients of varieties by infinite groups. The construction of $\Tw$ is not specific to the pentagram map and could have other applications. In fact, like Mumford's moduli space of closed polygons \cite{GIT}, the moduli space $\Tw$ admits a GIT semistable compactification with an explicit combinatorial description \cite{weinreich2021git}.

The next step is to compute formulas for the pentagram map. Then we derive a Lax representation with spectral parameter, which is an embedding of the dynamics into a matrix group. The formulas and Lax representation have already been derived in various guises in the literature, but our setup is a little different than the usual one, so we include full detail.

From this point, our proof follows the structure of Soloviev's proof of complex integrability of the pentagram map, modified to allow for positive characteristic \cite{MR3161305}. The characteristic polynomial of the Lax representation gives us a spectral curve. The next step is to show that the spectral curve is an integral curve (in the scheme-theoretic sense) of genus $n - 1$ when $n$ is odd and $n - 2$ when $n$ is even. These computations are technical in nature.

Then we construct the direct spectral transform, the birational map $\delta \colon \Tw \dashrightarrow \mathcal{A}$ of Theorem \ref{thm_main_1}. For the construction of $\delta$, the argument in \cite{MR3161305} goes through essentially without changes.

The brunt of the extra work in characteristic $p$ is the computation of the genus of the spectral curve, which is key to ensuring that the Jacobian has the right dimension. Since the genus of the spectral curve can change after reducing modulo $p$, we need to make sure that these curves have a kind of good reduction.

\subsection{Road map}

Section \ref{sect_related_work} describes related work. In Section \ref{sect_moduli}, we construct the moduli space $\Tw$ of twisted $n$-gons. In Section \ref{sect_formulas}, we derive formulas for the pentagram map. In Section \ref{sect_lax}, we derive the Lax representation. In Section \ref{sect_spectral_curve}, we analyze the spectral curve. In Section \ref{sect_spectral_transform}, we construct the direct spectral transform, finish the proof of Theorem \ref{thm_main_1}\new{, and study height growth}. In Section \ref{sect_dynamical_domains}, we formulate a conjecture that orbits of the pentagram map over a finite field almost always hit the degeneracy locus of the map.

\subsection{Acknowledgments}
The author thanks his advisor, Joe Silverman, for many hours of discussion of this project and for a careful reading of the manuscript. Further thanks to Dan Abramovich, Niklas Affolter, Ron Donagi, Sarah Griffith, Brendan Hassett, Boris Khesin, Anton Izosimov, John Roberts, Richard Schwartz, and Serge Tabachnikov for helpful conversations. The author was supported by an NSF Graduate Research Fellowship.

\new{An earlier version of this work appeared as a chapter of the author's PhD thesis \cite[Chapter 3]{weinreich_thesis}.}

\section{Related work} \label{sect_related_work}

There are many classical examples of continuous-time integrable systems originating in physics, but discrete-time examples are few and far between. Finding new examples is a major research area \cite{MR2088466}.

\new{The main antecedents of our result are the Liouville-Arnold integrability of the pentagram map \cite{MR2679816}, Soloviev's proof of complex integrability \cite{MR3161305} and Izosimov's study of the pentagram map via difference operators \cite{MR4430020}.}

\new{
The special case of our Theorem \ref{thm_main_1} where the base field $k$ is $\C$ recovers the main theorems of Soloviev \cite[Theorem A, Theorem B]{MR3161305}. However, the setup in the two papers is different, as we now explain. When $n$ is even, our Theorem \ref{thm_main_1} explains that the invariant subvarieties generically have two irreducible components, each isomorphic to a Jacobian, and the pentagram map sends each component into the other, isomorphically. The two components correspond to two ways of marking certain special points on the spectral curve, which come from making a choice of square root; see Sections \ref{sect_spectral_curve} and \ref{sect_spectral_transform}. In contrast, Theorem A of \cite{MR3161305} states that ``Each torus (Jacobian $J(\Gamma)$) is invariant for the pentagram map.'' Theorem B of \cite{MR3161305} describes the dynamics when $n$ is even as ``staircase-like'', that is, the pentagram map is not treated as a single-valued algebraic map, but rather depends on time, alternating between two translations on a single Jacobian. The reason for the discrepancy is that the two components of each invariant fiber have been identified in \cite{MR3161305}, by forgetting the marking. The choice of square root (hence the marking) flips upon application of the pentagram map, so the pentagram map is not a well-defined self-map of the single Jacobian appearing in \cite{MR3161305}.}

\new{We also fill a gap in the proof of complex integrability \cite[Theorem 2.9]{MR3161305}. This theorem concerns the singularities and genus of the generic spectral curve. To find the genus of the generic curve in a family, one needs an upper bound on the genus together with a ``one-point calculation" showing that the upper bound is achieved somewhere. This one-point calculation plays a role somewhat like checking the rank of the Poisson structure at a single point, as in \cite{MR2679816}. The argument in \cite[Theorem 2.9]{MR3161305} does not include the one-point calculation. Specifically, the proof asserts that the generic spectral curve, defined by a plane equation $R(k,z) = 0$, is nonsingular except at infinity. This is true, but difficult to justify; nonsingularity arguments usually depend on checking the nonvanishing of a resultant, but here the joint resultant of $R, \partial R / \partial k, \partial R / \partial z$ does in fact vanish, due to the singularity at infinity. We replace this assertion with several one-point calculations in Section \ref{sect_spectral_curve}; see in particular the casework depending on characteristic and the data in Tables \ref{table_resolution_of_singularities} and \ref{table_genus_computation_case_not_2n}.}

There are many ways to generalize the pentagram map; see, for instance, \cite{MR3534837, MR4430020, MR3118623}. The height growth in orbits of these generalized pentagram maps offers empirical evidence of integrability vs. non-integrability; see \cite{MR3282373}.

Theorem \ref{thm_main_1} describes the \emph{generic} behavior of the pentagram map. Many special classes of twisted $n$-gons have more idiosyncratic dynamics, including closed polygons \cite{MR3102478}, Poncelet polygons \cite{MR4460093}, and axis-aligned polygons \cite{MR3282366}. \new{Most recently, Schwartz has established a remarkable pentagram rigidity conjecture for the 3-diagonal map on centrally symmetric octagons \cite{schwartz2022pentagram}.}

There is also a substantial literature connecting the pentagram map to other fields, including cluster algebras \cite{MR3534837, MR2793031, MR3282370}, projective incidence theorems \cite{MR2721306}, Poisson-Lie groups \cite{MR3675462, MR4430020}, and integrable PDEs \cite{MR2679816}.

Our construction of the moduli space $\Tw$ of twisted $n$-gons follows an idea of Izosimov to take an appropriate quotient of a space of difference operators. This idea is introduced in \cite[Proposition 3.3]{MR4430020}, where the identification is shown to be a homeomorphism. We promote it to an algebraic isomorphism (Theorem \ref{thm_moduli}).

Singularity confinement, a feature of many discrete integrable systems, was explored in \cite{MR3104731}. Singularity confinement is closely related to the existence of a partial compactification on which the pentagram map becomes a morphism, which we construct in Theorem 1.1. 

\new{The corner invariants coordinatize the space of twisted $n$-gons by cross-ratios. Cross-ratios are a frequent source of compactifications in the theory of moduli spaces, for instance, the Naruki cross-ratio variety \cite{MR662660}.}

Little is known in general about the dynamics of rational maps on $\PP^n$ over finite fields; see the survey \cite[Section 18]{MR4007163}, and for arithmetic dynamics more generally, see \cite{ads}. Even in the simplest case, polynomials on $\PP^1$, we have only scattered pieces of the whole picture, and rational maps in higher dimension are even more complicated. For reversible maps and integrable systems, there are some probabilistic models for the statistics of the orbits \cite{MR2525820, siu_thesis}. Our work is motivated by the need for concrete examples of rational maps over finite fields for which the dynamics can be totally described.

The thesis \cite{kanki_thesis} collects some results on integrable systems over finite fields and, taking a more arithmetic dynamical angle, suggests viewing integrability over finite fields as a kind of $p$-adic singularity confinement, or ``almost good reduction.'' The other existing works on discrete integrable systems over finite fields focus on the construction of cellular automata with solitonic properties. This is also a nice perspective for our setting. Theorem \ref{thm_main_1} shows that the pentagram map over $\F_q$ defines an integrable cellular automaton on an alphabet of $q^2 + q + 2$ cell states, corresponding to the points of $\PP^2(\F_q)$ together with an extra state to represent degeneration of the map. The Toda molecule over $\F_{2^m}$ is studied in these terms in \cite{nakamura_mukaihira_finite_toda}. The articles \cite{MR2105640, MR2048090, MR1984011, MR2970774} study the discrete KdV and KP equations and the Hirota equation over finite fields as cellular automata. These articles restrict attention to genus 0 and 2 spectral curves, with a focus on special solutions which do not degenerate.

These articles apply the formulas of integrable systems in characteristic $0$ to finite fields. We emphasize that discrete integrability in characteristic $0$ does not imply the same over characteristic $p$. While the conserved quantities still exist, their algebraic independence is not guaranteed. The geometry of the spectral curve, its genus, and the application of the Riemann-Hurwitz formula are all characteristic-dependent. Indeed, by restricting attention to a subfamily of polygons where the spectral curve has worse singularities mod $p$, we can force the loss of algebro-geometric integrability in that family. This means the focus of proving integrability is on showing that the generic spectral curve has good reduction.

Ultradiscretization, or tropicalization, is a totally different idea for producing integrable systems valued in finite sets; see \cite{ultradiscretization}.

Some other surprising connections between integrable systems and number theory in finite characteristic are suggested in \cite{MR1837892}.

\section{The moduli space of twisted \texorpdfstring{$n$}{n}-gons} \label{sect_moduli}

\new{In this paper, we study the pentagram map on twisted polygons. The space of twisted polygons is larger than the space of closed polygons, but is still finite-dimensional.}

\begin{defn} \label{def_twisted_ngon}
Let $n \geq 4$ be a positive integer.
A \emph{twisted $n$-gon} is a $\Z$-indexed sequence $(v_i)$ in $\PP^2$ with the properties:
\begin{itemize}
    \item There exists a projective transformation $M \in \PGL_3$ such that, for all $i$,
$$ Mv_i = v_{i+n}.$$
    \item A nondegeneracy condition: in each 5-tuple of consecutive points 
    $$(v_i, v_{i+1}, v_{i+2}, v_{i+3}, v_{i+4}),$$
    no 3 points are collinear, except possibly $v_i, v_{i+2}, v_{i+4}$.
\end{itemize}
The transformation $M$ is called the \emph{monodromy} of the twisted $n$-gon. By the nondegeneracy condition, the monodromy $M$ is unique. A \emph{closed polygon} is a twisted polygon for which $M = 1$.
The set of twisted $n$-gons is denoted $\TwParamNice$.
\end{defn}

\new{
\begin{remark}
There is variation in the literature in the definition of twisted $n$-gon. We chose our definition in order to get a convenient moduli space. The most frequently used definition, from \cite{MR2679816}, only requires consecutive triples to be in general position. This definition is too permissive for our purposes because then the geometric quotient does not exist. Another common definition asks for all the points to be in general position. But this is too strict for our setting, since working over $\bar{\F}_p$, there are no such sequences.
\end{remark}
}
\new{
\begin{prop}
Let $n \geq 4$. The set $\TwParamNice$ of twisted $n$-gons may be identified with a Zariski open subset of $(\PP^2)^n \times \PGL_3$ via the map
$$\TwParamNice \hookrightarrow (\PP^2)^n \times \PGL_3,$$
$$(v_i)_{i \in \Z} \mapsto (v_1, \hdots, v_n, M),$$
where $M$ is the unique matrix that sends the 4-tuple $(v_1, v_2, v_3, v_4)$ to $(v_{n + 1}, v_{n+2}, v_{n+3}, v_{n+4})$.
\end{prop}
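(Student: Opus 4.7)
The plan is to exhibit an explicit set-theoretic inverse to the stated map and then verify that the image is carved out by the non-vanishing of finitely many polynomials.

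First I would check that the map is well-defined. Applying the nondegeneracy condition to the 5-tuple $(v_1, v_2, v_3, v_4, v_5)$ shows that the only triple among these five points permitted to be collinear is $\{v_1, v_3, v_5\}$; in particular $(v_1, v_2, v_3, v_4)$ is a projective frame in $\PP^2$. The same argument applied to $(v_{n+1}, \ldots, v_{n+5})$ shows that $(v_{n+1}, v_{n+2}, v_{n+3}, v_{n+4})$ is also a projective frame. Since any two projective frames in $\PP^2$ are related by a unique element of $\PGL_3$, the element $M$ produced by the map is well-defined; and it necessarily agrees with the monodromy of the sequence, because the monodromy itself sends $v_i$ to $v_{i+n}$ for $i = 1,2,3,4$.

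To build the candidate inverse, I would start with an arbitrary $(v_1, \ldots, v_n, M) \in (\PP^2)^n \times \PGL_3$ and extend to a $\Z$-indexed sequence by setting $v_{i+kn} = M^k v_i$ for $i \in \{1, \ldots, n\}$ and $k \in \Z$. This gives the unique $\Z$-sequence extending $(v_1, \ldots, v_n)$ and satisfying $M v_i = v_{i+n}$. It is a twisted $n$-gon precisely when the extended sequence satisfies the nondegeneracy condition, so the two constructions are mutually inverse on the appropriate subset of the target.

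The main step is then to verify that the nondegeneracy condition on the extended sequence cuts out a Zariski open locus in $(\PP^2)^n \times \PGL_3$. Because $M$ is a projective transformation and hence preserves collinearity, the nondegeneracy condition at the 5-tuple starting at index $i$ is equivalent to the one starting at $i + n$; consequently it suffices to impose the condition only at the finitely many 5-tuples starting at indices $i = 1, 2, \ldots, n$. Each such condition is a finite conjunction of ``three points not collinear'' statements, each given by the non-vanishing of a $3 \times 3$ determinant. For triples involving indices larger than $n$, I substitute $v_{n+j} = M v_j$, turning each determinant into a polynomial in the projective coordinates of $v_1, \ldots, v_n$ and the matrix entries of $M$. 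The resulting intersection of finitely many open conditions is Zariski open in $(\PP^2)^n \times \PGL_3$. The main obstacle I anticipate is the bookkeeping in the last step: namely, checking that the translation-by-$M$ equivariance really collapses the a priori infinite family of nondegeneracy conditions into the single finite block coming from starting indices $1, \ldots, n$, and being careful that the substitution $v_{n+j} = M v_j$ does not introduce spurious conditions on $M$ beyond $M \in \PGL_3$.
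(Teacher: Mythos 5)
Your proof is correct and follows the same overall approach as the paper's. The paper's proof is terser: it observes that for generic $(v_1, \ldots, v_n, M)$, the points $v_1, \ldots, v_n, Mv_1, \ldots, Mv_4$ are in general linear position, and since $M$ preserves general position the bi-infinite sequence inherits the nondegeneracy condition. Your version is somewhat more careful than the paper in one respect: you explicitly use the $M$-equivariance of the nondegeneracy conditions to collapse the infinite family of 5-tuple conditions to the finite block at starting indices $1, \ldots, n$, and you then express these as non-vanishing of $3\times 3$ determinants, which pins down the image as precisely a Zariski open subset. The paper instead imposes a slightly stronger condition (full general linear position, not allowing even the permitted collinearity of $v_i, v_{i+2}, v_{i+4}$), which only exhibits a dense open subset of the image and leaves the identification of the image as an open set implicit. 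Your worry about ``spurious conditions on $M$'' is a non-issue: the determinants built from $v_j$ and $Mv_j$ are homogeneous in local lifts of both the points and the matrix, so their non-vanishing loci are well-defined open subsets of $(\PP^2)^n \times \PGL_3$ with no additional constraint on $M$.
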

\begin{proof}
We need only show that this map is invertible on a generic subset of the variety $(\PP^2)^n \times \PGL_3$. This is straightforward: for a generic choice of $(v_1, \hdots, v_n, M)$, the points $v_1, \hdots, v_n, Mv_1, Mv_2, Mv_3, Mv_4$ are in general linear position, and thus so are any consecutive $5$ points in the sequence
$$(\hdots, v_1, \hdots, v_n, Mv_1, \hdots, Mv_n, M^2 v_1, \hdots, M^2 v_n, \hdots).$$
This sequence is a twisted $n$-gon that corresponds to $(v_1, \hdots, v_n, M)$, since it satisfies the nondegeneracy condition of Definition \ref{def_twisted_ngon}.
\end{proof}
}
The pentagram map is invariant under projective transformations. So, we study the induced map on the moduli space of projective equivalence classes of twisted $n$-gons. If we view $\TwParamNice$ as an open subvariety of $(\PP^2)^n \times \PGL_3$, the $\PGL_3$-action on $\TwParamNice$ is given by
\begin{equation}
\label{eqn:actionofPGLonUn}
A \cdot (v_1, \dots, v_n, M) = (Av_1, \dots, Av_n, AMA^{-1}).
\end{equation}

\begin{defn} \label{def_twisted_moduli}
The \emph{moduli space of twisted $n$-gons}, denoted $\Tw$, is the quotient variety $\TwParamNice / \PGL_3$ for the action described by~\eqref{eqn:actionofPGLonUn}.
\end{defn}

Definition \ref{def_twisted_moduli} asserts the existence of a quotient in the category of varieties, but in general, such quotients may not exist. The main theorem of this section, Theorem \ref{thm_moduli}, equips $\Tw$ with a variety structure. We explicitly describe the coordinate ring of the moduli space $\Tw$, and we check that the quotient map to $\Tw$ is geometric in the sense of geometric invariant theory.

We briefly recall the basic notions and give a more detailed review in Section \ref{subsect_git_background}.

Informally, given a variety $V$ and group $G$, a \emph{categorical quotient}, denoted $V \GITQ G$, is a variety $V'$ and a map $V \to V'$ which has the typical categorical properties of a quotient. Categorical quotients do not always exist, and even when they do, they may not reflect the geometry of the orbits well. A categorical quotient $V' = V \GITQ G$ is called a \emph{geometric quotient} if the points of $V'$ classify $G$-orbits in $V$. When a geometric quotient exists, the notions of orbit space and categorical quotient are essentially the same, so we can speak of a variety structure on $V / G$.

\new{We now recall the definition of Schwartz's corner invariants, which have played an essential role in the study of the pentagram map \cite{MR2434454}. These functions were known to define a full set of coordinates on $\Tw$ as a manifold over $\R$; we extend this to show that they generate the coordinate ring of $\Tw$ as a variety over $k$.}

\begin{defn}
We define the cross-ratio of four points $v_1, v_2, v_3, v_4$ in $\PP^1$ with $v_1 \neq v_3$ and $v_2 \neq v_4$ in a slightly non-standard way, as follows. Choose any affine coordinate such that, computed in that coordinate, we have $v_1, v_2, v_3, v_4 \not\in \{ 0, \infty \}$. Then the cross-ratio is defined by the formula
$$[v_1, v_2, v_3, v_4] = \frac{(v_1 - v_2)(v_3 - v_4)}{(v_1 - v_3)(v_2 - v_4)}.$$
One can check that the result is independent of the choice of affine coordinate. (Frequently one sees the reciprocal of this quantity defined as the cross-ratio.)
\end{defn}

Given a twisted $n$-gon $v=(v_i)$, its \emph{left and right corner invariants}, denoted $x_i, y_i$, are defined by
$$x_i = [v_{i - 2}, v_{i - 1}, \overline{v_i v_{i+1}} \cap \overline{v_{i - 2} v_{i - 1}}, \overline{v_{i+1} v_{i+2}} \cap \overline{v_{i-2} v_{i-1}}],$$
$$y_i = [\overline{v_{i+1}v_{i+2}} \cap \overline{v_{i-2} v_{i-1}}, \overline{ v_{i-1} v_i} \cap \overline{ v_{i+1} v_{i+2}},v_{i+1}, v_{i+2}].$$
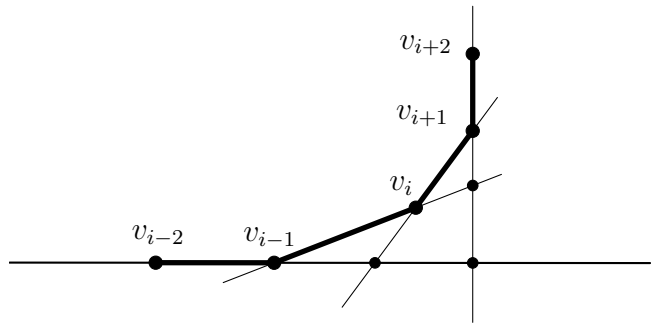
\begin{figure}[h]
\begin{center}
\begin{tikzpicture}[line cap=round,line join=round,x=2cm,y=2cm]
\clip(-8.078937962480818,0.5197631345776252) rectangle (-3.8210063920154718,3.5020930758952074);
\draw [line width=2pt] (-7.107359808810209,1.6306020673469557)-- (-6.32,1.63);
\draw [line width=2pt] (-6.32,1.63)-- (-5.378359719798769,1.9964041665062287);
\draw [line width=2pt] (-5.378359719798769,1.9964041665062287)-- (-5,2.5079);
\draw [line width=2pt] (-5,2.5079)-- (-5,3.01963);
\draw [line width=0.8pt,domain=-8.078937962480818:-3.8210063920154718] plot(\x,{(--1.2795914227278793-0.0006020673469557813*\x)/0.7873598088102085});
\draw [line width=0.4pt] (-5.868094817254599,1.3343425125924593)-- (-4.835794176183923,2.7298860897507966);
\draw [line width=0.4pt] (-5,3.3365597589392766)-- (-5,1.2335010635981716);
\draw [line width=0.4pt] (-6.658647430388726,1.4982279931307303)-- (-4.80885185544657,2.218006893942777);
\draw [fill=black] (-7.107359808810209,1.6306020673469557) circle (2.5pt);
\draw[color=black] (-7.090013354517834,1.8109208811540152) node {$v_{i-2}$};
\draw [fill=black] (-6.32,1.63) circle (2.5pt);
\draw[color=black] (-6.347764322923144,1.7975870662151885) node {$v_{i-1}$};
\draw [fill=black] (-5.378359719798769,1.9964041665062287) circle (2.5pt);
\draw[color=black] (-5.47439944442999,2.1442662546246836) node {$v_{i}$};
\draw [fill=black] (-5,2.5079) circle (2.5pt);
\draw[color=black] (-5.334387163489908,2.606505172504011) node {$v_{i+1}$};
\draw [fill=black] (-5,3.01963) circle (2.5pt);
\draw[color=black] (-5.312164138591863,3.0776333003425558) node {$v_{i+2}$};
\draw [fill=black] (-5.649772462856632,1.6294874997293114) circle (2pt);
\draw [fill=black] (-5,1.6289906407603119) circle (2pt);
\draw[fill=black] (-5,2.144) circle (2pt);
\end{tikzpicture}
\caption{\new{The left corner invariant $x_{i}$ is defined as the cross-ratio of the four points on the pictured horizontal line, and the right corner invariant $y_i$ is defined as the cross-ratio of the four points on the pictured vertical line. We can informally think of the corner invariants as providing coordinates for $v_i$ in the axes determined by $v_{i-2}$, $v_{i-1}$, $v_{i+1}$, and $v_{i+2}$.}}
\end{center}
\end{figure}

These quantities are $\PGL_3$-invariant functions of $v$. We consider the morphism
$$\TwParamNice \to (\PP^1 \smallsetminus \{0, 1, \infty\})^{2n},$$
$$v \mapsto (x_1(v), \dots, x_n(v), y_1(v), \dots, y_n(v)).$$
We now present the main theorem of this section, which says that this morphism defines a geometric quotient (i.e. a variety with nice quotient properties).

\begin{thm} \label{thm_moduli}
The geometric quotient $\Tw = \TwParamNice / \PGL_3$ exists, and there is an isomorphism of varieties
$$\Tw \xrightarrow{\sim} (\PP^1 \smallsetminus \{0, 1, \infty \})^{2n},$$
$$v \mapsto (x_1(v), \dots, x_n(v), y_1(v), \dots, y_n(v)).$$
\end{thm}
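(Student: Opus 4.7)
The plan is to produce an explicit inverse to the corner invariant map using a slice for the $\PGL_3$-action. The nondegeneracy condition of Definition~\ref{def_twisted_ngon} forces $(v_1,v_2,v_3,v_4)$ to be a projective frame (no three collinear), and $\PGL_3$ acts simply transitively on the variety of projective frames in $\PP^2$. Consequently the multiplication map
$$\PGL_3 \times \Sigma \xrightarrow{\sim} \TwParamNice, \qquad (A,v) \mapsto A \cdot v,$$
is a $\PGL_3$-equivariant isomorphism, where $\Sigma \subset \TwParamNice$ is the slice of twisted $n$-gons with $(v_1,v_2,v_3,v_4)$ equal to the standard frame $e = ((1{:}0{:}0),(0{:}1{:}0),(0{:}0{:}1),(1{:}1{:}1))$. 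It therefore suffices to establish an isomorphism of varieties $\Sigma \xrightarrow{\sim} (\PP^1 \smallsetminus \{0,1,\infty\})^{2n}$ via the corner invariants; the geometric quotient property is then automatic, since $\PGL_3$ acts freely on the first factor of $\PGL_3 \times \Sigma$ and trivially on the second.

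For the isomorphism $\Sigma \cong (\PP^1 \smallsetminus \{0,1,\infty\})^{2n}$, the key input is a local reconstruction lemma: given four points $v_{i-2}, v_{i-1}, v_i, v_{i+1}$ in general position in $\PP^2$ and a pair $(x_i, y_i) \in (\PP^1 \smallsetminus \{0,1,\infty\})^2$, there is a unique $v_{i+2}$ realizing the prescribed values, and $v_{i+2}$ is a rational function of the inputs. Indeed, $x_i$ is a cross-ratio on the line $\overline{v_{i-2}v_{i-1}}$ whose first three entries are determined by the given points; inverting it fixes the fourth entry, and hence the line $\overline{v_{i+1}v_{i+2}}$. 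Then $y_i$ is a cross-ratio on that line whose first three entries are also determined, and inverting it fixes $v_{i+2}$. Starting from $(v_1,v_2,v_3,v_4)=e$ and iterating this step for $i = 3, 4, \hdots, n+2$ (with the evident periodicity $x_{n+j}=x_j$, $y_{n+j}=y_j$) produces $v_5, \hdots, v_{n+4}$; the monodromy $M$ is then the unique element of $\PGL_3$ taking $(v_1,v_2,v_3,v_4)$ to $(v_{n+1},v_{n+2},v_{n+3},v_{n+4})$. Each step is a morphism of varieties, yielding a two-sided inverse to the corner invariant map.

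The main obstacle is the dictionary between the two nondegeneracy conditions: one needs to verify that the corner invariants of $v \in \TwParamNice$ avoid $\{0,1,\infty\}$ if and only if $v$ satisfies Definition~\ref{def_twisted_ngon}, so that both the forward map and the reconstructed inverse actually land where claimed. This amounts to matching each potential degeneration of a cross-ratio---coincidence of two of its four entries---with a specific forbidden collinearity among the $v_j$, and conversely. The case analysis is finite and elementary, but it is the place where the precise form of the nondegeneracy condition, including the \emph{allowed} collinearity among $v_i, v_{i+2}, v_{i+4}$, is crucially used; everything else in the proof is a formal consequence of the product decomposition $\TwParamNice \cong \PGL_3 \times (\PP^1 \smallsetminus \{0,1,\infty\})^{2n}$.
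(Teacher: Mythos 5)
Your proposal is correct, but it takes a genuinely different route from the paper's. You use a slice argument: since the nondegeneracy condition forces $(v_1,v_2,v_3,v_4)$ to be a projective frame and $\PGL_3$ acts simply transitively on frames, the action on $\TwParamNice$ is free and $\TwParamNice \cong \PGL_3 \times \Sigma$ as a $\PGL_3$-variety, so the geometric quotient is immediately identified with the slice $\Sigma$; the explicit isomorphism then comes from iterating the local reconstruction lemma around the polygon and reading off the monodromy. The paper instead proceeds through GIT: it introduces the spaces $\LeftSpace$ and $\RightSpace$ of lifted polygons and ``difference operators,'' applies Nagata's theorem to get a categorical quotient of $\RightSpace$, checks explicitly that invariants separate $\beta$-orbits, and transports the result to $\LeftSpace / \alpha \cong \TwParamNice / \PGL_3$ via the dualization maps $\Dualize, \DualizeBack$. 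Your argument is shorter and more self-contained, and it avoids the full GIT apparatus, which the freeness of the action makes unnecessary for existence of the quotient; what the paper's longer detour buys is that $\RightSpace$, $\Dualize$, and the resulting formulas $x'_i, y'_i$ in terms of $a_i, b_i, c_i, d_i$ are exactly the tools reused in Sections 4 and 5 to derive the pentagram formulas and the Lax representation, and the invariant-theoretic computation ``explains the algebraic origin'' of the corner invariants rather than taking them as given. One point you flag but do not carry out is the dictionary between cross-ratio degenerations ($x_i, y_i \in \{0,1,\infty\}$) and forbidden collinearities; this is indeed the crux of both directions of the bijection and needs to be written out, including the observation that the permitted collinearity $v_{i-2}, v_i, v_{i+2}$ does not force any of the six coincidences in either cross-ratio, and conversely that each of the six coincidences in $x_i$ or $y_i$ forces a forbidden collinearity in the window $(v_{i-2},\dots,v_{i+2})$; but the analysis is finite and, once done, closes the argument. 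You should also make explicit the $\PGL_3$-equivariance of the single reconstruction step, which is what guarantees that iterating for $i = 3, \dots, n+2$ with $n$-periodic inputs yields a genuine twisted polygon (i.e.\ that $v_{n+j} = M v_j$ holds beyond $j \le 4$).
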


We prove a more detailed version of this theorem as Proposition \ref{prop_geoquoladder}, later in this section. 
Theorem \ref{thm_moduli} implies that any algebraic, projectively invariant condition on twisted $n$-gons can be expressed in terms of the corner invariants. For instance, one can show that the points $v_{i - 2}, v_i, v_{i+2}$ are collinear if and only if $x_i y_i = 1$.

The real part of $\Tw$ was studied in \cite{MR2679816}, where it was shown that the corner invariants provide a diffeomorphism to $\R^{2n}$. To do something similar algebraically, we have to take the quotient of a variety by an infinite group, using GIT.

The proof of Theorem \ref{thm_moduli} is complicated. The motivating idea is as follows. In any sequence of five points in the plane, we can describe coordinates for the third point in terms of the first, second, fourth, and fifth points. Take consecutive subsequences of length 5 in the bi-infinite sequence defining a twisted $n$-gon. Because of the monodromy, the resulting coordinates are $n$-periodic, thus giving $2n$ functions that coordinatize the space of twisted $n$-gons.

\begin{remark}
The existence of the geometric quotient $\Tw$ follows immediately from the main results of \cite{weinreich2021git}. However, that proof does not produce the explicit coordinatization by corner invariants, and we use that coordinatization to study the pentagram map.
\end{remark}

\begin{remark}
Another coordinate system, the so-called \emph{$ab$-coordinates}, has also been used widely in the study of the pentagram map, for instance in \cite{MR2679816}. These work well over $\R$, but their definition uses the isomorphism $\SL_3(\R) \cong \PGL_3(\R)$, and they are not well-defined over an arbitrary base field. Further, our proof of Theorem \ref{thm_moduli} explains the algebraic origin of the corner invariants.
\end{remark}

\subsection{Background on geometric invariant theory} \label{subsect_git_background}

We cite some standard theorems in geometric invariant theory. For a development of these ideas, see \cite[Chapter 6]{MR2004511}.

\begin{defn}
Suppose that a group $G$ acts algebraically on a variety $V$, with action $\alpha: G \times V \to V$. A \emph{categorical quotient} is a variety $V'$ and a $G$-invariant morphism $\chi: V \to V'$, such that for every variety $V''$, every $G$-invariant morphism $V \to V''$ factors through $\chi$ uniquely. We denote a categorical quotient $V'$ by $V \GITQ G$. The quotient depends on the action, but this notation suppresses it. If we want to be specific about the action, we write $V \GITQ \alpha$. We will also use the notation $V \GITQ G$ and $V \GITQ \alpha$ to refer to the variety without the attached data of $\chi$.

By a universal property argument, if a categorical quotient exists, it is unique up to unique isomorphism. When we write $V \GITQ G$, the argument will always provide a specific construction of the categorical quotient.

A \emph{geometric quotient}, denoted $V / G$, is a categorical quotient $V \GITQ G$ with the following property: if $v_1, v_2 \in V$ have the same image in $V \GITQ G$, then there exists some $g \in G$ such that $g \cdot v_1 = v_2$.
\end{defn}

Categorical quotients do not always exist, much less geometric quotients. But for a large class of groups, the \emph{geometrically reductive groups}, categorical quotients of affine varieties do exist. Any algebraic subgroup of a general linear group $\GL_d$ is geometrically reductive, regardless of base field.

\begin{thm}[Nagata, Mumford]
\label{thm_nagata}
Let $G$ be a geometrically reductive group acting on an affine $k$-variety $V$. Let $\shO(V)$ be the $k$-algebra of global sections. Then the subalgebra $\shO(V)^G$ of $G$-invariant functions is finitely generated over $k$. Let $V' = \Spec \shO(V)^G$. The canonical morphism $\psi: V \to V'$ is a categorical quotient. Thus
$$ V \GITQ G \cong \Spec \shO(V)^G.$$
\end{thm}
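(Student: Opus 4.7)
The theorem naturally breaks into two parts: (a) finite generation of the invariant algebra $\shO(V)^G$ as a $k$-algebra, so that $V' := \Spec \shO(V)^G$ is an honest variety; and (b) verification that the canonical morphism $\psi: V \to V'$ satisfies the universal property of a categorical quotient. Part (a) is the substantive content, namely Nagata's solution to Hilbert's 14th problem for geometrically reductive groups; part (b) is essentially formal once (a) is in hand.

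For part (a), the plan is to reduce to a linear setting and then to exploit geometric reductivity. Since the $G$-action on $\shO(V)$ is locally finite, I would choose a finite-dimensional $G$-stable subspace $W \subset \shO(V)$ that generates $\shO(V)$ as a $k$-algebra; this yields an equivariant closed embedding $V \hookrightarrow \mathbb{A}(W^*)$, so it suffices to treat the case $V = \Spec S$ with $S = k[x_1, \dots, x_n]$ graded and $G$ acting by graded automorphisms. Let $\mathfrak{a} \subset S$ be the ideal generated by the positive-degree invariants, and apply Hilbert's basis theorem to pick finitely many homogeneous $f_1, \dots, f_r \in S^G$ generating $\mathfrak{a}$. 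The key input from geometric reductivity is: if $g \in S^G$ lies in the $S$-ideal $(f_1, \dots, f_r)$, then some power $g^N$ lies in the $S^G$-ideal $(f_1, \dots, f_r) \cdot S^G$. Iterating this shows that $S^G$ is integral over the Noetherian subring $k[f_1, \dots, f_r]$, and then the Artin--Tate lemma delivers finite generation of $S^G$ as a $k$-algebra.

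For part (b), the inclusion $\shO(V)^G \hookrightarrow \shO(V)$ induces $\psi$, which is $G$-invariant by construction. Given a $G$-invariant morphism $\phi: V \to V''$ with $V''$ affine, the pullback $\phi^*$ lands in $\shO(V)^G$, hence factors uniquely through $\shO(V')$, producing the required factorization. For general $V''$, I would cover $V''$ by affines $V''_\alpha$; the preimages $\phi^{-1}(V''_\alpha)$ are $G$-stable, and since $G$-orbits are contained in fibers of $\psi$, they are saturated under $\psi$ and descend to an open cover of $V'$ on which the affine constructions glue to give a global morphism $V' \to V''$. Uniqueness follows from density of the image of $\psi$.

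The main obstacle is the technical step in part (a) linking containment in an $S$-ideal to containment (after raising to a power) in the corresponding $S^G$-ideal. In characteristic $0$, linear reductivity provides a Reynolds operator — an equivariant projection $S \twoheadrightarrow S^G$ — which makes this step immediate with $N = 1$; but in positive characteristic, only geometric reductivity is available, and passing to a genuine power is unavoidable. Fortunately, Haboush's theorem guarantees that reductive groups (including $\PGL_3$, the group of interest here) are geometrically reductive over any algebraically closed field, so the machinery applies in the generality we need.
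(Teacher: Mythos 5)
This statement is a classical cited result in the paper (the text immediately after it attributes finite generation to Nagata, the categorical-quotient claim to Mumford, and refers the reader to Dolgachev's book for a proof), so there is no in-paper proof to compare against. Your part (b) — the verification that $\psi\colon V\to V'$ is a categorical quotient once finite generation is known — is correct and standard, including the gluing argument for non-affine targets. Your reduction at the start of part (a), using local finiteness of the $G$-action to embed $V$ equivariantly into a linear $G$-representation, is also the right opening move. The invocation of Haboush's theorem to cover reductive groups such as $\PGL_3$ in positive characteristic is correct and is exactly the relevance of geometric reductivity for this paper.

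There are, however, two genuine gaps in part (a). First, the reduction ``it suffices to treat $S = k[x_1,\dots,x_n]$'' is not free: writing $\shO(V) = k[x_1,\dots,x_n]/I$ for a $G$-stable ideal $I$, the ring $(\,k[x]/I\,)^G$ is not in general a quotient of $k[x]^G$ — taking invariants is not right exact — so finite generation of $k[x]^G$ does not formally transfer. Passing from the polynomial ring to an arbitrary finitely generated quotient already requires the power-surjectivity consequence of geometric reductivity (every $G$-invariant of $k[x]/I$ has a power that lifts to a $G$-invariant of $k[x]$). Second, and more seriously, the step ``iterating $g^N\in(f_1,\dots,f_r)\cdot S^G$ shows $S^G$ is integral over $k[f_1,\dots,f_r]$'' does not work: the relation $g^N=\sum h_if_i$ has coefficients $h_i\in S^G$, not in $k[f_1,\dots,f_r]$, so it is not a monic polynomial equation for $g$ over $k[f_1,\dots,f_r]$, and integrality does not follow by iteration. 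Nagata's actual proof in positive characteristic is considerably more delicate — it proceeds by Noetherian induction on $G$-stable ideals of $S$ (choose a maximal $G$-stable ideal $I$ for which $(S/I)^G$ fails to be finitely generated and derive a contradiction using power surjectivity), not via integrality plus Artin--Tate. In characteristic zero, with a Reynolds operator in hand, your degree-by-degree argument does close up directly and gives $S^G=k[f_1,\dots,f_r]$; the Artin--Tate route you describe is not how the general case is handled.
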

The first claim, about finite generation, is due to Nagata \cite{MR179268}. The scond claim is due to Mumford \cite{MR1304906}. For a proof, see \cite[Theorem 6.1]{MR2004511}.

We use $\Gm$ to denote the multiplicative group scheme over $k$.

\begin{example}
Consider $\Gm$ acting on $\A^1$ by $t \cdot v = tv$. The categorical quotient $\A^1 \GITQ \Gm$ exists by Theorem \ref{thm_nagata}, and $\shO(\A^1)^{\Gm} = k$, so the categorical quotient is a point. Since $0$ and $1$ are in different orbits, but have the same image, the quotient is not geometric. 
\end{example}

\subsection{The dual space} \label{sub_thedualspace}

In this section, we prove Theorem \ref{thm_moduli}. We describe a map which takes a twisted $n$-gon to a kind of dual data: the list of relations satisfied by consecutive sets of four points. The objects in the proof will be used again to derive formulas for the pentagram map in Section \ref{sect_formulas}.

We introduce two spaces $\LeftSpace$ and $\RightSpace$, along with actions $\alpha$ and $\beta$, which are closely related to the action of $\PGL_3$ on $\TwParamNice$. The existence of the geometric quotient $\RightSpace / \beta$ is easier to establish, and we use its structure to show existence of the geometric quotient $\LeftSpace / \alpha$, which in turn gives us existence of $\TwParamNice / \PGL_3$ and the explicit description. The relationships between the various objects are depicted in the following diagram.

\begin{center}
\begin{tikzcd}
& (\PP^2)^n \times \PGL_3 & (\A^{3})^n \times \GL_3 & (\A^4)^n \\
& \TwParamNice \arrow[u, hook] \arrow[d, twoheadrightarrow] & \LeftSpace \arrow[u, hook] \arrow[d, twoheadrightarrow, "\chi"] \arrow[r, "\Dualize", shift left] \arrow[l, twoheadrightarrow, "\pi"] & \RightSpace \arrow[u, hook] \arrow[d, twoheadrightarrow, "\chi'"] \arrow[l, "\DualizeBack", shift left] \\
\Tw \arrow[r, equals] & \TwParamNice / \PGL_3 \arrow[r, leftrightarrow, "\sim"] & \LeftSpace / \acLeft \arrow[r, hookrightarrow] & \RightSpace / \beta \arrow[r, leftrightarrow, "\sim"] & \Gm^{2n}
\end{tikzcd}
\end{center}

We now define
$$\LeftSpace \subset (\A^3 \smallsetminus \{0\})^n \times \GL_3.$$
For all $1 \leq i \leq n$, let $v_i$ be the projection to the $i$-th copy of $\A^3$. Let $M$ be the $\GL_3$-coordinate. For all $i > n$, define $v_{i} = Mv_{i-n}$. Let $\LeftSpace$ be the open subset defined by the condition that, for all $i$ with $1 \leq i \leq n$, the five consecutive vectors $v_i, v_{i+1}, v_{i+2}, v_{i+3}, v_{i + 4}$ are nonzero, and no 3 of them are coplanar, except possibly $v_i, v_{i + 2}, v_{i + 4}$. Observe that $\LeftSpace$ consists of the elements of $(\A^3 \smallsetminus \{0\})^n \times \GL_3$ which are sent by projectivization to $\TwParamNice$. Let $\pi \colon \LeftSpace \to \TwParamNice$ denote projectivization.

We embed $\LeftSpace$ in the space of $3 \times (n + 3)$ matrices via the map
\[
\begin{bmatrix}
\mid & \mid & \mid & & \mid & \mid & \mid & \mid & \mid & \mid \\
v_1 & v_2 & v_3 & \hdots & v_{n-2} & v_{n-1} & v_n & Mv_1 & Mv_2 & Mv_3 \\
\mid & \mid & \mid & & \mid & \mid & \mid & \mid & \mid & \mid
\end{bmatrix}.
\]
Each set of four vectors in $\A^3$ satisfies some nontrivial relation. If no 3 of the vectors are coplanar, then each coefficient in the relation is nonzero. Thus, given an element $\theta \in \LeftSpace$, there exist values $a_i, b_i, c_i, d_i \in k^*$, such that $\theta$ is annihilated on the right by the $(3 + n) \times n$ matrix
\begingroup
\small
\[
\begin{bmatrix}
a_1 & \\
b_1 & a_2 \\
c_1 & b_2 \\
d_1 & c_2 \\
    & d_2 \\
& & & \ddots \\
& & & & a_{n-1} & \\
& & & & b_{n-1} & a_n \\
& & & & c_{n-1} & b_n \\
& & & & d_{n-1} & c_n \\
& & & &         & d_n
\end{bmatrix}.
\]
\endgroup

Let 
$$\RightSpace = (\Gm^4)^n.$$
The variety $\RightSpace$ parametrizes matrices of the above form. Let the coordinates on the $i$-th copy of $\Gm^4$ in $\RightSpace$ be $a_i, b_i, c_i, d_i$. We extend these definitions to be $n$-periodic, by the rule that for all $i > n,$
$$a_{i} = a_{i - n}, \quad b_{i} = b_{i - n}, \quad c_{i} = c_{i - n}, \quad d_{i} = d_{i - n}.$$
We define $\Dualize \colon \LeftSpace \to \RightSpace$ by
\[
\begin{aligned}
a_i &= \det \begin{bmatrix} \mid & \mid & \mid \\ v_{i+1} & v_{i+1} & v_{i+3} \\ \mid & \mid & \mid \\ \end{bmatrix},
\qquad
&b_i &= -\det\begin{bmatrix} \mid & \mid & \mid \\ v_{i} & v_{i+2} & v_{i+3} \\ \mid & \mid & \mid \\ \end{bmatrix}, \\
c_i &= \det \begin{bmatrix} \mid & \mid & \mid \\ v_{i} & v_{i+1} & v_{i+3} \\ \mid & \mid & \mid \\ \end{bmatrix},
\qquad
&d_i &= -\det\begin{bmatrix} \mid & \mid & \mid \\ v_{i} & v_{i+1} & v_{i+2} \\ \mid & \mid & \mid \\ \end{bmatrix}. \\
\end{aligned}
\]
Set
$$G = \Gm^{n+1} \times \GL_3.$$
We set notation for the coordinate on each factor.
\begin{itemize}
    \item For each $i$ where $1 \leq i \leq n$, let $\eta_i$ be a coordinate on the $i$-th copy of $\Gm$.
    \item Let $\xi$ be a coordinate on the $(n+1)$-th copy of $\Gm$.
    \item Let $A$ be the coordinate on the $\GL_3$ factor.
\end{itemize}
We now define an action
\begin{equation} \label{eq_left_action_def}
\alpha \colon G \times \LeftSpace \to \LeftSpace.
\end{equation}
\begin{itemize}
    \item For each $i$ in $1 \leq i \leq n$, the $\eta_i$ coordinate scales $v_i$.
    \item The $\xi$ coordinate scales the $\GL_3$ factor of $\LeftSpace$.
    \item The $A$ coordinate acts by
\end{itemize}
$$A \cdot (v_1, \dots, v_n, M) = (Av_1, \dots , Av_n, AMA^{-1}).$$

After writing an element of $\LeftSpace$ in matrix form, the scalings correspond to coordinatewise multiplication by various matrices:
\begin{itemize}
    \item For $i = 1, 2, 3$, the $\eta_i$ coordinate simultaneously scales columns $i$ and $n+i$. For each $i$ where $4 \leq i \leq n$, the $\eta_i$ coordinate only scales column $i$.
    \item The $\xi$ coordinate scales columns $n+1$, $n+2$, and $n+3$.
    \item The $A$ coordinate acts by change of basis.
\end{itemize}

We now define an action
$$\acRight: \Gm^{2n+1} \times \RightSpace \to \RightSpace.$$
We use the following coordinates on $\Gm^{2n+1}$.
\begin{itemize}
    \item For each $i$ where $1 \leq i \leq n$, let $\kappa_i$ be a coordinate on the $i$-th copy of $\Gm$.
    \item For each $i$ where $1 \leq i \leq n$, let $\rho_{i}$ be a coordinate on the $(n+i)$-th copy of $\Gm$.
    \item Let $\epsilon$ be a coordinate on the $(2n + 1)$-th copy of $\Gm$.
\end{itemize}
Viewing $\RightSpace$ as a space of matrices, the action of each factor is by a coordinatewise multiplication.
\begin{itemize}
    \item For each $i$ in $1 \leq i \leq n$, the $\kappa_i$ coordinate acts by scaling column $i$.
    \item For each $i$ in $1 \leq i \leq 3$, the $\rho_i$ coordinate acts by simultaneously scaling rows $i$ and $i + n$. For $4 \leq i \leq n$, the $\rho_i$ coordinate acts by scaling row $i$.
    \item The $\epsilon$ coordinate acts by a coordinatewise multiplication by
    \[
\begin{bmatrix}
1 & \\
1 & 1 \\
1 & 1 \\
1 & 1 \\
  & 1 \\
  &   & \ddots \\
& & & 1 & & & \\
& & & 1 & \epsilon &  & \\
& & & 1 & \epsilon & \epsilon & \\
& & & 1 & \epsilon & \epsilon & \epsilon \\
& & &        & 1 & 1 & 1 \\
& & &        &         & 1 & 1 \\
& & &        &         &         & 1
\end{bmatrix}.
\]
\end{itemize}

\new{Izosimov found formulas for the corner invariants of a twisted polygon in terms of $a_i, b_i, c_i, d_i$ \cite[Proposition 2.10]{MR4460093}. Without referring to corner invariants, we prove that those expressions, denoted $x'_i, y'_i$, generate the ring of invariants of $\beta$. We show later, in Lemma \ref{lemma_corner_invariants}, that these expressions agree with the corner invariants.}

\begin{prop} \label{prop_geoquo}
The geometric quotient $\RightSpace / \acRight$ exists. It is given explicitly by
$$\RightSpace / \acRight = \Spec k[{(x'_i)}^{\pm 1}, {(y'_i)}^{\pm 1}] \cong \Gm^{2n}, $$
with the natural projection
$$\chi' \colon \RightSpace \to \RightSpace / \acRight$$
where for each $i = 1, \hdots, n,$
    $$ x'_i \colonequals a_{i-1} c_{i - 2} {b_{i-1}}^{-1} {b_{i-2}}^{-1},$$
    $$ y'_i \colonequals d_{i-2} b_{i-1} {c_{i-2}}^{-1} {c_{i-1}}^{-1}.$$
\end{prop}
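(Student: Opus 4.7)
Since $\RightSpace = \Gm^{4n}$ is itself a torus and $\beta$ acts through characters, the action factors through a homomorphism of tori $\phi \colon \Gm^{2n+1} \to \Gm^{4n}$. The standard theory of subtorus actions on a torus then supplies several facts for free: every orbit is a closed coset of the image $\phi(\Gm^{2n+1})$, the categorical quotient furnished by Theorem~\ref{thm_nagata} is automatically geometric, and the ring of invariants is the group algebra of the lattice $L \colonequals \ker\bigl(\phi^*\colon \Z^{4n} \to \Z^{2n+1}\bigr)$ of characters of the quotient torus. So the plan reduces to identifying $L$ explicitly with the sublattice spanned by the exponent vectors of $x'_i$ and $y'_i$.

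First, I would verify that each $x'_i, y'_i$ is $\beta$-invariant by a direct weight computation. Reading off the defining action gives the weights $a_i \mapsto \kappa_i + \rho_i$, $b_i \mapsto \kappa_i + \rho_{i+1}$, $c_i \mapsto \kappa_i + \rho_{i+2}$, $d_i \mapsto \kappa_i + \rho_{i+3}$ (with indices reduced modulo $n$), together with an additional $\epsilon$-contribution on the six ``boundary'' coordinates $a_{n-2}, a_{n-1}, a_n, b_{n-2}, b_{n-1}, c_{n-2}$ prescribed by the $\epsilon$-matrix. The $\kappa$- and $\rho$-weights in $x'_i = a_{i-1} c_{i-2} b_{i-1}^{-1} b_{i-2}^{-1}$ and $y'_i = d_{i-2} b_{i-1} c_{i-2}^{-1} c_{i-1}^{-1}$ cancel by telescoping, and one checks case-by-case for the few indices $i$ where an invariant actually touches a boundary coordinate (essentially $x'_n$, $x'_1$, $y'_{n-1}$, $y'_n$ and their neighbors) that the $\epsilon$-powers in numerator and denominator also cancel.

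Next, I would build an explicit section $s \colon \Gm^{2n} \to \RightSpace$ of the proposed quotient morphism $\chi'$. Setting $a_i = b_i = 1$ for all $i$ and solving the invariant formulas yields $c_i = x'_{i+2}$ and $d_i = y'_{i+2} x'_{i+2} x'_{i+3}$, and a direct substitution confirms $\chi' \circ s = \mathrm{id}$. This forces the coordinate-ring map $(\chi')^*$ to be split injective, so the $x'_i, y'_i$ are algebraically independent Laurent polynomial generators of their image in $\shO(\RightSpace)^\beta$. A short rank count then closes the argument: the one-parameter subgroup $(\kappa_i, \rho_j, \epsilon) = (t, t^{-1}, 1)$ manifestly acts trivially, and a routine elimination in the $4n$ scaling equations (using first the bulk coordinates, which carry no $\epsilon$-contribution, to collapse the $t_i$ and $s_j$ to a single scalar, then the boundary equations to force $u = 1$) shows this is the whole kernel of $\phi$. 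Hence $L$ has rank $4n - (2n+1-1) = 2n$, matching the number of generators $x'_i, y'_i$ already produced, so they generate $L$ on the nose rather than a finite-index sublattice.

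The main obstacle I anticipate is the bookkeeping of the $\epsilon$-action: one has to correctly identify which six coordinates it scales, ensure the cancellations work for each of the finitely many boundary invariants, and then carry out the elimination showing $\dim \ker \phi = 1$ in spite of the nonuniform $\epsilon$-contribution. Once that is done, the torus-theoretic machinery delivers both the Laurent polynomial description of the invariant ring and the geometricity of $\chi'$ directly.
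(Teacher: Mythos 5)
Your proposal is correct, and it takes a genuinely different route from the paper. The paper's proof has three hands-on steps: it first invokes Nagata's theorem to get a categorical quotient, then computes the invariant ring by a direct elimination argument on monomials (divide out by $x'_i$ and $y'_i$ to kill the $a_i$- and $d_i$-degrees, observe what remains must be a power of $\mu_0 = \prod b_i / \prod c_i$, and kill that via the $\epsilon$-grading), and finally proves geometricity by normalizing orbit representatives using the scalings $\kappa_i, \rho_i, \epsilon$. You instead exploit the fact that $\RightSpace$ is a torus and $\beta$ acts by characters, so the action factors through a torus homomorphism $\phi \colon \Gm^{2n+1} \to \Gm^{4n}$; this instantly delivers geometricity (quotient of a torus by the image of a subtorus) and reduces the generation statement to identifying $\ker(\phi^*)$. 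What your route buys: the orbit-separation step disappears, and the generation statement becomes a clean lattice argument. What it costs: one must be slightly careful that the sublattice spanned by the $x'_i, y'_i$ is not merely a finite-index sublattice of $\ker(\phi^*)$; you handle this correctly with the section $s$, since split injectivity of $(\chi')^*$ makes the image a direct summand of $\Z^{4n}$ and hence saturated, which together with the rank count $4n - (2n+1) + 1 = 2n$ forces equality. (I would phrase that final step as a saturation argument rather than ``matching the number of generators,'' since rank equality alone does not give lattice equality, but the split injectivity you already established does.) Your accounting of the $\epsilon$-weights (the six coordinates $a_{n-2}, a_{n-1}, a_n, b_{n-2}, b_{n-1}, c_{n-2}$) and your explicit section $a_i = b_i = 1$, $c_j = x'_{j+2}$, $d_j = y'_{j+2} x'_{j+2} x'_{j+3}$ both check out, and the section observation is a nice shortcut not in the paper.
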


\begin{proof}
There are three parts to the proof: constructing a categorical quotient $\RightSpace \GITQ \acRight$, calculating its coordinate ring, and showing that the quotient is geometric.
\par(1)\enspace 
To see that $\RightSpace \GITQ \acRight$ exists, observe that $\RightSpace$ is an affine variety and that $(\Gm)^{2n + 1} \times \GL_3$ is reductive. By Nagata's Theorem (Theorem \ref{thm_nagata}), the spectrum of the ring of invariants $k[\RightSpace]^\acRight$ is a categorical quotient.
\par(2)\enspace 
Let $x_i, y_i$ be defined in the statement of the theorem. We claim that
    $$ k[\RightSpace]^\acRight = k[{(x'_i)}^{\pm 1}, {(y'_i)}^{\pm 1}].$$
    The $x'_i, y'_i$ are invariants by inspection, so we just have to show that they generate the ring $k[\RightSpace]^\acRight$.
    
    Each of the $2n + 1$ actions of $\Gm$ puts a grading on $k[\RightSpace]$. Then $k[\RightSpace]^\acRight$ is the intersection of the 0-graded part for each grading. Further, the invariant ring is generated by monomials in the $a_i, b_i, c_i, d_i$, because the ring of invariants for each grading considered separately is generated by monomials. Now, we argue that every invariant monomial is of the form $\prod_i {(x'_i)}^{e_i} {(y'_i)}^{f_i}$, where $e_i, f_i \in \Z$. If $\mu$ is an invariant monomial, then we can divide by an appropriate power of the $x_i$ and $y_i$ to get an invariant monomial that has $a_i$-degree 0 and $d_i$-degree 0. The resulting invariant is a monomial in just the $b_i, c_i$.  Since each $\kappa_i$-grading is 0, we have $\deg_{c_i} \mu = -\deg_{b_i} \mu$. Since each $\rho_i$-grading is 0, we have $\deg_{b_i} \mu = - \deg_{c_{i-1}} \mu$ for $i > 1$, and $\deg_{b_1} \mu = -\deg_{c_n} \mu$. 
    Write 
    $$\mu_0 = \frac{b_1 b_2 \hdots b_n}{c_1 c_2 \hdots c_n}.$$
    We have shown that $\mu$ is a power of $\mu_0$. And the $\epsilon$-grading gives $\mu_0$ degree 1, so $\mu = 1$.
    
    Further, we claim the spectrum is $\Gm^{2n}$. This is true if the $x'_i, y'_j$ are algebraically independent. This is clear, since a distinct $a_i$ or $d_i$ appears in the definition of each, and these have no relations. 
\par(3)\enspace 
To prove that the quotient is geometric, we must check that invariant functions distinguish between orbits. Suppose $w, w' \in \RightSpace$ satisfy $\chi'(w) = \chi'(w')$. First, by an $\epsilon$-scaling, we can replace $w$, $w'$ by elements such that $\mu_0(w) = \mu_0(w')= 1$. Then, by scaling with $\kappa_1, \rho_1, \hdots, \kappa_2, \rho_2, \hdots, \kappa_{n}$, we can replace $w$ by an element such that
    $$b_1(w) = 1, \quad c_1(w) = 1, \quad b_2(w) = 1, \quad c_2(w) = 1, \quad \hdots, \quad b_n(w) = 1.$$
    We do the same for $w'$. Since the scalings by $\kappa_i$ and $\rho_i$ hold $\mu_0$ invariant, we deduce that $c_n(w) = c_n(w') = 1$ as well. An element of $\RightSpace$ with all $b_i, c_i = 1$ is determined by the values of $x'_i, y'_i$, so $w = w'$.
\end{proof}

\begin{prop} \label{prop_dualizing_orbits_I}
If the points $\theta_1, \theta_2 \in \LeftSpace$ are in the same $\acLeft$-orbit, then $\Dualize(\theta_1), \Dualize(\theta_2)$ are in the same $\acRight$-orbit.
\end{prop}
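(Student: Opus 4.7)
The plan is to reduce the proposition to a computation on the generators of the group $G = \Gm^{n+1} \times \GL_3$ acting on $\LeftSpace$ via $\acLeft$. This group is generated by: the one-parameter subgroups $\eta_i$ scaling $v_i$ for $i = 1, \ldots, n$; the subgroup $\xi$ scaling $M$; and matrices $A \in \GL_3$ acting by change of basis. For each generator I would compute the induced effect on the entries of $R = \Dualize(\theta) \in \RightSpace$---which are $3 \times 3$ determinants among the columns $v_1, \ldots, v_n, Mv_1, Mv_2, Mv_3$ of $\theta$---and exhibit an element of the $(2n+1)$-parameter $\acRight$-action that produces the same effect.

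For change of basis by $A$, every column $v_j$ of $\theta$ becomes $Av_j$ (the identity $(AMA^{-1})(Av_k) = AMv_k$ keeps the wrap-around columns consistent), so every entry of $R$ is multiplied by $\det A$. I would realize this uniform scaling in $\acRight$ by taking $\kappa_1 = \cdots = \kappa_n = \det A$ with all other parameters equal to $1$. For the action by $\xi$, scaling $M \to \xi M$ multiplies only the columns $v_{n+1}, v_{n+2}, v_{n+3}$ of $\theta$ by $\xi$. The entry of $R$ at position $(r, c)$ is a determinant in the vectors $v_j$ for $j \in \{c, c+1, c+2, c+3\} \setminus \{r\}$, so it scales by $\xi^{s(r,c)}$ where $s(r,c) = |(\{c, c+1, c+2, c+3\} \cap \{n+1, n+2, n+3\}) \setminus \{r\}|$. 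I expect this count to match exactly the exponent pattern of the $\epsilon$-matrix, so that $\epsilon = \xi$ with all other parameters equal to $1$ realizes this generator.

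Finally, for $\eta_i$ scaling $v_i$, the entry of $R$ at $(r, c)$ involves $v_i$ if and only if $i \in \{c, c+1, c+2, c+3\} \setminus \{r\}$, that is, $c \in \{i-3, \ldots, i\} \cap \{1, \ldots, n\}$ and $r \neq i$. When $i \in \{1, 2, 3\}$, the vector $v_{n+i} = Mv_i$ is also scaled, so the entries at $(r, c)$ with $c \in \{n+i-3, \ldots, n+i\} \cap \{1, \ldots, n\}$ and $r \neq n+i$ are likewise scaled. I plan to realize this effect in $\acRight$ by setting $\kappa_j = \eta_i$ on each affected column and $\rho_i = \eta_i^{-1}$ on the affected row(s); the fact that $\rho_i$ for $i \leq 3$ simultaneously scales rows $i$ and $n+i$, while $\rho_i$ for $i \geq 4$ scales only row $i$, is designed precisely so that the $\rho_i^{-1}$ factor cancels the $\kappa$-scaling on the entries in the ``excluded'' rows of each affected column. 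The hard part will be the index bookkeeping in the wrap-around cases $i \in \{1, 2, 3\}$, where the set $\{i-3, \ldots, i\}$ truncates and the two distinct scalings of $v_i$ and $v_{n+i}$ interact in the final column set; once these are tracked carefully, the cancellation pattern produces exactly the desired transformation.
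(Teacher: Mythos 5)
Your overall strategy — reduce to the three generator types $\eta_i$, $\xi$, $A$ and realize each induced transformation of $\Dualize(\theta)$ by an explicit element of the $\acRight$-action — is exactly the paper's approach, and your treatments of $A$ and $\eta_i$ agree with the paper. However, your handling of the $\xi$-generator has a genuine error.

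You assert that $s(r,c) = \bigl|(\{c, c+1, c+2, c+3\} \cap \{n+1, n+2, n+3\}) \setminus \{r\}\bigr|$ matches the exponent pattern of the $\epsilon$-matrix. It does not. For column $c = n$, with $r = n, n+1, n+2, n+3$, you get $s = 3, 2, 2, 2$, whereas the $\epsilon$-matrix exponents in column $n$ are $1, 0, 0, 0$. Similarly for column $c = n-1$ you get $s = 2, 2, 1, 1$ versus $\epsilon$-exponents $1, 1, 0, 0$. The discrepancy is a uniform offset of $2$ down column $n$ and a uniform offset of $1$ down column $n-1$; the columns $c \leq n-2$ agree. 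Since column scalings in $\acRight$ are exactly the $\kappa_j$, the fix is to supplement $\epsilon = \xi$ with $\kappa_{n-1} = \xi$ and $\kappa_n = \xi^2$. This is what the paper does. So the $\xi$-case is not realized by $\epsilon$ alone; without the extra $\kappa_{n-1}, \kappa_n$ factors, the claim that $\Dualize(\theta_2)$ lies in the same $\acRight$-orbit would not follow from your computation.
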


\begin{proof}
Recall that we defined action $\alpha \colon G \times \LeftSpace \to \LeftSpace$ just after \eqref{eq_left_action_def} as the product of actions by scaling by $\eta_i$, $1 \leq i \leq n$, scaling by $\zeta$, and conjugation by $A$. Thus we can reduce the claim to the cases that $\theta_2$ is obtained by applying just one of the $\eta_i$, $\zeta$, or $A$ actions.
\begin{itemize}
    \item Suppose that $\theta_2$ is obtained from $\theta_1$ by $\eta_i$. Then $\Dualize(\theta_2)$ is obtained from $\Dualize(\theta_1)$ by $\kappa_{i-3}, \kappa_{i-2}, \kappa_{i-1}, \kappa_i$, indices taken mod $n$, then applying the $\rho_i$-scaling by $(\eta_i)^{-1}$.
    \item Suppose that $\theta_2$ is obtained from $\theta_1$ by applying $A$. Then $\Dualize(\theta_2)$ is obtained from $\Dualize(\theta_1)$ by scaling every entry by $\det(A)$. This can be accomplished by scaling each column individually with the $\kappa_i$.
    \item Suppose that $\theta_2$ is obtained from $\theta_1$ by applying $\xi$. Then $\Dualize(\theta_2)$ is obtained from $\Dualize(\theta_1)$ by applying a coordinatewise multiplication by
\begingroup
\small
\[
\begin{bmatrix}
1 & \\
1 & 1 \\
1 & 1 \\
1 & 1 \\
  & 1 \\
  &   & \ddots \\
& & & 1 & & & \\
& & & 1 & \xi &  & \\
& & & 1 & \xi & \xi^2 & \\
& & & 1 & \xi & \xi^2 & \xi^3 \\
& & &        & 1 & \xi & \xi^2 \\
& & &        &         & \xi & \xi^2 \\
& & &        &         &         & \xi^2
\end{bmatrix}.
\]
\endgroup
This can be achieved using $\kappa_{n-1}, \kappa_{n}$, and $\epsilon$.
\end{itemize}
\end{proof}

\new{So far, we have described the invariants $x'_i, y'_i$ of the action $\beta$ in purely algebraic terms. In fact, they agree with the corner invariants.}

\begin{lemma}[{\new{\cite[Lemma 2.10]{MR4460093}}}] \label{lemma_corner_invariants}
The map $\chi' \circ \Dualize$ takes an element $v \in \LeftSpace$ to the corner invariants of $\pi(v)$.
\end{lemma}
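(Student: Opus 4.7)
The plan is to compute the corner invariants $x_i, y_i$ of $\pi(v)$ directly from the relations encoded by $\Delta$, and verify that they agree with the explicit expressions $x'_i, y'_i$ defining $\chi'$. Throughout, I will work with the formula
\[
[P_1, P_2, P_3, P_4] = \frac{\det(u_1, u_2)\,\det(u_3, u_4)}{\det(u_1, u_3)\,\det(u_2, u_4)}
\]
for the cross-ratio of four collinear points, where $u_j$ is any lift of $P_j$ in a fixed $2$-dimensional ambient subspace; this is equivalent to the affine formula used in the paper, by choosing an affine coordinate on the line.

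First, I will read off lifts of the four intersection points appearing in the definition of $x_i$, in the plane spanned by $v_{i-2}, v_{i-1}$. By definition of $\Delta$, one has the relations
\[
a_{i-2} v_{i-2} + b_{i-2} v_{i-1} + c_{i-2} v_i + d_{i-2} v_{i+1} = 0, \qquad a_{i-1} v_{i-1} + b_{i-1} v_i + c_{i-1} v_{i+1} + d_{i-1} v_{i+2} = 0.
\]
The first relation gives $c_{i-2} v_i + d_{i-2} v_{i+1} = -a_{i-2} v_{i-2} - b_{i-2} v_{i-1}$, exhibiting a lift of $P := \overline{v_i v_{i+1}} \cap \overline{v_{i-2} v_{i-1}}$. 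To obtain a lift of $Q := \overline{v_{i+1} v_{i+2}} \cap \overline{v_{i-2} v_{i-1}}$, I eliminate $v_i$ between the two relations by taking $b_{i-1}\cdot(\text{first}) - c_{i-2}\cdot(\text{second})$, which yields a linear combination of $v_{i+1}, v_{i+2}$ on one side and $v_{i-2}, v_{i-1}$ on the other. Plugging the resulting four coordinate vectors (in the basis $v_{i-2}, v_{i-1}$) into the determinantal cross-ratio produces, after a short cancellation in the numerator, $x_i = a_{i-1} c_{i-2}\,b_{i-1}^{-1} b_{i-2}^{-1} = x'_i$.

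For $y_i$, the computation is dual: I work in the plane spanned by $v_{i+1}, v_{i+2}$. The second relation above immediately yields a lift of $\overline{v_{i-1} v_i} \cap \overline{v_{i+1} v_{i+2}}$ as $c_{i-1} v_{i+1} + d_{i-1} v_{i+2}$, while the same elimination-of-$v_i$ step above gives a lift of $\overline{v_{i+1} v_{i+2}} \cap \overline{v_{i-2} v_{i-1}}$ in the form $(b_{i-1} d_{i-2} - c_{i-2} c_{i-1}) v_{i+1} - c_{i-2} d_{i-1} v_{i+2}$. Feeding these, together with the trivial coordinate vectors of $v_{i+1}$ and $v_{i+2}$, into the determinantal cross-ratio gives $y_i = d_{i-2} b_{i-1}\,c_{i-2}^{-1} c_{i-1}^{-1} = y'_i$.

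The main bit of work is the elimination step producing the lift of $Q$: it relies on a cancellation between the two relations and dictates the precise combination $b_{i-1}\,(\text{first}) - c_{i-2}\,(\text{second})$ that makes $v_i$ drop out. Everything else is elementary determinantal bookkeeping. Projective invariance of the cross-ratio guarantees that these computations are independent of the chosen lifts, so the identities $x_i = x'_i$ and $y_i = y'_i$ hold on all of $\mathcal{V}_n$. Since $x_i, y_i$ depend only on $\pi(v)$, this shows that $\chi' \circ \Delta$ sends $v$ to the tuple of corner invariants of $\pi(v)$, as claimed.
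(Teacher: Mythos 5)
Your computation is correct, and it takes a somewhat different route than the paper. The paper's proof normalizes first: it uses Propositions \ref{prop_geoquo} and \ref{prop_dualizing_orbits_I} to reduce to a specially chosen representative of the $\alpha$-orbit of $v$, computes there, and then invokes invariance of both sides to conclude for all $v$; the details of the normalized computation are deferred to external references. You instead compute the cross-ratios directly from the defining relations of $\Delta$, working in the pencil $\overline{v_{i-2}v_{i-1}}$ for $x_i$ and $\overline{v_{i+1}v_{i+2}}$ for $y_i$, and using the determinantal form of the cross-ratio. The key step — eliminating $v_i$ via $b_{i-1}\cdot(\text{rel}_{i-2}) - c_{i-2}\cdot(\text{rel}_{i-1})$ to exhibit a lift of the double intersection point on both lines at once — is exactly the right move, and the determinants do collapse as you claim: $\det(u_3,u_4) = -a_{i-2}c_{i-2}a_{i-1}$ for $x_i$, and $\det(u_1,u_2) = b_{i-1}d_{i-2}d_{i-1}$ for $y_i$, giving $x_i = a_{i-1}c_{i-2}b_{i-1}^{-1}b_{i-2}^{-1}$ and $y_i = d_{i-2}b_{i-1}c_{i-2}^{-1}c_{i-1}^{-1}$. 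Your approach is more self-contained — it avoids picking a normalized representative and avoids appealing to the two preliminary propositions — at the cost of carrying the $a,b,c,d$ coefficients through the determinants rather than setting most of them to $1$ up front. Both arrive at the same identities; yours makes the cancellations visible rather than hiding them in the normalization.
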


\new{\begin{proof}
The idea is to compute $\chi' \circ \Delta$ for a carefully chosen element of $\LeftSpace$ that is $\alpha$-equivalent to $v$, then observe that $\alpha$-equivalence does not change the corner invariants or $\chi' \circ \Delta$, by Proposition \ref{prop_geoquo} and Proposition \ref{prop_dualizing_orbits_I}. See {\cite[Lemma 2.10]{MR4460093}} or \cite[Lemma 3.2.10]{weinreich_thesis} for two different approaches to the computation.
\end{proof}
}

\begin{prop}
\label{prop_dualizing_orbits_II}
Let $\theta, \theta' \in \LeftSpace$. If $\Dualize(\theta), \Dualize(\theta')$ are in the same $\acRight$-orbit, then $\theta, \theta'$ are in the same $\acLeft$-orbit.
\end{prop}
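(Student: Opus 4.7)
The plan is to apply a sequence of $\alpha$-actions that bring $\theta$ to $\theta'$ in three stages, using in turn the three ``factors'' $\GL_3$, $\eta_1, \ldots, \eta_n$, and $\xi$ of the $\alpha$-action. First, by the nondegeneracy hypothesis, the first three vectors $v_1, v_2, v_3$ of any element of $\LeftSpace$ are linearly independent, so a suitable $A \in \GL_3$ brings both $\theta, \theta'$ to elements satisfying $v_i(\theta) = v_i(\theta') = e_i$ for $i = 1, 2, 3$. In this normalization, one computes directly that, for example, $d_1(\Dualize(\theta)) = d_1(\Dualize(\theta')) = -\det(e_1, e_2, e_3) = -1$, and similar fixed values hold for the other minors determined by the normalization; any $\beta$-equivalence between $\Dualize(\theta)$ and $\Dualize(\theta')$ must preserve these.

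Next, I match the remaining vectors $v_4, \ldots, v_n$ inductively using the $\eta_i$-scalings. The inductive hypothesis is that $v_j(\theta) = v_j(\theta')$ for all $j < i$. The recursive formula $v_i = -(a_{i-3} v_{i-3} + b_{i-3} v_{i-2} + c_{i-3} v_{i-1}) / d_{i-3}$ (well-defined because no three consecutive $v$'s are coplanar, so $d_{i-3} \neq 0$) expresses $v_i(\theta)$ and $v_i(\theta')$ in the common basis $v_{i-3}, v_{i-2}, v_{i-1}$ in terms of the ratios $(a_{i-3}, b_{i-3}, c_{i-3})/d_{i-3}$ coming from $\Dualize(\theta)$ and $\Dualize(\theta')$ respectively. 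The $\beta$-equivalence, combined with the accumulated normalizations, should force these coefficient vectors to be proportional, and a suitable $\eta_i$-scaling then achieves $v_i(\theta) = v_i(\theta')$. Finally, once all $v_i$ for $i = 1, \ldots, n$ agree, the vectors $v_{n+1}, v_{n+2}, v_{n+3}$ (the columns of $M$ and $M'$ against the basis $e_1, e_2, e_3$) agree up to a common scalar factor absorbed into the $\xi$-scaling on $M$, yielding $M = M'$.

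The main obstacle is the inductive step of the second stage: verifying that the accumulated normalizations combine with the $\beta$-equivalence to force proportionality of the coefficient vectors at each step. This requires careful bookkeeping of the $\kappa$-, $\rho$-, and $\epsilon$-parameters of $\beta$, with special care at the boundary indices where $\rho_i$ ($i \leq 3$) simultaneously scales two rows of the dualizing matrix and where the $\epsilon$-factor enters the scaling formula. An alternative would be to exploit the fact (already noted via Lemma \ref{lemma_corner_invariants} and Proposition \ref{prop_geoquo}) that $\chi' \circ \Dualize$ computes the corner invariants of $\pi(\theta)$; then the statement reduces to showing that two twisted polygons with equal corner invariants are projectively equivalent, which can be verified by an analogous direct reconstruction argument.
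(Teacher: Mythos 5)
Your proposal contains two routes, and they fare quite differently. The primary three-stage plan (normalize $v_1, v_2, v_3$ to the standard basis, then recursively match $v_4, \ldots, v_n$ by $\eta$-scalings, then match $M$ by $\xi$) has a real gap at exactly the point you flag. After normalizing $v_1 = e_1, v_2 = e_2, v_3 = e_3$ for both $\theta$ and $\theta'$, the relation gives $v_4(\theta) = -(a_1, b_1, c_1)(\Delta\theta)$ and $v_4(\theta') = -(a_1, b_1, c_1)(\Delta\theta')$ (using $d_1 = -1$). For an $\eta_4$-scaling to bring $v_4(\theta)$ to $v_4(\theta')$ you need the ratio $(a_1 : b_1 : c_1)$ to agree. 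But the $\beta$-action scales $a_1, b_1, c_1$ by the independent factors $\rho_1, \rho_2, \rho_3$ (times a common $\kappa_1$), so the single constraint $\kappa_1 \rho_4 = 1$ you get from matching $d_1$ does not force $\rho_1 = \rho_2 = \rho_3$. Closing this gap requires using the global wrap-around constraints of $\beta$ (the identifications $\rho_i \leftrightarrow$ rows $i$ and $i+n$ for $i \le 3$, the $\epsilon$-twist, and so on), and doing that carefully amounts to re-deriving the invariant-theoretic content already packaged in Proposition \ref{prop_geoquo}. So this route is not wrong in spirit, but as written it does not go through, and completing it would duplicate work the paper has already done.

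Your ``alternative'' at the end is in fact the paper's proof, and it is the cleaner route: Proposition \ref{prop_geoquo} says $\beta$-orbits are classified by the $x'_i, y'_i$, and Lemma \ref{lemma_corner_invariants} identifies these with the corner invariants of $\pi(\theta)$; so the hypothesis is equivalent to ``$\pi(\theta)$ and $\pi(\theta')$ have the same corner invariants.'' The only refinement you would want to adopt from the paper is the choice of what to normalize: instead of $v_1, v_2, v_3$ (a linear basis), normalize $v_1, v_2, v_4, v_5$ (a projective $4$-arc). With this normalization, the corner invariants $x_3, y_3$ each pick out a line through $v_3$, and the intersection pins down $v_3$; then one marches forward through consecutive $5$-tuples. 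With your normalization of $v_1, v_2, v_3$, the first ``missing'' vertex is $v_4$, and the two corner invariants attached to it only become geometrically meaningful once $v_5$ is also known, so the reconstruction does not start as cleanly. Finally, note that once the projective points $\pi(\theta)$ and $\pi(\theta')$ are shown equal, you still need a last sentence observing that two lifts in $\LeftSpace$ of the same element of $\TwParamNice$ differ only by $\eta_i$- and $\xi$-scalings, which is exactly the residual part of $\alpha$.
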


\begin{proof}
We may use moves in $\alpha$ to assume that $\theta$ and $\theta'$ agree in $v_1, v_2, v_4, v_5$. Let $u, u'$ be the image of $\theta, \theta'$ in $\TwParamNice$, respectively. By Propositions \ref{prop_geoquo} and \ref{lemma_corner_invariants}, the corner invariants of $u$ and $u'$ agree. We claim that $u_3 = u'_3$. Indeed, the value of the corner invariant $x_3(u)$ determines the point of intersection of $\overline{u_3 u_4}$ with $\overline{u_1 u_2}$, which gives us a line on which $u_3$ must lie. The value of $y_3(u)$ similarly picks out a second line on which $u_3$ must lie. This determines $u_3$. Thus we know the relative positions of each consecutive group of 5 points in $u$, which determines all of $u$.
\end{proof}

Finally, we prove the following more detailed version of Theorem \ref{thm_moduli}.

\begin{prop} \label{prop_geoquoladder}
The geometric quotients $\TwParamNice / \PGL_3$ and $\LeftSpace / \acLeft$ exist, and are isomorphic. The dualization map $\Delta$ descends to an embedding
$$\LeftSpace / \acLeft \hookrightarrow \RightSpace / \acRight,$$
and there is an isomorphism
$$\LeftSpace / \acLeft \cong (\PP^1 \smallsetminus \{0,1, \infty\})^{2n}.$$
\end{prop}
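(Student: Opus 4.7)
The plan is to realize both geometric quotients as the same variety $(\PP^1 \smallsetminus \{0,1,\infty\})^{2n}$, using the corner-invariant map $\chi' \circ \Delta$ as the quotient morphism. By Propositions \ref{prop_dualizing_orbits_I} and \ref{prop_dualizing_orbits_II}, the fibers of
\[
\chi' \circ \Delta \colon \LeftSpace \to \RightSpace/\beta
\]
are exactly the $\alpha$-orbits in $\LeftSpace$, and by Lemma \ref{lemma_corner_invariants} the map records the corner invariants of $\pi(v)$.

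First I would pin down the image. Each corner invariant is a cross-ratio of four points on a line in $\PP^2$, and the nondegeneracy conditions in Definition \ref{def_twisted_ngon} force these four points to be distinct, so the image lies in $(\PP^1 \smallsetminus \{0,1,\infty\})^{2n} \subset \Gm^{2n} \cong \RightSpace/\beta$. For surjectivity, given a target tuple $(x_i, y_i)$, I would choose scalars $(a_i, b_i, c_i, d_i) \in (k^*)^{4n}$ realizing the monomial formulas of Proposition \ref{prop_geoquo} (for instance, set $b_i = c_i = 1$ and then $a_i = x'_{i+1}$, $d_i = y'_{i+2}$), pick $v_1, v_2, v_3 \in \A^3$ in general position, and extend via the linear recurrence
\[
a_i v_i + b_i v_{i+1} + c_i v_{i+2} + d_i v_{i+3} = 0.
\]
The $n$-periodicity of the coefficients produces a monodromy $M \in \GL_3$, giving an element of $\LeftSpace$ that maps to the target.

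To promote this orbit-wise bijection to a geometric quotient, I would construct local sections of $\chi' \circ \Delta$. Using the $\GL_3$-part of $\alpha$ to place $v_1, v_2, v_3$ in a fixed frame and the $\Gm^{n+1}$-part to kill the remaining scalar freedom (e.g.\ by the normalization $b_i = c_i = 1$), the recurrence above defines an explicit morphism from an affine open of $(\PP^1 \smallsetminus \{0,1,\infty\})^{2n}$ back into $\LeftSpace$. A finite covering of the base by such normalization charts shows $\chi' \circ \Delta$ is a locally trivial principal $\alpha$-bundle (modulo its $1$-dimensional stabilizer), hence categorical, and together with the orbits-equal-fibers property it is geometric. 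This establishes $\LeftSpace/\alpha \cong (\PP^1 \smallsetminus \{0,1,\infty\})^{2n}$ and exhibits the descended $\Delta$ as a locally closed immersion into $\RightSpace/\beta$.

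Finally, the projectivization $\pi \colon \LeftSpace \to \TwParamNice$ is a geometric quotient for the $\Gm^{n+1}$-subaction ($\eta_i$ and $\xi$), equivariant with respect to the residual $\GL_3$-action, which descends to the $\PGL_3$-action of \eqref{eqn:actionofPGLonUn}; composing yields $\TwParamNice/\PGL_3 = \LeftSpace/\alpha$. The main obstacle will be the local-sections step: one must verify that the fixed-frame normalizations can be arranged on a Zariski cover of $(\PP^1 \smallsetminus \{0,1,\infty\})^{2n}$ so that the constructed sections land in the open nondegenerate locus $\LeftSpace$, with invertible monodromy, rather than merely in the ambient $(\A^3)^n \times \GL_3$.
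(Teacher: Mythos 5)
Your proposal is essentially the same strategy as the paper's, and I believe it can be completed, but the way you package the quotient-existence step is more demanding than necessary. The paper also constructs a section: it defines a global map $\Dualize' \colon \RightSpace^\circ \to \LeftSpace$ (with $v_1,v_2,v_3$ at the standard basis and the rest determined by the recurrence), and then establishes the universal property of $\chi$ by a diagram chase: given an $\alpha$-invariant $\nu$, the composite $\nu \circ \Dualize'$ is $\beta$-invariant (this uses Proposition \ref{prop_dualizing_orbits_II}), hence factors uniquely through the already-known quotient $\chi'$, and precomposing with $\Dualize$ gives the factorization of $\nu$ through $\chi$. Geometricity then follows from Proposition \ref{prop_dualizing_orbits_II} exactly as you say. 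By contrast, you aim to show that $\chi$ is a locally trivial principal bundle modulo a stabilizer; this is a strictly stronger claim, and making it rigorous requires you to argue that the $\alpha$-action has constant stabilizer along $\LeftSpace$ and to verify the descent data, none of which is needed for the universal-property route. Note also that the single normalization $b_i = c_i = 1$, $a_i = x'_{i+1}$, $d_i = y'_{i+2}$ already gives a \emph{global} section over all of $(\PP^1 \smallsetminus \{0,1,\infty\})^{2n}$, so your appeal to a Zariski cover by normalization charts is an unnecessary complication. You do correctly flag the genuine technical checkpoint, namely that the section lands inside the open nondegenerate locus $\LeftSpace$ — this is exactly what the restriction from $\RightSpace$ to $\RightSpace^\circ$ (i.e.\ imposing $x'_i, y'_i \neq 1$) accomplishes, and it is worth spelling out explicitly, since it is where the excluded values $\{0,1,\infty\}$ enter.
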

\begin{proof}

Let $\RightSpace^\circ \subset \RightSpace$ be the subset where all $x'_i, y'_i \not\in \{0, 1\}$. 
The geometric quotient 
$$\RightSpace / \acRight \cong \Gm^{2n}$$
restricts to a geometric quotient
$$\chi' \colon \RightSpace^\circ \to (\PP^1 \smallsetminus \{0, 1, \infty\})^{2n}.$$
Consider the map 
$$\Delta' \colon \RightSpace^\circ \to \LeftSpace,$$
where $\Delta'(w)$ has $v_1, v_2, v_3$ at the standard basis vectors, and each remaining vector $v_i$ for $4 \leq i \leq n$ is determined recursively by the relation
$$a_{i - 3} v_{i - 3} + b_{i  - 3} v_{i - 2} + c_{i - 3} v_{i - 1} + d_{i - 3} v_i = 0.$$
We formally compute $v_{n + 1}, v_{n + 2}, v_{n + 3}$ in the same way, and then
define $M$ to be the matrix $[v_{n+1} v_{n+2} v_{n+3}]$.

Define 
$$\chi \colon \LeftSpace \to (\PP^1 \smallsetminus \{0, 1, \infty\})^{2n},$$
$$v \mapsto (x_1(v), \hdots, x_n(v), y_1(v), \hdots, y_n(v)).$$
We claim that $\chi$ satisfies the necessary universal property to be a categorical quotient $\LeftSpace / \acLeft$. The argument is the following elementary diagram chase.

We know that
$$\chi \circ \Delta = \chi',$$
$$\chi' \circ \Delta' = \chi.$$
Suppose that we are given a variety $V'$ and an $\acLeft$-invariant map $\nu \colon \LeftSpace \to V'$. We want to show that there exists a unique map 
$$\nu' \colon (\PP^1 \smallsetminus \{0, 1, \infty\})^{2n} \to V'$$
such that
$$\nu' \circ \chi = \nu.$$
Evidently
$$\nu \circ \Delta'$$
is $\acRight$-invariant, so from the universal property of the categorical quotient $\RightSpace^\circ / \acRight$, there exists a unique map $\nu'$ such that
$$\nu' \circ \chi' = \nu \circ \Delta'.$$
Then composing each side of the equation with $\Delta$ on the right gives the desired equation
$$\nu' \circ \chi = \nu.$$
Further, if some other $\nu''$ satisfies
$$\nu'' \circ \chi = \nu,$$
then composing with $\Delta'$ on the right gives
$$\nu'' \circ \chi' = \nu \circ \Delta',$$
so again by the universal property, 
$$\nu'' = \nu'.$$

Suppose that $v, v' \in \LeftSpace$, and suppose $\chi(v) = \chi(v')$.
Then $\Dualize(v)$ and $\Dualize(v')$ are in the same $\acRight$-orbit. So, by Proposition \ref{prop_dualizing_orbits_II}, we know that $v$ and $v'$ are in the same $\acLeft$-orbit. 

Now that we know that the geometric quotient $\LeftSpace / \alpha$ exists, we compute it another way. Since the $\Gm$ factors act by scaling the $\A^3$'s and the $\GL_3$, we can write $\LeftSpace / \alpha$ as
$$\TwParamNice / \GL_3.$$
Since scaling is trivial on $\TwParamNice$,
$$\TwParamNice / \GL_3 \cong \TwParamNice / \PGL_3.$$
This concludes the proof of Proposition~\ref{prop_geoquoladder}.
\end{proof}

\begin{proof}[Proof of Theorem \ref{thm_moduli}]
Immediate from Proposition \ref{prop_geoquoladder}.
\end{proof}

\section{Formulas for the Pentagram Map} \label{sect_formulas}

\begin{defn}
\new{
Fix $n \geq 4$. The \emph{pentagram map on parameter space} is the rational map
$$ F \colon \TwParamNice \dashrightarrow \TwParamNice $$
that sends a twisted $n$-gon $(v_i)_{i \in \Z}$ to the twisted $n$-gon $(w_i)_{i \in \Z}$, where $w_i$ is the intersection of the diagonals $\overline{v_{i-1} v_{i+1}}$ and $\overline{v_i v_{i+1}}$. See Figure \ref{fig_twisted_pentagram}. The map $F$ respects the action of $\PGL_3$ on $\TwParamNice$ defined by \eqref{eqn:actionofPGLonUn}.}

\new{
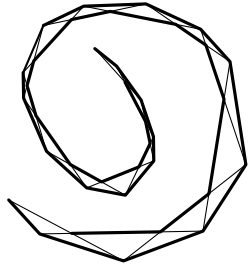
\begin{figure}[h]
    \centering
\begin{tikzpicture}[line cap=round,line join=round,x=0.2cm,y=0.2cm]
\clip(-7.009647700753595,-10.564578724198437) rectangle (19.718881204528017,9.557386719897401);
\draw [very thick] (-2.891866356634992,-3.9970847824212115)-- (-0.8303782756215154,-6.2560024930770926)-- (4.739541359208339,-8.050161461492808)-- (10.020898497495287,-6.069265836311675)-- (12.87299080185351,-1.6457336195860943)-- (11.816789921380966,5.205754090593068)-- (9.86521153117463,7.359503066464103)-- (6.230561212128679,9.044828974864867)-- (2.091444811951597,8.893518225906615)-- (-0.5878871914892096,7.609064497931349);
\draw [very thick] (-1.9303890476208236,4.47699994338703)-- (-1.9011949669064991,2.7756549521154943)-- (-0.38020773883920667,-0.7553227755709966)-- (2.2942281882985545,-3.192606172913549)-- (4.860877815955236,-3.6792572676396462)-- (6.751509865395017,-1.3991382203323373)-- (6.728981535734588,0.21527465043160743)-- (5.908659220329106,2.6466864407898383)-- (4.2815548756383,4.903414786609194)-- (2.8312234223489767,6.070758613727138);
\draw [line width=0.8pt] (-0.5878871914892096,7.609064497931349)-- (-1.9303890476208236,4.47699994338703);
\draw [thin] (-2.891866356634992,-3.9970847824212115)-- (4.739541359208339,-8.050161461492808)-- (12.87299080185351,-1.6457336195860943)-- (9.86521153117463,7.359503066464103)-- (2.091444811951597,8.893518225906615)-- (-1.9303890476208236,4.47699994338703)-- (-0.38020773883920667,-0.7553227755709966)-- (4.860877815955236,-3.6792572676396462)-- (6.728981535734588,0.21527465043160743)-- (4.2815548756383,4.903414786609194);
\draw [thin] (-0.8303782756215154,-6.2560024930770926)-- (10.020898497495287,-6.069265836311675)-- (11.816789921380966,5.205754090593068)-- (6.230561212128679,9.044828974864867)-- (-0.5878871914892096,7.609064497931349)-- (-1.9011949669064991,2.7756549521154943)-- (2.2942281882985545,-3.192606172913549)-- (6.751509865395017,-1.3991382203323373)-- (5.908659220329106,2.6466864407898383)-- (2.8312234223489767,6.070758613727138);
\draw [very thick] (1.2925892642847916,-6.219468924805591)-- (7.193431812987472,-6.117922937850975)-- (10.417514202862458,-3.579221103171445)-- (11.418908453388788,2.7077626082062665)-- (10.217460587155392,6.304875769684928)-- (8.206635010051723,7.686793763290191)-- (3.857221413885827,8.545073492048338)-- (1.2799376057875334,8.002373405518382)-- (-1.2299046385924992,5.2462266959367785)-- (-1.6740279667298055,3.6117050903243446)-- (-1.0803931972430878,1.6080112813788412)-- (1.2010463036126293,-1.637484149795929)-- (3.278601682468304,-2.796525681919346)-- (5.763994833388017,-1.796482765932262)-- (6.510202924490622,-0.2408243741886965)-- (6.206809396765394,1.215515516729459)-- (4.837476977164966,3.8385245656892244);
\end{tikzpicture}
    \caption{The pentagram map $F$ on parameter space. The exterior bold line is part of the input twisted polygon, and the interior bold line is part of its image by the pentagram map.}\label{fig_twisted_pentagram}
\end{figure}
}

\new{The \emph{pentagram map on moduli space}, or just \emph{pentagram map}, is the rational map
$$ f \colon \Tw \dashrightarrow \Tw $$
induced on the moduli space of twisted $n$-gons up to the $\PGL_3$-action \eqref{eqn:actionofPGLonUn}. In other words, the map $f$ is the unique dominant rational map such that the following diagram commutes:}

\begin{center}
\begin{tikzcd}
\TwParamNice \arrow[r, dashed, "F"] \arrow[d, dashed] & \TwParamNice \arrow[d, dashed] \\
\Tw \arrow[r, dashed, "f"] & \Tw
\end{tikzcd}
\end{center}
\end{defn}

We showed in Theorem \ref{thm_moduli} that
$$ \Tw \cong (\PP^1 \smallsetminus \{0,1,\infty\})^{2n}.$$
\new{The coordinates on $\Tw$ are $x_i, y_i$ for each $i = 1, \hdots, n$. In these coordinates, the pentagram map has a straightforward formula, which we apply in Section \ref{sect_lax}. Define
$$f^*(x_i) = x_i \circ f, \quad f^*(y_i) = y_i \circ f.$$
This is just the usual algebro-geometric definition of pullback of a function by a rational map.}
\new{
\begin{prop}[\cite{MR2434454}] \label{prop_formulas}
The pentagram map $f \colon \Tw \dashrightarrow \Tw$ can be written in coordinates as
\begin{align*}
    f^*(x_i) &= x_{i + 1} \frac{1 - x_i y_i}{1 - x_{i+2}y_{i+2}},\\
    f^*(y_i) &= y_{i + 2} \frac{1 - x_{i+3} y_{i+3}}{1 - x_{i+1} y_{i+1}}.
\end{align*}
\end{prop}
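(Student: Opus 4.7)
The plan is to lift everything to $\LeftSpace$ and track how the defining four-term relations transform under the pentagram map. Pick any lift $(v_i)_{i \in \Z}$ of a generic twisted $n$-gon to vectors in $\A^3$, so that we have the recurrences
$$a_i v_i + b_i v_{i+1} + c_i v_{i+2} + d_i v_{i+3} = 0$$
of the preceding section. By Lemma \ref{lemma_corner_invariants} together with Proposition \ref{prop_geoquo}, the corner invariants of $v$ and of its image $w = F(v)$ can be read off from the $\beta$-orbit of any lift to $\RightSpace$, so it suffices to exhibit one convenient lift of $w$ and compute.

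A natural lift of $w$ is given by
$$u_i \colonequals a_{i-1} v_{i-1} + c_{i-1} v_{i+1};$$
the recurrence at index $i-1$ rewrites this as $u_i = -b_{i-1} v_i - d_{i-1} v_{i+2}$, so $[u_i]$ lies on both $\overline{v_{i-1} v_{i+1}}$ and $\overline{v_i v_{i+2}}$, hence $[u_i] = w_i$. Next I would compute the coefficients $\tilde a_i, \tilde b_i, \tilde c_i, \tilde d_i$ satisfying
$$\tilde a_i u_i + \tilde b_i u_{i+1} + \tilde c_i u_{i+2} + \tilde d_i u_{i+3} = 0$$
by a direct $3 \times 4$ linear algebra computation. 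After the affine normalization $v_j, v_{j+1}, v_{j+2} = e_1, e_2, e_3$, the remaining vectors $v_{j-1}, v_{j+3}, v_{j+4}$ are explicit rational functions of the $a_k, b_k, c_k, d_k$ via the recurrence, hence so are the four vectors $u_j, \ldots, u_{j+3}$; the coefficients $\tilde a_j, \tilde b_j, \tilde c_j, \tilde d_j$ then emerge as signed $3 \times 3$ minors.

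Plugging the resulting $(\tilde a_i, \tilde b_i, \tilde c_i, \tilde d_i)$ into Proposition \ref{prop_geoquo} yields
$$f^*(x_i) = \frac{\tilde a_{i-1}\tilde c_{i-2}}{\tilde b_{i-1}\tilde b_{i-2}}, \qquad f^*(y_i) = \frac{\tilde d_{i-2}\tilde b_{i-1}}{\tilde c_{i-2}\tilde c_{i-1}},$$
and the claimed identities fall out after simplification. The main obstacle is packaging the resulting products of $a, b, c, d$ so that the factors $1 - x_j y_j$ appear: geometrically, $x_j y_j = 1$ is the condition that $v_{j-2}, v_j, v_{j+2}$ be collinear (the degeneracy noted after Theorem \ref{thm_moduli}), so $1 - x_j y_j$ should coincide with a specific Plücker-like combination of the $a, b, c, d$ at indices near $j$. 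Identifying this expression cleanly is what lets one collapse the minors into the compact form on the right-hand side. Given that the formula is Schwartz's \cite{MR2434454} up to reindexing, an alternative route would be to verify that the corner invariants extracted from vector lifts in Lemma \ref{lemma_corner_invariants} agree with Schwartz's original cross-ratio definition and then invoke his computation directly; this seems like the cleanest way to avoid the bookkeeping.
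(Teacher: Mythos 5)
Your first route is a genuinely different argument from the paper's, and it is sound in outline. The paper derives the formulas from the matrix refactorization identity $\tildeDplus \Dminus = \tildeDminus \Dplus$ of Theorem~\ref{thm_refac}: applying the identity to a delta sequence yields three clean linear relations among $\tilde a, \tilde b, \tilde c, \tilde d$ and $a, b, c, d$, which can be rearranged directly into the corner-invariant formula, without ever writing out the eigenvector coordinates of the image polygon explicitly. You instead propose to fix a basis, compute the lift $u_i = a_{i-1}v_{i-1} + c_{i-1}v_{i+1}$ of $F(v)$ (which indeed equals $-(\Dminus v)_{i-1}$, so your lift agrees with Lemma~\ref{lemma_pentagram_difference_op} after a shift of index), and then solve directly for $\tilde a_i,\tilde b_i,\tilde c_i,\tilde d_i$ by linear algebra. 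Both routes are field-independent and would reach the same endpoint; the refactorization approach replaces the brute-force $3\times 3$ minor computation by a structural identity, which is what makes the packaging into factors $1 - x_jy_j$ come out essentially for free in the paper's derivation. Your direct route would have to rediscover the identity $1 - x_jy_j = 1 - \frac{a_{j-1}c_{j-2}d_{j-2}b_{j-1}}{b_{j-1}b_{j-2}c_{j-2}c_{j-1}} = \frac{b_{j-1}c_{j-1} - a_{j-1}d_{j-2}}{b_{j-1}c_{j-1}}$ on its own; the paper's third displayed equation encodes exactly this combination. So the computation is doable but heavier.

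One caution on your closing suggestion: falling back to ``invoke Schwartz's computation directly'' does not actually resolve the proposition as stated here. The paper remarks explicitly that the existing proofs over $\R$ rely on $ab$-coordinates, which use the identification $\SL_3(\R)\cong\PGL_3(\R)$ and hence are not defined over an arbitrary algebraically closed $k$. To cite Schwartz you would still need to check that his cross-ratio bookkeeping never uses anything specific to $\R$, which is the verification the paper is trying to sidestep by giving the refactorization proof. Your first route avoids this issue and is the right way to make the proposal rigorous.
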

The condition $1 - x_i y_i = 0$ causes vanishing denominators in the formula of $f$. This condition is equivalent to collinearity of the points $v_{i - 2}, v_i, v_{i+2}$.}

\new{These formulas were first published in \cite{MR2434454}, and a proof was given over $\R$ using $ab$-coordinates in \cite{MR2679816}; see also \cite[Section 2.3]{MR4460093}. Since the $ab$-coordinates are not well-defined over an arbitrary base field, we give a new field-independent proof of the formulas following the matrix refactorization argument of Izosimov \cite{MR4430020}. The remainder of this section builds up to the proof of Proposition \ref{prop_formulas}.
}

Recall the definitions of $\LeftSpace$ and $\RightSpace$ from Section \ref{sect_moduli}. First, we explain how to view $\RightSpace$ as a space of difference operators. We are working over an algebraically closed base field $k$. Let $k^\Z$ be the space of $\Z$-indexed sequences in $k$.
Let $\Shift \colon k^\Z \to k^\Z$
be the left shift operator, defined on each $\sigma \in k^\Z$ by
$$ (\Shift \sigma)_i = \sigma_{i+1}. $$
Each $s \in k^\Z$
defines a \emph{scalar operator} on $k^\Z$, via the rule
$$(s \sigma)_i = s_i \sigma_i.$$
A \emph{third-order difference operator} is a map $k^\Z \to k^\Z$ of the form
$$ a + b \Shift + c \Shift^2 + d \Shift^3,$$
where $a, b, c, d$ are scalar operators. Note that scalar operators generally do not commute with $\Shift$, since
\[
\bigl(\Sigma\circ s(\sigma)\bigr)_i = s_{i+1}\sigma_{i+1}
\quad\text{and}\quad
\bigl(s \circ\Sigma (\sigma)\bigr)_i = s_i\sigma_{i+1}.
\]

Any element of $\RightSpace$ can be identified with a third-order difference operator by reading off the sequences $a_i, b_i, c_i, d_i$. Specifically, the space~$\RightSpace$ is a subset of $(k^\Z)^4$, and we identify  each element $(a_i,b_i,c_i,d_i)_{i\in\Z}$ of~$\RightSpace$ with a third-order difference operator via
\[
(a_i,b_i,c_i,d_i)_{i\in\Z} \longleftrightarrow (a_i)+(b_i)\Sigma+(c_i)\Sigma^2+(d_i)\Sigma^3.
\]
Given any third-order difference operator $D = a + b \Shift + c \Shift^2 + d \Shift^3$, we define two associated third-order difference operators
$$\Dplus = a + c\Shift^2,$$
$$\Dminus = b\Shift + d\Shift^3.$$
Note that $D = \Dplus + \Dminus$.

We extend the action of a difference operator on $k^\Z$ coordinatewise to the set $(k^3)^\Z$ of sequences in $k^3$. Thus each third-order difference operator defines a map $\LeftSpace \to (k^3)^\Z$.

\begin{lemma}[\cite{MR4430020}] \label{lemma_pentagram_difference_op}
Let $u \in \Dom F$, let $v\in\LeftSpace$ be a lift of $u$, and let $D \in \RightSpace$ annihilate $v$. Then
\[
\Dminus v = F(u) \quad \text{as elements of $\TwParamNice$.}
\]
\end{lemma}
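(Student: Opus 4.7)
The plan is to unpack the annihilation condition componentwise. Writing out $Dv = 0$ at index $i$ gives
$$a_i v_i + b_i v_{i+1} + c_i v_{i+2} + d_i v_{i+3} = 0,$$
which, separating the even and odd shift powers, is equivalent to $(\Dplus v)_i = -(\Dminus v)_i$, i.e.,
$$a_i v_i + c_i v_{i+2} = -\bigl(b_i v_{i+1} + d_i v_{i+3}\bigr).$$
The left-hand side lies in the 2-plane $\text{span}(v_i, v_{i+2}) \subset \A^3$, while the right-hand side lies in $\text{span}(v_{i+1}, v_{i+3})$, and the two are equal, so $(\Dminus v)_i$ sits in the intersection of these two planes.

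Projectivizing, the point $[(\Dminus v)_i] \in \PP^2$ lies on both of the projective lines $\overline{u_i u_{i+2}}$ and $\overline{u_{i+1} u_{i+3}}$, which are precisely the two diagonals whose intersection defines the pentagram map $F$ (up to the paper's indexing convention, which is fixed by matching the coordinate formulas in Proposition \ref{prop_formulas}). Because $u \in \Dom F$, the nondegeneracy condition defining $\LeftSpace$ guarantees that among the four consecutive points $u_i, u_{i+1}, u_{i+2}, u_{i+3}$ no three are collinear (the alternating-collinearity exception only involves triples of the form $v_j, v_{j+2}, v_{j+4}$), so the two diagonals are distinct and meet in a unique point. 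The same nondegeneracy forces $(\Dminus v)_i \neq 0$, so the projectivization is well-defined.

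Finally, to promote the pointwise equality in $\PP^2$ to an equality in $\TwParamNice$, we check that $\Dminus v$ has the right monodromy. The coefficients $b_i, d_i$ of any $D \in \RightSpace$ are $n$-periodic by construction, and $v_{i+n} = Mv_i$, so
$$(\Dminus v)_{i+n} = b_i v_{i+n+1} + d_i v_{i+n+3} = M\bigl(b_i v_{i+1} + d_i v_{i+3}\bigr) = M \cdot (\Dminus v)_i.$$
Thus $\Dminus v$ is a lift of a twisted $n$-gon with monodromy $M$, matching the monodromy of $F(u)$ (since $F$ is defined by an intersection construction that commutes with any global projective transformation, and so preserves the monodromy matrix in parameter space). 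The main obstacle is purely bookkeeping: carefully aligning the index convention so that the intersection $\overline{u_i u_{i+2}} \cap \overline{u_{i+1} u_{i+3}}$ is identified with the $i$th vertex of $F(u)$ under the paper's conventions. All of the algebraic content is forced by the single identity $\Dplus v = -\Dminus v$ read off componentwise.
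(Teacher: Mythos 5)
Your proof is correct and follows the same route as the paper's: unpack $Dv=0$ to the identity $\Dplus v = -\Dminus v$, observe that $(\Dminus v)_i$ therefore lies in $\mathrm{span}(v_i,v_{i+2})\cap\mathrm{span}(v_{i+1},v_{i+3})$, and use nondegeneracy plus $D\in\RightSpace$ to get uniqueness and nonvanishing. You go a step further than the paper by explicitly verifying that $\Dminus v$ inherits the monodromy $M$ (the paper leaves this implicit), which is a sensible bit of extra care given that the conclusion is an equality in $\TwParamNice$ rather than merely in $(\PP^2)^{\Z}$.
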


\begin{proof}
Observe that $(F(v))_i$ lies in the plane spanned by $v_i, v_{i + 2}$ and also in the plane spanned by $v_{i + 1}, v_{i + 3}$. We have 
$$\Dminus v = b\Sigma v + d\Sigma^3 v = -av - c\Sigma^2 v,$$
so $(\Dminus v)_i$ lies in the desired spans. Since $D$ is in $\RightSpace$, we know $(\Dminus v)_i$ is nonzero. The two planes intersect in a unique line, by the nondegeneracy condition on twisted $n$-gons. Thus, the image of $\Dminus v$ in $\TwParamNice$ is $F(u)$.
\end{proof}

The next theorem presents the pentagram map as a matrix refactorization.

\begin{thm}[\cite{MR4430020}] \label{thm_refac}
Suppose that $u \in \Dom F$.
Let $v$ be any lift of $u$ to $\LeftSpace$. Let $D \in \RightSpace$ annihilate $v$. Then there exists $\tilde{D} \in \RightSpace$ such that
\begin{equation} \label{eq_refac}
    \tildeDplus \Dminus = \tildeDminus \Dplus.
\end{equation}
Further, for each lift $\tilde{v} \in \LeftSpace$ of $F(u)$, there is a choice of $\tilde{D}$ such that
\begin{equation*}
    \tilde{D} (\tilde{v}) = 0.
\end{equation*}
\end{thm}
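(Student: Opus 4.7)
The plan is to solve the refactorization equations directly to construct $\tilde{D}$, verify it annihilates the natural lift $w \colonequals D^- v$ (which belongs to $\LeftSpace$ by Lemma~\ref{lemma_pentagram_difference_op}), and then obtain annihilators of other lifts of $F(u)$ by rescaling coordinate-by-coordinate.

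Expanding $\tildeDplus \Dminus = \tildeDminus \Dplus$ and matching the coefficients of $\Sigma, \Sigma^3, \Sigma^5$ yields three scalar equations per index $i$:
\[
\tilde{a}_i b_i = \tilde{b}_i a_{i+1}, \qquad \tilde{a}_i d_i + \tilde{c}_i b_{i+2} = \tilde{b}_i c_{i+1} + \tilde{d}_i a_{i+3}, \qquad \tilde{c}_i d_{i+2} = \tilde{d}_i c_{i+3}.
\]
The first and third equations determine $\tilde{b}_i$ and $\tilde{d}_i$ in terms of $\tilde{a}_i$ and $\tilde{c}_i$; substituting these into the middle equation gives an explicit rational proportion between $\tilde{a}_i$ and $\tilde{c}_i$. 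Thus $\tilde{D}$ is determined up to a per-index scalar. To land in $\RightSpace$, one needs Plücker-type nonvanishings such as $d_i a_{i+1} - b_i c_{i+1} \neq 0$, which guarantee that every $\tilde{a}_i, \tilde{b}_i, \tilde{c}_i, \tilde{d}_i$ lies in $\Gm$; these nonvanishings follow from the general-position hypothesis built into $\LeftSpace$.

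To verify $\tilde{D}(w) = 0$, I use the equivalent form $\tilde{D} D^- = \tilde{D}^- D$ of the refactorization, obtained by adding $\tilde{D}^- D^-$ to both sides. Applying both operators to $v$ and using $D v = 0$ gives $\tilde{D}(w) = \tilde{D}(D^- v) = \tilde{D}^-(D v) = 0$. For any other lift $\tilde{v}$ of $F(u)$, writing $\tilde{v}_i = \lambda_i w_i$ for suitable $\lambda_i \in k^*$, the coordinatewise rescaling
\[
(\tilde{a}_i, \tilde{b}_i, \tilde{c}_i, \tilde{d}_i) \longmapsto (\tilde{a}_i/\lambda_i,\ \tilde{b}_i/\lambda_{i+1},\ \tilde{c}_i/\lambda_{i+2},\ \tilde{d}_i/\lambda_{i+3})
\]
produces an operator in $\RightSpace$ annihilating $\tilde{v}$, establishing the ``further'' claim.

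The main obstacle will be verifying the Plücker-type nonvanishings needed to place $\tilde{D}$ in $\RightSpace$. After substituting the definitions of $a_j, b_j, c_j, d_j$ as signed $3 \times 3$ minors of consecutive four-tuples $v_j, v_{j+1}, v_{j+2}, v_{j+3}$, each required nonvanishing becomes a degree-two polynomial in the Plücker coordinates of the $3 \times 5$ submatrix $[v_j, \ldots, v_{j+4}]$. The $\LeftSpace$ nondegeneracy condition that no three of five consecutive $v_j$ are collinear, except possibly the exceptional triple $v_j, v_{j+2}, v_{j+4}$, should translate into these nonvanishings via standard Plücker identities, though the bookkeeping is the fiddly part of the argument.
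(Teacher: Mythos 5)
Your proposal follows the same overall structure as the paper's proof --- existence of $\tilde{D}$ by linear algebra, verification of $\tilde{D}(\Dminus v)=0$ by the same algebraic manipulation of \eqref{eq_refac}, and rescaling to reach a given lift $\tilde{v}$ --- so the basic architecture is correct. Your observation that matching coefficients of $\Sigma, \Sigma^3, \Sigma^5$ decouples the system into three equations per index $i$ involving only $\tilde{a}_i, \tilde{b}_i, \tilde{c}_i, \tilde{d}_i$ is a genuine refinement of the paper's bare $3n$-vs.-$4n$ dimension count: it lets you choose $\tilde{D}$ nonzero in \emph{every} row, not just nonzero overall, which is actually needed later.

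However, you identify the wrong ``main obstacle.'' You already prove $\tilde{D}(\Dminus v) = 0$, and once you have this, the Plücker bookkeeping is completely unnecessary: since $u \in \Dom F$, the sequence $\Dminus v$ lies in $\LeftSpace$, so every consecutive 4-tuple $(\Dminus v)_j, \ldots, (\Dminus v)_{j+3}$ is in general linear position with no three coplanar. Any nontrivial linear relation on such a 4-tuple has all four coefficients nonzero. So if row $j$ of $\tilde{D}$ is nonzero (which your per-index decoupling guarantees you can arrange), all of $\tilde{a}_j, \tilde{b}_j, \tilde{c}_j, \tilde{d}_j$ are automatically nonzero, and hence $\tilde{D} \in \RightSpace$ with no further work. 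The paper uses exactly this shortcut. Verifying quadratic Plücker nonvanishings like $d_i a_{i+1} - b_i c_{i+1} \neq 0$ directly --- which, if you work it out, reduces to noncollinearity of $v_i, v_{i+2}, v_{i+4}$, a condition that is \emph{not} part of the $\LeftSpace$ nondegeneracy and holds only on $\Dom F$ --- is harder than the problem deserves, so you should restructure to use the annihilation argument for membership in $\RightSpace$ rather than treating it as a separate hurdle.

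One caution shared by your write-up and the paper's: the row-rescaling $(\tilde{a}_i, \tilde{b}_i, \tilde{c}_i, \tilde{d}_i) \mapsto (\tilde{a}_i/\lambda_i, \tilde{b}_i/\lambda_{i+1}, \tilde{c}_i/\lambda_{i+2}, \tilde{d}_i/\lambda_{i+3})$ does produce an annihilator of $\tilde{v}$, but it does not literally preserve the operator identity \eqref{eq_refac} unless $\lambda$ is constant (check your own equation $\tilde{a}_i b_i = \tilde{b}_i a_{i+1}$ under the rescaling). Since the refactorization is only ever used downstream through $\rho$-invariant ratios like $\tilde{a}_i/\tilde{b}_i$, this does not affect the application, but you should note explicitly that \eqref{eq_refac} is to be read up to the $\beta$-action rather than claim the rescaled $\tilde{D}$ satisfies it on the nose.
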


\begin{proof}
First we check that the factorization
\begin{equation}
\label{eqn:tildeDplusDminuseqtildeDminusDplus}
\tildeDplus \Dminus = \tildeDminus \Dplus
\end{equation}
is possible. We start by looking for a solution $\tilde{D}$ in the space of all $n$-periodic third-order difference operators (so some of the $a_i, b_i, c_i, d_i$ may be 0). The equation~\eqref{eqn:tildeDplusDminuseqtildeDminusDplus}
imposes $3n$ homogeneous linear conditions on $4n$ variables, so by linear algebra, there is a nontrivial solution.

Next we show that
$$ \tilde{D} (\tilde{v}) = 0.$$
Our initial choice of $\tilde{D}$ annihilates $\Dminus v$, by the following calculation:
\[
\begin{aligned}
    \tilde D v
    &= (\tildeDplus + \tildeDminus) \Dminus v 
    &&\text{since $\tilde D=\tildeDplus + \tildeDminus$,} \\
    &= \tildeDplus \Dminus v + \tildeDminus \Dminus v    \\
    &= \tildeDminus \Dplus v + \tildeDminus \Dminus v 
    &&\text{using \eqref{eqn:tildeDplusDminuseqtildeDminusDplus},}\\
    &= \tildeDminus (\Dplus + \Dminus) v \\
    &= \tildeDminus D v 
    &&\text{since $D=\Dplus+\Dminus$,}\\
    &= \tildeDminus 0
    &&\text{since $D v=0$ by assumption,}\\
    &= 0.
\end{aligned}
\]
We claim that $\tilde{D}$ belongs to $\RightSpace$. We note that $\Dminus$ has every consecutive 4-tuple in projective general position by the assumption that $u \in \Dom F$. Since $\tilde{D}(\Dminus v) = 0$ and $\tilde{D} \neq 0$, all the $a_i, b_i, c_i, d_i$ are nonzero, so $\tilde{D}$ is in $\RightSpace$.

By Lemma \ref{lemma_pentagram_difference_op}, we have that $\Dminus v$ and $\tilde{v}$ have the same image in $\TwParamNice$, so $\Dminus v$ and $\tilde{v}$ are the same up to rescaling vectors. Then we can rescale $\tilde{D}$ using the row-scalings $\rho_i$ of the action $\beta$ to produce a difference operator which annihilates $\tilde{v}$, and the modified $\tilde{D}$ still satisfies \eqref{eq_refac}.
\end{proof}

Now we calculate a formula for the pentagram map on $\Tw$. 

\begin{proof}[\new{Proof of Proposition \ref{prop_formulas}}]
Let $u, D, \tilde{D}$ be as in the statement of Theorem \ref{thm_refac}. Let $a, b, c, d$ be the sequences of coordinates of $D$, and let $\tilde{a}, \tilde{b}, \tilde{c}, \tilde{d}$ be the sequences of coordinates of $\tilde{D}$. Let $\sigma \in k^\Z$ be defined by
\[
\sigma_i = \begin{cases} 0 &\text{if $i\ne 6$,} \\ 1 &\text{if $i=6$.} \end{cases}
\]
Applying \eqref{eq_refac} to $\sigma$, we obtain the following system of equations:
\begin{align*}
    \tilde{c}_1 d_3 &= \tilde{d}_1 c_4, \\
    \tilde{c}_3 b_5 + \tilde{a}_3 d_3 &= \tilde{d}_3 a_6 + \tilde{b}_3 c_4, \\
    \tilde{a}_5 b_5 &= \tilde{b}_5 a_6.
\end{align*}
The same equations hold after a change in index, by symmetry. We use the appropriate shifts of the first and last equations to eliminate $\tilde{a}_i$ and $\tilde{d}_i$ from the middle equation, and rearrange to obtain
$$ \frac{\tilde{b}_3}{\tilde{c}_3} =
\frac{\tilde{b}_3}{\tilde{c}_3} = 
\frac{b_5 - \dfrac{d_5 a_6}{c_6}}{c_4 - \dfrac{a_4 d_3}{b_3}}.$$
Taking the reciprocal and multiplying by
$$ \frac{\tilde{a}_4}{\tilde{b}_4} = \frac{a_5}{b_4}$$
gives
$$ \frac{\tilde{a}_4 \tilde{c}_3}{ \tilde{b}_3 \tilde{b}_4 } = \frac{a_5 c_4}{b_4 b_5} \left( \frac{1 - \dfrac{a_4 d_3}{c_4 b_3}}{1 - \dfrac{d_5 a_6}{b_5 c_6}}. \right) $$
Then by Proposition \ref{prop_geoquo},
$$ f^*(x_5) = x_6 \frac{1 - x_5 y_5}{1 - x_7 y_7}.$$
Symmetry gives the formula for $x_i$ in general, and the formula for $y_i$ comes from a similar calculation.
\end{proof}

\section{The Lax Representation} \label{sect_lax}

We now derive a Lax representation for the pentagram map. The Lax representation supplies the invariant functions that define the invariant fibration of the pentagram map.

Informally, a Lax representation is an embedding of the domain of a dynamical system into a space of matrices such that the dynamical system is carried out via matrix conjugation.

\subsection{The idea}

The pentagram map is a discrete algebraic dynamical system
$$ f \colon \Tw \dashrightarrow \Tw,$$
so in our setting, the Lax representation is a space $\Mat_d(k)$ of $d \times d$ matrices over $k$ (for some $d \geq 1$) and a function 
$$T \colon \Tw \dashrightarrow \Mat_d(k)$$
such that, for any $v \in \Dom f$, the matrix $T(f(v))$ is conjugate (similar) to $T(v)$. Then the coefficients of the characteristic polynomial of $T(v)$ are functions on $\Tw$ which are invariant for $f$. One can do even better by identifying a function $P \colon \Tw \dashrightarrow \GL_d(k)$ which carries out the conjugation, that is, such that
$$ T \circ f = P T P^{-1}.$$
This construction supplies up to \new{$d$} invariant functions. More may be obtained from a Lax representation with \emph{spectral parameter.} This construction replaces $\Mat_d(k)$ with $\Mat_d(k(\zeta))$, where $\zeta$ is an indeterminate. Then the characteristic polynomial may be viewed as a bivariate polynomial, so there are more coefficients and thus more invariant functions. Of course, some of these invariant functions may be constants, and they may also be algebraically dependent, so even after a Lax representation with spectral parameter is found, some work is required to establish the existence of a fibration by low-dimensional subvarieties.

Formulas for $T$ and $P$ may be easily checked by carrying out the requisite matrix multiplication. But deriving the right formulas may be very difficult and requires some deeper sense of why the dynamical system is integrable. Soloviev found formulas for an algebraic Lax representation with spectral parameter for the pentagram map in $\Mat_3(\C)$ \cite{MR3161305}.  One hopes for a conceptually motivated proof, and \cite{MR3562255} has almost what we need, but the argument uses coordinates which are only well-defined over $\R$. Trying to push the same argument through over $\C$ results in invariants which are only well-defined up to a third root of unity. In this section, we derive the Lax representation using corner invariants as coordinates, so the argument works over any algebraically closed field $k$. 

Before formally going through the proof, we sketch the idea. A Lax representation (without spectral parameter) can be found by tracking the monodromy of a twisted $n$-gon $v$. The monodromy is a linear transformation, so it does not come with a preferred choice of coordinates. The Lax representation (without spectral parameter) describes the monodromy in a basis defined by the first few vertices of $v$. To introduce a spectral parameter, we observe a property of the pentagram map known as \emph{scaling invariance}. Namely, there is an action of $\Gm$ on $\Tw$, defined by
$$\zeta \cdot (x_1, \hdots, x_n, y_1, \hdots, y_n) = (\zeta^{-1} x_1, \hdots, \zeta^{-1} x_n, \zeta y_1, \hdots, \zeta y_n.)$$
Inspecting the formulas of Proposition \ref{prop_formulas}, we see that the pentagram map commutes with this action. This provides a deformation of the pentagram map, and following through the same construction for the deformation, we obtain matrices with the spectral parameter.

\begin{remark}
Another method of obtaining the $3 \times 3$ Lax representation is to go through the \new{``dual''} $n \times n$ Lax representation found by Izosimov in \cite{MR4430020}. \new{Indeed, writing down the eigenvectors of Izosimov's representation requires a recursive calculation equivalent to \eqref{eqn:defofTiprodLj} below. Thus a proof of algebraic integrability from the $n \times n$ Lax representation would ultimately lead through the same equations, resulting in the same spectral curve. However, there is a conceptual difference between the two approaches. The $3 \times 3$ Lax representation follows more directly from the geometric definition of the pentagram map on twisted polygons, but it does not explain the origins of scaling invariance. Izosimov's method allows for a far-reaching generalization of scaling invariance to a much wider class of generalized pentagram maps.}
\end{remark}

\subsection{The Lax representation via corner invariants}

We let $\zeta$ be a formal indeterminate. 

\begin{defn}
\label{defn:pzcewithsp}
A \emph{projective zero-curvature equation with spectral parameter} is the following data: for each $i \geq 1$, two functions
$$
L_{i} \colon \Dom f  \to \PP(\Mat_3(k(\zeta)))
\quad\text{and}\quad 
P_{i} \colon \Dom f \to \PGL_3(k(\zeta)),
$$
such that, for all $t \in \Z$,
$$L_{i, t+1} = P_{i+1, t} L_{i,t} {P_{i,t}}^{-1}.$$
\end{defn}

\begin{defn}
\label{defn:pLrwsp}
A \emph{projective Lax representation with spectral parameter} is, for each $i \geq 1$, two functions
$$T_{i}, P_{i} \colon \Dom f \to \PP(\Mat_3(k(\zeta)))$$
such that
$$T_{i, t+ 1} = P_{i,t} T_{i,t} {P_{i,t}}^{-1}.$$
In this situation, the functions $T_i$ are collectively called a \emph{projective Lax function with spectral parameter} and the function $P_i$ is called the \emph{associated function.} 
\end{defn}

Now we give explicit formulas for functions $L_i, P_i, T_i$ 
as in Definitions~\ref{defn:pzcewithsp} and~\ref{defn:pLrwsp}
that are associated to the pentagram map. For any $i \in \Z$, we define 
$$L_{i}, P_{i} \colon \Tw \dashrightarrow \PP(\Mat_3(k(\zeta)))$$ by
\begin{align}
\label{eqn:defofLiz}
L_i(\zeta) &= \begin{bmatrix}
1/x_{i+2} & -1/x_{i+2} & 0 \\
1/\zeta & 0 & -1/\zeta \\
y_{i+2} & 0 & 0
\end{bmatrix}, \\
\label{eqn:defofPiz}
P_i(\zeta) &= \begin{bmatrix}
1 - x_{i+2}y_{i+2} & 0 & -(1 - x_{i+2} y_{i+2}) \\
x_{i+1} y_{i+1}(1 - x_{i+2}y_{i+2}) & -(1 - x_{i+1}y_{i+1}) & -(1 - x_{i+2}y_{i+2}) \\
0 & \zeta y_{i+2} (1 - x_{i+3} y_{i+3}) & 0
\end{bmatrix}.
\end{align}

Note that $L_i$ and $P_i$ are $n$-periodic in $i$, since $x_i$ and $y_i$ are $n$-periodic in $i$.
We also introduce an optional time index $t \in \N_{\geq 0}$. We set
\[
L_{i, t} = L_i \circ f^{\circ t},\qquad
P_{i, t} = P_i \circ f^{\circ t},\qquad
T_{i, t} = T_i \circ f^{\circ t}.
\]

Also for any $i \in \Z$, we set
\begin{equation}
\label{eqn:defofTiprodLj}
T_i = L_{n-1+i} \cdots L_{i+1} L_i.
\end{equation}

\begin{thm} \cite[Theorem 2.2]{MR3161305} \label{thm_laxrep}
Suppose that $k$ is an algebraically closed field. Let~$L_i$,~$P_i$, and~$T_i$ be defined, respectively, by~\eqref{eqn:defofLiz},~\eqref{eqn:defofPiz}, and~\eqref{eqn:defofTiprodLj}.
Then we have
\begin{align}
    \label{eq_zero_curv}
    L_{i, t+1} &= P_{i+1, t} L_{i,t} {P_{i,t}}^{-1},\\
    \label{eq_lax}
    T_{i, t + 1} &= P_{i,t} T_{i,t} {P_{i,t}}^{-1}.
\end{align}
Thus the functions $L_i$ and $P_i$ define a projective zero-curvature equation with spectral parameter for the pentagram map, and $T_i$ is a projective Lax function with spectral parameter, with associated function $P_i$.
\end{thm}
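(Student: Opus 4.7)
The plan is to prove the theorem by first reducing the Lax equation \eqref{eq_lax} to the zero-curvature equation \eqref{eq_zero_curv}, then verifying \eqref{eq_zero_curv} by a direct computation using the formulas of Proposition \ref{prop_formulas}. Throughout, all equalities are interpreted in $\PP(\Mat_3(k(\zeta)))$, so they hold only up to a nonzero scalar in $k(\zeta)^\times$.

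For the reduction, we assume \eqref{eq_zero_curv} and substitute each factor of \eqref{eqn:defofTiprodLj} to obtain
\[
T_{i, t+1} = (P_{n+i, t} L_{n-1+i, t} P_{n-1+i, t}^{-1})(P_{n-1+i, t} L_{n-2+i, t} P_{n-2+i, t}^{-1}) \cdots (P_{i+1, t} L_{i, t} P_{i, t}^{-1}).
\]
Consecutive factors $P_{j, t}^{-1}$ and $P_{j, t}$ cancel telescopically, so the right-hand side collapses to $P_{n+i, t}\, T_{i, t}\, P_{i, t}^{-1}$. Since $L_i$ and $P_i$ are $n$-periodic in $i$, we have $P_{n+i, t} = P_{i, t}$, which yields \eqref{eq_lax}.

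For the zero-curvature equation, it suffices to verify $L_i \circ f = P_{i+1}\, L_i\, P_i^{-1}$ as rational maps $\Tw \dashrightarrow \PP(\Mat_3(k(\zeta)))$, since composing with $f^{\circ t}$ then recovers \eqref{eq_zero_curv}. Equivalently, we must produce a nonzero scalar $\lambda_i \in k(\zeta)^\times$ with
\[
(L_i \circ f)\, P_i = \lambda_i\, P_{i+1}\, L_i.
\]
Using Proposition \ref{prop_formulas}, the entries $f^*(x_{i+2})$ and $f^*(y_{i+2})$ appearing in $L_i \circ f$ can be written explicitly as rational functions of the corner invariants $x_j, y_j$ at time $t$. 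Comparing the $(3,1)$ entries of both sides singles out the candidate scalar
\[
\lambda_i = \frac{f^*(y_{i+2})\,(1 - x_{i+2} y_{i+2})}{y_{i+3}\,(1 - x_{i+4} y_{i+4})},
\]
and the remaining eight entries each reduce to a rational identity in the $x_j$, $y_j$, and $\zeta$. Since $\zeta$ enters only through the middle row of $L_i$ and the $(3,2)$ entry of $P_i$, we can separate the check according to the power of $\zeta$, reducing each entry to a low-degree polynomial identity in the corner invariants.

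The main obstacle is the bookkeeping of the $3 \times 3$ matrix products, which collectively involve the ten distinct corner invariants $x_{i+1}, \ldots, x_{i+5}$ and $y_{i+1}, \ldots, y_{i+5}$. The variables $x_{i+5}$ and $y_{i+5}$ enter the left-hand side through $f^*(y_{i+2})$ but are absent from $P_{i+1}\, L_i$; they must therefore be absorbed entirely into the projective scaling factor $\lambda_i$. Once $\lambda_i$ is identified from a single entry, each remaining entry reduces to an elementary rational identity that can be verified by expanding and clearing denominators. No feature of the characteristic of $k$ is used, so the argument works uniformly over any algebraically closed field.
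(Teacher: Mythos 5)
Your telescoping reduction of \eqref{eq_lax} to \eqref{eq_zero_curv} using the $n$-periodicity of $P_i$ is correct and coincides exactly with the final paragraph of the paper's proof. The divergence is in how you handle \eqref{eq_zero_curv}. You propose a brute-force verification that the explicit matrices \eqref{eqn:defofLiz} and \eqref{eqn:defofPiz} satisfy the zero-curvature relation, using the formulas of Proposition~\ref{prop_formulas} as a black box. The paper notes just before the proof that this is precisely Soloviev's approach in \cite{MR3161305}, and explicitly declines to follow it, remarking that ``that approach requires foreknowledge of the hard-to-guess formulas.'' The paper instead introduces the projective transformations $\Lambda_i$, $\Lambda'_i$, $\Pi_i$ defined by 4-arcs, obtains the zero-curvature identity as a tautology about compositions of projective maps \eqref{eqn_prelax}, and then \emph{derives} the formulas \eqref{eqn:defofLiz}--\eqref{eqn:defofPiz} as matrices in a normalized basis, so that the question of where the formulas come from is answered constructively. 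Your route would re-prove the same theorem, but by a different and less illuminating argument; neither route uses any characteristic assumption, so you are right that this part of the proof works over any algebraically closed field.

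There is, however, a concrete gap in your reasoning about the scaling factor that you should address. You assert that $x_{i+5}$ and $y_{i+5}$ ``must therefore be absorbed entirely into the projective scaling factor $\lambda_i$.'' This cannot happen. The only entries of $(L_i \circ f)\,P_i$ that involve $f^*(y_{i+2})$ --- and therefore the only entries that could involve $x_{i+5}, y_{i+5}$ --- are the two nonzero entries in the third row, because the top two rows of $L_i \circ f$ contain only $f^*(x_{i+2})$, $\zeta$, and constants, and $f^*(x_{i+2})$ involves no index beyond $i+4$. If the scalar $\lambda_i$ depended on $x_{i+5}, y_{i+5}$, then $\lambda_i\, P_{i+1} L_i$ would carry that dependence into rows one and two, where the left-hand side has none, and the proposed identity would already fail in row one. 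A successful verification must therefore produce a $\lambda_i$ that is \emph{free} of $x_{i+5}, y_{i+5}$, which means $f^*(y_{i+2})$ must cancel those variables after substitution into the $(3,1)$-entry ratio. Confronting this forces you to examine whether the indexing convention of Proposition~\ref{prop_formulas} (following Izosimov) and the indexing built into \eqref{eqn:defofLiz}--\eqref{eqn:defofPiz} (following Soloviev, who uses the Ovsienko--Schwartz--Tabachnikov formula $f^*(x_i) = x_i(1-x_{i-1}y_{i-1})/(1-x_{i+1}y_{i+1})$) actually agree; the two conventions differ by a shift in $i$. Your sketch does not address this subtlety, and without resolving it the entry-by-entry check you describe will not close. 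The paper's 4-arc derivation sidesteps the whole issue because it never invokes Proposition~\ref{prop_formulas}.
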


\new{This Lax representation appeared as \cite[Theorem 2.2]{MR3161305}, where the proof is the brute-force calculation that the matrix equations \eqref{eq_zero_curv} and \eqref{eq_lax} follow from \eqref{eqn:defofLiz} and \eqref{eqn:defofPiz}. That approach requires foreknowledge of the hard-to-guess formulas \eqref{eqn:defofLiz} and \eqref{eqn:defofPiz}. Below, we give a new proof that leads naturally to these formulas. We first show that equations of the form of\eqref{eq_zero_curv} and \eqref{eq_lax} must hold for some matrices $L_i$ and $P_i$, then deduce the right definitions for $L_i$ and $P_i$ as a consequence.}

\new{This argument generalizes to other pentagram-like maps. In particular, it explains the origin of the complicated formulas in the Lax representation of the 3D pentagram map \cite[Theorem 6.3]{MR3118623}.}

\begin{proof}[Proof of Theorem \ref{thm_laxrep}]
We use the hypothesis that $k$ is algebraically closed to to define the moduli space $\Tw$ of twisted $n$-gons (Theorem \ref{thm_moduli}). Without loss of generality, set $t = 0$.

Suppose that $v \in \Dom f$. We set the notation $v' = f(v)$. 

Define a \emph{4-arc} to be a 4-tuple of points in general position. Given any pair of 4-arcs, there is a unique projective transformation mapping the first to the second. So, we can define projective transformations by specifying the image of a single 4-arc. We call this 4-arc the \emph{source} $4$-arc and we call its image the \emph{target} $4$-arc. Since $v \in \Dom f$, consecutive 4-tuples in $v'$ are 4-arcs.

For each $i \in \Z$, we now define three projective transformations 
$$\Lambda_i, \Lambda'_i, \Pi_i \colon \PP^2 \to \PP^2$$
by
\begin{align*}
{\Lambda_i} \colonequals (v_{i+1}, v_{i+2}, v_{i+3}, v_{i+4}) &\mapsto (v_i, v_{i+1}, v_{i+2}, v_{i+3}),\\
{\Lambda'_i} \colonequals (v'_{i+1}, v'_{i+2}, v'_{i+3}, v'_{i+4}) &\mapsto (v'_i, v'_{i+1}, v'_{i+2}, v'_{i+3}).\\
{\Pi_i} \colonequals (v'_i, v'_{i+1}, v'_{i+2}, v'_{i+3}) &\mapsto (v_i, v_{i+1}, v_{i+2}, v_{i+3}).
\end{align*}
Then
\begin{equation} \label{eqn_prelax}
    \Pi_{i+1}^{-1} \circ \Lambda_i^{-1} = {\Lambda'}_i^{-1} \circ \Pi_i^{-1},
\end{equation}
because both sides are projective transformations such that
$$(v_i, v_{i+1}, v_{i+2}, v_{i+3}) \mapsto (v'_{i+1}, v'_{i+2}, v'_{i+3}, v'_{i+4}).$$

We need to write the equality \eqref{eqn_prelax} in terms of matrices, which requires choosing bases. For each of the transformations $\Lambda_i, \Pi_i, \Lambda'_i$, we write a matrix $L_i, P_i, L'_i$ (respectively) for it, in the (unique) basis such that the given source 4-arc is located at
$$\bigl([1:0:0],\, [0:1:0],\, [0:0:1],\, [1:1:1]\bigr).$$
When we rewrite \eqref{eqn_prelax} as an equality in $\PP(\Mat_3(k))$, we get
\begin{equation} \label{eqn_lax_proof}
 L_{i}^{-1} P_{i+1}^{-1} = P_i^{-1} {L'}_i^{-1}.
\end{equation}
We rearrange \eqref{eqn_lax_proof} to be in the form of a projective zero-curvature equation for the pentagram map:
$$ {L'}_i P_i = P_{i+1} L_{i}.$$
The reversal of composition order is a consequence of the different choice of basis.

Now, we derive formulas for the matrices. For ease of notation, we just compute formulas for $L_1$ and $P_1$, and the rest follow by symmetry.

Define vectors
\[
e_1 = \begin{bmatrix}
1 \\
0 \\
0
\end{bmatrix}, \quad
e_2 = \begin{bmatrix}
0 \\
1 \\
0
\end{bmatrix}, \quad
e_3 = \begin{bmatrix}
0 \\
0 \\
1
\end{bmatrix}, \quad
e_4 = \begin{bmatrix}
1 \\
1 \\
1
\end{bmatrix}.
\]
We wish to write a matrix in the basis $e_1, e_2, e_3$ for the unique projective transformation that takes $e_i$ to a representative $e'_i$ of the corresponding point in the target 4-arc, for each $i = 1,2,3,4,$.
We can do this as follows.
\begin{enumerate}
    \item Calculate any choice of $e'_1, e'_2, e'_3$ in the chosen basis as a function of the coordinates $a_i, b_i, c_i, d_i$.
    \item Find values $\mu_1, \mu_2, \mu_3$ such that, up to rescaling,
    $$\mu_1 e'_1 + \mu_2 e'_2 + \mu_3 e'_3 = e'_4.$$
    If these are not obvious, we can find them by calculating some choice of $e'_4$ in the chosen basis and setting
    \[\begin{bmatrix}
    \mu_1 \\
    \mu_2 \\
    \mu_3
    \end{bmatrix} = 
    \begin{bmatrix}
    | & | & | \\
    e'_1 & e'_2 & e'_3 \\
    | & | & | \\
    \end{bmatrix}^{-1} e'_4.
    \]
    \item By direct check, the following matrix has the desired property:
    \[
    \begin{bmatrix}
    | & | & | \\
    \mu_1 e'_1 & \mu_2 e'_2 & \mu_3 e'_3 \\
    | & | & | \\
    \end{bmatrix}.
    \]
    \item Scale the matrix in Step 3 so that it is written in terms of the corner invariants $x_i, y_i$ instead of the $a_i, b_i, c_i, d_i$. We can ease this computation by choosing $a_1 = b_1 = c_1 = -d_1 = 1$, which is a choice consistent with our lift. We can also choose some, but not all, of the other variables to be $1$, by a rescaling.
\end{enumerate}

First we calculate ${L_1}^{-1}$ and invert it to obtain $L_1$. We can take
\[
e'_1 = \begin{bmatrix}
0 \\
1 \\
0 \\
\end{bmatrix}, \quad e'_2 = \begin{bmatrix}
0 \\
0 \\
1 \\
\end{bmatrix}, \quad e'_3 = \begin{bmatrix}
1 \\
1 \\
1 \\
\end{bmatrix}.
\]
Next we write $v_5$ in terms of $v_2, v_3, v_4$. Since
$$v_5 = - \frac{1}{d_2}(a_2 v_2 + b_2 v_3 + c_2 v_4),$$
we can take
$$\mu_1 = a_2, \quad \mu_2 = b_2, \quad \mu_3 = c_2.$$
Thus, up to scale,
\[
L_1^{-1} = \begin{bmatrix}
0 & 0 & c_2 \\
a_2 & 0 & c_2 \\
0 & b_2 & c_2 \\
\end{bmatrix}.
\]
We want to find a matrix projectively equivalent to this one that is written in terms of corner invariants. From Lemma \ref{lemma_corner_invariants}, we know that the entry with $a_2$ should be transformed to match $x_0$. So, dividing through by $b_2$ and using the known values $a_1 = b_1 = c_1 = 1$ and $d_1 = -1$, we obtain
\[
\begin{bmatrix}
0 & 0 & - \dfrac{c_1 c_2}{d_1 b_2} \\[4\jot]
\dfrac{a_2 c_1}{b_1 b_2} & 0 & - \dfrac{c_1 c_2}{d_1 b_2} \\[4\jot]
0 & 1 & - \dfrac{c_1 c_2}{d_1 b_2} \\
\end{bmatrix}.
\]
Thus
\[
{L_1}^{-1} = \begin{bmatrix}
0 & 0 & -1/y_3 \\
x_3 & 0 & -1/y_3 \\
0 & 1 & -1/y_3
\end{bmatrix}.
\]
We follow the same steps for ${P_1}^{-1}$. This time we get
\[
e'_1 = \begin{bmatrix}
b_0 \\
0 \\
d_0
\end{bmatrix}, \quad e'_2 = \begin{bmatrix}
1 \\
0 \\
1 \\
\end{bmatrix}, \quad e'_3 = \begin{bmatrix}
-c_2 \\
-c_2 + a_2 \\
-c_2
\end{bmatrix},
\]
\begin{align*}
    \mu_1 &= (a_3 d_2 - c_3 b_2), \\
    \mu_2 &= -(a_3 d_2 - c_3 b_2) b_0, \\
    \mu_3 &= - (d_0 - c_0 b_0) c_3.
\end{align*}

Thus, up to scale,
\[
P_1^{-1} =
\begin{bmatrix}
b_0 (a_3 d_2 - c_3 b_2) & -(a_3 d_2 - c_3 b_2) b_0 & c_2 c_3(d_0 - c_0 b_0) \\
0 & 0 & (a_2 - c_2)(d_0 - c_0 b_0)c_3 \\
d_0 (a_3 d_2 - c_3 b_2) & -(a_3 d_2 - c_3 b_2) b_0 & c_2 c_3 (d_0 - c_0 b_0)
\end{bmatrix}.
\]
Dividing through by $b_0 b_2 c_3$ and making the convenient choice $c_0 = 1$ gives us the desired form,
\[
P_1^{-1} = \begin{bmatrix}
-(1 - x_4 y_4) & 1 - x_4 y_4 & \frac{1}{y_3}(1 - x_2 y_2) \\
0 & 0 & -\frac{1}{y_3}(1 - x_2 y_2)(1 - x_3 y_3) \\
- x_2 y_2 (1 - x_4 y_4) & 1 - x_4 y_4 & \frac{1}{y_3} (1 - x_2 y_2)
\end{bmatrix}.
\]
Taking projective inverses and using symmetry, we deduce the formulas
\[
L_i = \begin{bmatrix}
1/x_{i+2} & -1/x_{i+2} & 0 \\
1 & 0 & -1 \\
y_{i+2} & 0 & 0
\end{bmatrix},
\]
\[
P_i = \begin{bmatrix}
1 - x_{i+2}y_{i+2} & 0 & -(1 - x_{i+2} y_{i+2}) \\
x_{i+1} y_{i+1}(1 - x_{i+2}y_{i+2}) & -(1 - x_{i+1}y_{i+1}) & -(1 - x_{i+2}y_{i+2}) \\
0 & y_{i+2} (1 - x_{i+3} y_{i+3}) & 0
\end{bmatrix}.
\]

The last step to prove the zero-curvature equation is to use the scaling invariance of the pentagram map to bring in a spectral parameter. The map
$$x_i \mapsto x_i/\zeta, \quad y_i \mapsto \zeta y_i,$$ 
commutes with the pentagram map. This map is an action of $\Gm$ on $\Tw$. The matrices appearing in \eqref{eqn_lax_proof} are functions of $\Tw$, but we can extend them to functions of $\Tw \times \Gm$ by setting $L_i(v, \zeta) = L_i(\zeta \cdot v)$ and $P_i(\zeta, v) = P_i(\zeta \cdot v)$.
The projective zero-curvature equation \eqref{eqn_lax_proof} still holds for these matrices because the pentagram map commutes with the $\Gm$-action. Suppressing the notational dependence on $v$, and rescaling the matrices by $\zeta$ or $1/\zeta$ as needed to get simpler formulas, we obtain the Lax representation with spectral parameter appearing in the theorem statement.

Finally, we use the zero-curvature equation to get the Lax equation. Since $P_i$ is $n$-periodic in $i$,
\begin{align*}
T_{i, t + 1} P_{i,t} &= L_{i + n - 1, t + 1} \hdots L_{i + 1, t + 1} L_{i, t+ 1} P_{i,t} \\
&= L_{i +  n - 1, t + 1} \hdots L_{i + 1, t + 1} P_{i + 1, t} L_{i, t} \\
& \vdots \\
&= P_{i + n, t} L_{i + n - 1, t} \hdots L_{i,t} \\
&= P_{i + n, t} T_{i, t} \\
&= P_{i, t} T_{i,t}.
\end{align*}
This conclude the proof of Theorem~\ref{thm_laxrep}.
\end{proof}

\begin{remark}
The Lax representation found by Soloviev differs from ours by some signs. The matrix in place of our $L_i$ is
\[
\begin{bmatrix}
1/x_{i+2} & -1/x_{i+2} & 0 \\
1/\zeta & 0 & 1/\zeta \\
-y_{i+2} & 0 & 0 \\
\end{bmatrix} = {\begin{bmatrix}
0 & 0 & -1/y_{i+2} \\
-x_{i+2} & 0 & -1/y_{i+2} \\
0 & \zeta & 1/y_{i+2}
\end{bmatrix}}^{-1},
\]
and the matrix in place of our $P_i(z)$ is
\[
\begin{bmatrix}
1 - x_{i+2}y_{i+2} & 0 & 1 - x_{i+2}y_{i+2} \\
x_{i+1}y_{i+1}(1 - x_{i+2}y_{i+2}) & 1 - x_{i+1}y_{i+1} & 1 - x_{i+2}y_{i+2} \\
0 & - \zeta y_{i+2}(1 - x_{i+3} y_{i+3}) & 0
\end{bmatrix}.
\]
This can be explained by a change of basis by $\diag(1,1,-1)$, so there is essentially no difference between the formulas.
\end{remark}

For an affine Lax representation, the natural next step is to deduce that the coefficients of the characteristic polynomial are invariants. We have to modify this step slightly because our Lax representation is projective.

Let $\Id_3$ denote the $3 \times 3$ identity matrix. Choose a nonzero representative $\hat{T}_0(\zeta)$ for $T_0(\zeta)$ in $\Mat_3(k[\zeta^{\pm 1}])$. We may view $\det (\lambda \Id_3 - \hat{T}_0(\zeta))$ as an element of $k[\Tw][\lambda, \zeta^{\pm 1}]$. The resulting expression is of the form
$$ \lambda^3 - \gamma_1(\zeta) \lambda^2 + \gamma_2(\zeta) \lambda - \gamma_3(\zeta).$$
The rescaling $\hat{T}_0 \mapsto l \hat{T}_0$ induces
the rescaling 
$$l \cdot (\gamma_1, \gamma_2, \gamma_3) = (l \gamma_1, l^2 \gamma_2, l^3 \gamma_3).$$
So, the coefficients of the $\gamma_i$ are functions on $\Tw$ such that the scaling class of the triple $(h_1, h_2, h_3)$ is invariant for the pentagram map. We can eliminate negative powers of $\zeta$ by multiplying through by $\zeta^n$. Then we can normalize by scaling so that $\gamma_1(\zeta)$ becomes monic. This puts the expression in the form
$$Q(\lambda, \zeta) \in k(\Tw)[\lambda, \zeta].$$
Since $Q$ is invariant under the pentagram map as a formal expression, the coefficients of $Q$ are rational functions on $\Tw$ which are invariant for the pentagram map. We would like to say which terms appear in $Q$. This may be done with an inductive calculation using the explicit formula for $L_i$; we refer to \cite[Prop. 5.3]{MR3102478} for the argument, and just state the end result.

\begin{cor} \label{cor_pentagram_invariants}
Set the notation $m = \floor{n/2}$. Then, for some $H_i \in k(\Tw)$ for $1 \leq i \leq 2m + 2$, we have
$$ Q(\lambda,\zeta) = \lambda^3 \zeta^n + \sum_{i = 0}^{m-1} H_{1+i} \lambda^2 \zeta^{n+i-m} - \lambda^2 \zeta^n + \sum_{i = 0}^m H_{m+i+1} \lambda \zeta^{m - i} - H_{2m+2}.$$
Let 
$$S = \A^{2m + 2}_k = \Spec k[h_1, \hdots, h_{2m+2}].$$ We define
$$H \colon \Tw \dashrightarrow S,$$ 
$$v \mapsto  (H_1(v), \hdots, H_{2m + 2}(v)).$$
The coefficients $H_1, \hdots, H_{2m+2}$ in the expression $Q(\lambda,\zeta)$ are rational functions on $\Tw$ that are invariants of the pentagram map. Thus the fibers of $H \colon \Tw \dashrightarrow S$ are invariant for the pentagram map.
\end{cor}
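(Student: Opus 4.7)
The proof has two parts: (1) showing that the indicated coefficients $H_1, \hdots, H_{2m+2}$ are well-defined invariants of the pentagram map, and (2) verifying the explicit monomial structure of $Q(\lambda, \zeta)$. My plan is to dispatch (1) directly from Theorem \ref{thm_laxrep} and to reduce (2) to an inductive degree-tracking computation in the product $T_0 = L_{n-1} \cdots L_0$.

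For the invariance in (1), I would lift $T_0$ and $P_0$ to matrices $\hat T_0, \hat P_0 \in \Mat_3(k(\zeta))$ and use the projective Lax equation of Theorem \ref{thm_laxrep} to conclude that there exists a scalar $c \in k(\zeta)^{\times}$ with $\hat T_{0,t+1} = c \cdot \hat P_{0,t} \hat T_{0,t} \hat P_{0,t}^{-1}$. Computing the characteristic polynomial $\det(\lambda \Id_3 - \hat T_0) = \lambda^3 - \gamma_1 \lambda^2 + \gamma_2 \lambda - \gamma_3$ and applying $f$ then sends $(\gamma_1, \gamma_2, \gamma_3) \mapsto (c\gamma_1, c^2 \gamma_2, c^3 \gamma_3)$. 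Multiplying through by $\zeta^n$ and fixing the normalization of the $\lambda^3$ coefficient to be $\zeta^n$ completely rigidifies this rescaling ambiguity, so the coefficients of the resulting $Q(\lambda,\zeta) \in k(\Tw)[\lambda, \zeta]$ are rational functions on $\Tw$ pulled back trivially by $f$.

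For the structural claim in (2), my approach is to decompose
\[
L_i(\zeta) = A_i + \zeta^{-1} B_i,
\]
where $A_i$ has entries with no $\zeta$ dependence and $B_i$ collects the $\zeta^{-1}$ entries. A direct inspection shows $B_i$ has rank one (its image lies in the span of $e_2$, its kernel contains two of the standard basis vectors), while $A_i$ has rank two (its row $2$ vanishes). Expanding $T_0 = L_{n-1} \cdots L_0$ as a sum indexed by subsets $S \subseteq \{0, \hdots, n-1\}$ with factor $\zeta^{-|S|}$ and either an $A_j$ or a $B_j$ in each position, the rank-one structure of the $B_i$'s forces many summands with $|S|$ large to collapse. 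Combined with the fact that consecutive $A$'s and $B$'s interact in a controlled way, this constrains the $\zeta$-degrees appearing in the entries of $T_0$, and hence in the coefficients $\gamma_j(\zeta)$ of its characteristic polynomial. After clearing denominators by $\zeta^n$ and normalizing, one recovers precisely the claimed range $\zeta^{n+i-m}$ for the $\lambda^2$ coefficients and $\zeta^{m-i}$ for the $\lambda$ coefficients, together with the cancellation producing the $-\lambda^2 \zeta^n$ term.

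The main obstacle is the bookkeeping in (2): one must verify both which monomials have a nonzero coefficient and that the leading $\lambda^2 \zeta^n$ coefficient equals exactly $-1$ so that the cancellation with the subtracted term is visible. The cleanest way to package this is by an induction on $n$ for the partial products $L_{k} \cdots L_0$, keeping track at each step of a small list of invariants (the rank profile of the $\zeta^{-j}$-parts of the product, and the degrees in $\zeta$ of the traces and $2\times 2$ minors). This is essentially the argument given in \cite[Prop.~5.3]{MR3102478}, which transcribes verbatim into our setting since the $L_i$ matrices we derived in \eqref{eqn:defofLiz} coincide with theirs up to the change of basis recorded in the remark after Theorem \ref{thm_laxrep}.
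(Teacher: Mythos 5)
Your part (1) has a genuine error in the normalization step. You claim that ``fixing the normalization of the $\lambda^3$ coefficient to be $\zeta^n$ completely rigidifies this rescaling ambiguity.'' But $\det(\lambda \Id_3 - \hat T_0)$ is automatically monic in $\lambda$ regardless of which scalar multiple $\hat T_0$ you choose, so after multiplying through by $\zeta^n$ the $\lambda^3$ coefficient is identically $\zeta^n$. Requiring it to equal $\zeta^n$ therefore imposes no condition and leaves the weighted rescaling $(\gamma_1, \gamma_2, \gamma_3) \mapsto (c\gamma_1, c^2 \gamma_2, c^3 \gamma_3)$ completely unconstrained; the resulting coefficients would still only be $f$-invariant up to this scaling, not as honest rational functions on $\Tw$. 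What actually rigidifies the ambiguity is normalizing so that $\gamma_1(\zeta)$ becomes monic --- equivalently, pinning the $\lambda^2 \zeta^n$ coefficient of $Q$ to $-1$, which is exactly the explicit $-\lambda^2 \zeta^n$ term in the statement. Since $\gamma_1$ genuinely transforms under the scalar, this normalization forces $c=1$ after applying $f$, and then the remaining coefficients $H_1, \ldots, H_{2m+2}$ are honest invariants. Replacing $\lambda^3$ by $\lambda^2$ in your sentence repairs the argument.

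Part (2) follows the same route the paper takes: the paper likewise defers the degree bookkeeping to \cite[Prop.~5.3]{MR3102478}, and your $L_i = A_i + \zeta^{-1}B_i$ rank-tracking sketch is a plausible outline of that computation. One small mis-attribution: the remark you invoke after Theorem \ref{thm_laxrep} records the change of basis between our $L_i$ and Soloviev's matrices from \cite{MR3161305}, not those of \cite{MR3102478}, so the claimed ``verbatim transcription'' needs its own (short) justification.
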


\section{The Geometry of the Spectral Curve} \label{sect_spectral_curve}

In Corollary \ref{cor_pentagram_invariants}, we described a bivariate polynomial $Q(\lambda,\zeta)$ with coefficients $H_1, \hdots, H_{2m+2}$ which are rational functions on $\Tw$ and invariant for the pentagram map. In this section, we view $Q(\lambda,\zeta)$ as the equation of a curve. 

We continue with the notation of Corollary \ref{cor_pentagram_invariants}. Set homogeneous coordinates $[X:Y:Z]$ on $\PP^2_{S}$. When we dehomogenize, we denote the corresponding remaining coordinates by the lowercase letters $x, y, z$. Thus if we dehomogenize by setting $Z = 1$, we write $x = X/Z$ and $y = Y/Z$.
(These are not to be confused with the corner invariants $x_i, y_j$.)

\begin{defn} \label{def_spec_curve}
For each $n \geq 4$, we define the \emph{$n$-th spectral curve} to be the relative curve $\Gamma \to S$ in $\PP^N_S$ cut out by the homogeneous polynomial
\begin{align*}
    Q(X,Y,Z) &= X^3 Y^n + \sum_{i = 0}^{m-1} h_{i+1} X^2 Y^{n+i-m} Z^{1+m-i} - X^2 Y^n Z \\
    &+ \sum_{i = 0}^m h_{m+i+1} X Y^{m - i} Z^{n-m+2+i} - h_{2m+2}Z^{3+n}.
\end{align*}
We may alternately view the relative curve $\Gamma \to S$ as a family of curves indexed by $S$.
\end{defn}

The spectral curve describes the eigenvalues in an algebraic 1-parameter family of linear maps, since $Q(\lambda,\zeta,1)$ equals the quantity $Q(\lambda, \zeta)$ described in Corollary \ref{cor_pentagram_invariants}.

Because the defining polynomial of the spectral curve is invariant for the pentagram map, we can think of the spectral curve itself as an invariant. In this section, we prove some geometric properties of the spectral curve in order to be able to describe its Jacobian. The results are collected in Proposition \ref{prop_sings} and Theorem \ref{thm_spec_curve_genus}.

The spectral curve was defined in (in a slightly different form) by Soloviev in \cite{MR3161305} to prove complex integrability over $\C$; in this section, we extend that argument to any algebraically closed field $k$, where $\charac k \neq 2$. The argument proceeds by desingularizing a family of curves and computing the genus of the generic fiber with the Riemann-Hurwitz formula. Directly computing the genus of the generic fiber is difficult, so we compute the genus of a carefully chosen special fiber, and then show that this special fiber has the same genus as the generic fiber. As discussed in Section \ref{sect_related_work}, our special fiber calculation fills a gap in \cite{MR3161305}.

Basic references for these techniques are as follows. For the resolution of curve singularities, and the basic theory of zeroes and poles of rational functions on singular varieties, see \cite[Ch. 1]{MR2289519}. For the basic theory of the Riemann-Hurwitz formula in arbitrary characteristic, see \cite[Ch. 4.2]{MR0463157} or \cite[Ch. 2]{MR817210}.

By \emph{curve}, we mean a projective, possibly singular algebraic variety of dimension 1 over an algebraically closed field. 

We prove various properties of the fibers of $\Gamma \to S$ which hold on a Zariski dense subset of $S$. Note that for any particular fiber, we may change coordinates to obtain a curve with affine equation of the form
$$ R(x,y) = x^3 y^n - \sum_{i = 0}^m J_i x^2 y^{n+i-m} + \sum_{i = 0}^m I_i x y^{m - i} - 1,$$
where $I_0, \hdots, I_m, J_0, \hdots, J_m \in k.$
This simpler form is unique up to a choice of third root of unity. We set 
$$S' = \Spec k[I_0, \hdots, I_m, J_0, \hdots, J_m].$$
For short, we write $k[I, J]$ to mean $k[I_0, \hdots, I_m, J_0, \hdots, J_m].$
We define a family of curves $\Gamma' \to S'$ in $\PP^2_{S'_n}$ by the homogeneous equation
$$R(X,Y,Z) = X^3 Y^n - \sum_{i=0}^m J_i X^2 Y^{n+i-m} Z^{1+m-i} + \sum_{i=0}^m I_i X Y^{m-i} Z^{n-m+2+i} - Z^{n+3} = 0.$$
A property of the fibers of $\Gamma \to S$ is said to hold \emph{generically} if, for all geometric points $s$ in some Zariski dense open subset of $S$, the property is true for $\Gamma_s$. Properties of curves which are stable under birational maps and which hold generically for fibers of $\Gamma'$ also hold generically for fibers of $\Gamma$.

We refer to the affine plane defined by $Z \neq 0$ as the \emph{main affine patch}.

For any point $s \in S'$, we consider the special fiber $\Gamma'_s \to \Spec k$. The geometry of the curve $\Gamma'_s$ varies with $s$; depending on the choice of $s$, the special fiber may be reducible, non-reduced, or have worse singularities and thus a lower genus than expected. The next theorem describes the geometry of $\Gamma'$.

\begin{prop} \label{prop_sings} \leavevmode
\begin{enumerate}
    \item For all $s \in S'$, the only points of $\Gamma'_s$ outside the main affine patch are $[1:0:0]$ and $[0:1:0]$, and these points are singular.
    \item Table \ref{table_resolution_of_singularities}
    describes a map $\Gamma'' \to \Gamma'$. For generic $s \in S'$, the corresponding map $\Gamma_s'' \to \Gamma'_s$ is a resolution of the singularities of $\Gamma'_s$ at $[1:0:0]$ and $[0:1:0]$ by point blowups.
    \item Let
    $$\nsrhmap  \colon\Gamma_s'' \to \PP^1$$
    be the map induced by
    $$\Gamma' \to \PP^1,$$
    $$[X:Y:Z] \to [Y:Z].$$
    The image and ramification indices of the geometric points above $[1:0:0]$ and $[0:1:0]$ are recorded in Table \ref{table_resolution_of_singularities}.
\end{enumerate}
\end{prop}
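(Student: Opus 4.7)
For part (1), the plan is a direct calculation: setting $Z=0$ in the defining polynomial $R(X,Y,Z)$ kills every term except $X^3 Y^n$, since every other monomial has $Z$-degree at least $1$. Hence points at infinity on any fiber $\Gamma'_s$ satisfy $X^3 Y^n = 0$, leaving only $[1:0:0]$ and $[0:1:0]$. To check singularity, pass to the affine chart $Y=1$ around $[0:1:0]$: inspecting total degrees in $(X,Z)$ shows that the only terms of degree $\leq 3$ are $X^3$ and $-J_m X^2 Z$, so the tangent cone is $X^2(X - J_m Z)$ and the multiplicity is $3$. In the chart $X=1$ around $[1:0:0]$, the term of lowest total degree in $(Y,Z)$ is $Y^n$ (all other terms have total degree $> n$), giving multiplicity $n$. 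Both exceed $1$, so the points are singular.

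For part (2), the plan is to carry out an explicit sequence of blowups at each of the two points, organized so that the branches and their local parameters can be read off from Table \ref{table_resolution_of_singularities}. At $[0:1:0]$, the factor $(X - J_m Z)$ of the tangent cone corresponds (for generic $s$, i.e.\ $J_m \neq 0$) to a smooth branch that separates after a single blowup; the remaining doubled factor $X^2$ yields a branch along $X = 0$ which, once we substitute $X = X_1 Z$ and divide by $Z^3$, reveals its structure from the next lowest-order term, namely the piece of the sum $\sum I_i X Y^{m-i} Z^{n-m+2+i}$ with $i=m$. This gives a second round of blowups that resolves the remaining singularity into smooth branches. At $[1:0:0]$, the multiplicity-$n$ tangent cone $Y^n = 0$ forces an iterated sequence of substitutions $Y = Y_1 Z^a$ (equivalently, computing the Newton polygon of $R(1,Y,Z)$), which produces branches indexed by the slopes of the Newton polygon. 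Generic nonvanishing of the coefficients $I_i, J_i$ ensures that at each stage, the arising curve is either smooth or factors into transverse branches, so the procedure terminates and gives a resolution of $\Gamma'_s$.

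For part (3), once the resolution is in hand, the ramification indices of $\zeta$ follow mechanically. In the chart $Y=1$, the map $\zeta$ is $[X:1:Z] \mapsto [1:Z]$, so the local coordinate on $\PP^1$ near $[1:0]$ is $Z$; for each branch over $[0:1:0]$, we read off the $Z$-valuation of a local parameter to get the ramification index. In the chart $X = 1$, the map is $[1:Y:Z] \mapsto [Y:Z]$, and the image of a branch over $[1:0:0]$ depends on the limit of $Y/Z$ along that branch. For a branch parametrized locally by $Z = t^e$, $Y = c t^f + \cdots$, the image in $\PP^1$ is $[c t^{f-e} + \cdots : 1]$ or $[1 : c^{-1} t^{e-f} + \cdots]$, and the ramification index equals $|f - e|$. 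The slopes of the Newton polygon from part (2) determine exactly these exponents, which is how the tabulated values arise.

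The main obstacle is the iterated resolution at $[1:0:0]$: the multiplicity there is $n$, which grows with $n$, so the number of blowups and the branch structure depend on the parity of $n$ and on arithmetic relations between the exponents $n, m+1, n-m+2, n+3$ appearing in the defining polynomial. Keeping track of the right genericity conditions (nonvanishing of specific subresultants in the $I_i, J_i$) so that no extra coincidences occur among the branches, and verifying that the Newton polygon has the claimed slopes, is where the bulk of the computation lies. The payoff is that once Table \ref{table_resolution_of_singularities} is justified, it directly feeds into the Riemann-Hurwitz calculation of the genus of $\Gamma''_s$ in Theorem \ref{thm_spec_curve_genus}.
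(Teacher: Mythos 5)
Your plan follows essentially the same route as the paper: set $Z=0$ to find the points at infinity, resolve the two singularities by point blowups encoded as monomial substitutions (i.e., the Newton polygon), and read off the ramification indices of $\nsrhmap$ from the resulting local equations. The only cosmetic differences are that the paper detects singularity via the Jacobian criterion rather than the tangent cone, and computes ramification by the length of a local module rather than via Puiseux-type parametrizations; like the paper, you defer the bulk of the blowup bookkeeping to an explicit tabulation.
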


\begin{table}[p] 
\renewcommand{\arraystretch}{1.2}
\centering
\small
\begin{tabular}{c|c}
    \hline
    \multicolumn{2}{|c|}{$P = [1:0:0]$, \quad $n$ odd} \\
    \hline
     Coordinates & $y = \hat{y}^{m+1} \hat{z}, \quad
        z = \hat{y}^{m} \hat{z}$ \\
     Equation & $0 = \hat{y} - J_0 \hat{y} \hat{z} - \hdots - J_m \hat{y}^{m+1} \hat{z} + I_0 \hat{y}^m \hat{z}^2 + \hdots + I_m \hat{z}^2 - \hat{y}^m \hat{z}^3$ \\
     Points above $P$ & $O_2: (\hat{y}, \hat{z}) = (0,0)$ \\
     Image by $\nsrhmap$ & $[0:1]$ \\
     Ramification & 2 \\
    \hline
    \multicolumn{2}{|c|}{$P = [0:1:0]$, \quad $n$ odd} \\
    \hline
     Coordinates & $x = \hat{x}, \quad z = \hat{x} \hat{z}$. \\
     Equation & $0 = 1 - J_0 \hat{x}^m \hat{z}^{m+1} - \hdots - J_m \hat{z}
        + I_0 \hat{x}^{n-m}  \hat{z}^{2+n-m} + \hdots + I_m \hat{x}^n \hat{z}^{2+n} - \hat{x}^n \hat{z}^{3+n}$ \\
     Points above $P$ & $W_1: (\hat{x}, \hat{z}) = (0,1/J_m)$ \\
     Image by $\nsrhmap$ & $[1:0]$ \\
     Ramification & 1 \\
    \hline
     Coordinates & $x = \hat{x} \hat{z}^{m+1}, \quad z = \hat{z}$. \\
     Equation & $0 = \hat{x}^3 \hat{z}^m - J_0 \hat{x}^2 \hat{z}^m - \hdots - J_m \hat{x}^2
        + I_0 \hat{x} \hat{z}^{n-m} + \hdots + I_m \hat{x} \hat{z}^{n-m} - \hat{z}$ \\
     Points above $P$ & $W_2: (\hat{x}, \hat{z}) = (0,0)$ \\
     Image by $\nsrhmap$ & $[1:0]$ \\
     Ramification & 2 \\
    \hline
    \hline
    
    \multicolumn{2}{|c|}{$P = [1:0:0]$, \quad $n$ even} \\
    \hline
     Coordinates & $y = \hat{y}^{m+1} \hat{z}$, \quad $z = \hat{y}^{m} \hat{z}$ \\
     Equation & $0 = 1 - J_0 \hat{z} - \hdots - J_m \hat{y}^m \hat{z}
        + I_0 \hat{y}^m \hat{z}^2 + \hdots + I_m \hat{z}^2 - \hat{y}^m \hat{z}^3$ \\
     Points above $P$ & $O_2, O_3: (\hat{y}, \hat{z}) = (0,\hat{z}_0)$ for each of the two roots $\hat{z}_0$ of $I_m {\hat{z}_0}^2 - J_0 \hat{z}_0 + 1$. \\
     Image by $\nsrhmap$ & $[0:1]$ \\
     Ramification & 1 \\
    \hline
    \multicolumn{2}{|c|}{$P = [0:1:0]$, \quad $n$ even} \\
    \hline
     Coordinates & $x = \hat{x}, \quad z = \hat{x} \hat{z}$ \\
     Equation & $0 = 1 - J_0 \hat{x}^m \hat{z}^{m+1} - \hdots - J_m \hat{z} + I_0 \hat{x}^{n-m}  \hat{z}^{2+n-m} + \hdots + I_m \hat{x}^n \hat{z}^{2+n} - \hat{x}^n \hat{z}^{3+n}$ \\
     Points above $P$ & $W_1: (\hat{x}, \hat{z}) = (0,1/J_m)$ \\
     Image by $\nsrhmap$ & $[1:0]$ \\
     Ramification & 1 \\
    \hline
     Coordinates & $x = \hat{x} \hat{z}^{m+1}, \quad z = \hat{z}$. \\
     Equation & $0 = \hat{x}^3 \hat{z}^m - J_0 \hat{x}^2 \hat{z}^m - \hdots - J_m \hat{x}^2 + I_0 \hat{x} + \hdots + I_m \hat{x} \hat{z}^m - 1$ \\
     Points above $P$ & $W_2, W_3: (\hat{x}, \hat{z}) = (\hat{x}_0,0)$ for each of the two roots $\hat{x}_0$ of $J_m {\hat{x}_0}^2 - I_0 \hat{x}_0 + 1 = 0$. \\
     Image by $\nsrhmap$ & $[1:0]$ \\
     Ramification & 1 \\
\end{tabular}
\caption{A map $\Gamma'' \to \Gamma'$ generically resolving the singularities above $[1:0:0]$ and $[0:1:0]$, given in terms of coordinate changes and local equations. We also record the geometric points above the singularities, and their images and ramification by the map $\nsrhmap$.}
\label{table_resolution_of_singularities}
\end{table}

\begin{proof}
\par(1)\enspace To find the points that are outside the main affine patch, set $Z = 0$ and solve for $X$ and $Y$. The points $[1:0:0]$ and $[0:1:0]$ are singular by the Jacobian criterion for smoothness.
\par(2)\enspace 
We start by desingularizing $[1:0:0]$. Dehomogenizing by $X = 1$, we obtain the equation
        \begin{align*}
            R(1,y,z) = y^n &- J_0 y^{n-m} z^{m+1} - \hdots - J_m y^n z \\ &+ I_0 y^m z^{2+n-m} + \hdots + I_m z^{2+n} - z^{3+n}.
        \end{align*}
        In the $yz$-plane, the singularity is at $(0,0)$, so we have prepared it for blowing up. We treat $[0:1:0]$ similarly. We use the standard algorithm for blowing up a point singularity in the plane, and the results are as shown in Table \ref{table_resolution_of_singularities}. 
        In characteristic 2, we have used the separability of the polynomials $I_m \alpha^2 - J_0 \alpha + 1$ and $J_m \alpha^2 - I_0 \alpha + 1$ to justify that there are two roots.
\par(3)\enspace The images $\nsrhmap(P)$ and ramification indices of $\nsrhmap$ may be determined from the local equations in Table \ref{table_resolution_of_singularities}. For example, for the point above $[1:0:0]$ when $P$ is odd, the map $\nsrhmap$ is $\hat{y}$ in the given coordinates. The vanishing order of $\hat{y}$ is the length of the module $$\frac{k(I,J)[\hat{y}, \hat{z}]_{(\hat{y}, \hat{z})}}{\langle \hat{y}, \hat{y} - J_0 \hat{y}\hat{z} - \hdots - J_m \hat{y}^{n-m} \hat{z} + I_0 \hat{y}^m \hat{z}^2  + \hdots + I_m \hat{z}^2 - \hat{y}^m \hat{w}^3 \rangle} \cong    \frac{k(I,J)[\hat{z}]_{(\hat{z})}}{\langle I_m \hat{z}^2 \rangle}.$$
        Evidently this module has length 2, so $\hat{y}$ vanishes to order 2. So $\nsrhmap$ has a zero there, and the ramification index is 2.
\end{proof}

\begin{defn} \label{def_special_sections}
We give names to certain geometric points on $\Gamma''$. These may be thought of as sections of $\Gamma'' \to S$ in the case of points defined over $S$, or multisections in the case of points not defined over $S$.
\begin{itemize}
    \item \boxnodd: We name the preimages of $[0:1]$ by $\nsrhmap$ as follows. The point for which $Z \neq 0$ is $O_1$, and the point above $[1:0:0]$ is $O_2$. The points above $[0:1:0]$ are $W_1$ and $W_2$, as defined in Table \ref{table_resolution_of_singularities}. (Note that $O_1$ is not in Table \ref{table_resolution_of_singularities} because it does not lie above the line $Z = 0$.)
    \item \boxneven: There are three geometric points which are preimages of $\nsrhmap$. The point where $Z \neq 0$ is $O_1$, and the points above $[1:0:0]$ are $O_2, O_3$. The points $O_2$ and $O_3$ are not defined over the base $S'$. Fixing an algebraic closure of $k(I,J)$ allows us to give the names $O_2$ and $O_3$ to the points above $[1:0:0]$. The points $W_1, W_2, W_3$ are defined similarly according to Table \ref{table_resolution_of_singularities}, and again $W_2$ and $W_3$ are not defined over the base $S'$.
\end{itemize}
\end{defn}

\begin{thm} \label{thm_spec_curve_genus}
Assume $\charac k \neq 2$. For generic $s \in S$, the curve $\Gamma_s$ over $k$ is integral, and has geometric genus
\[
g(\Gamma_s) = \begin{cases}
n - 1, & n \text{ odd,}\\
n - 2, & n \text{ even.}
\end{cases}
\]
\end{thm}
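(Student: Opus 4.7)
The approach is to compute the geometric genus of a generic $\Gamma''_s$ by applying the Riemann--Hurwitz formula to the degree-$3$ projection $\nsrhmap \colon \Gamma''_s \to \PP^1$, and to secure the count through a one-point calculation at a convenient $s_0 \in S'(k)$. The hypothesis $\charac k \neq 2$ enters twice: it ensures that index-$2$ ramification is tame, so the elementary form of Riemann--Hurwitz applies, and it keeps the two quadratic polynomials from Table~\ref{table_resolution_of_singularities} in the even $n$ rows generically separable, so that the tabulated picture at infinity is realized.

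\textbf{Ramification count.} Since $R$ is cubic in $X$, the map $\nsrhmap$ has degree $3$. By Proposition~\ref{prop_sings} and Table~\ref{table_resolution_of_singularities}, the contribution of points above $\infty \in \PP^1$ to $\sum_P (e_P - 1)$ equals $2$ when $n$ is odd (from $O_2$ and $W_2$, each of index $2$) and $0$ when $n$ is even. The affine ramification is governed by the discriminant $\Delta(y) = \mathrm{disc}_x R(x, y) \in k[I,J][y]$. An explicit expansion of the cubic-discriminant formula shows that, generically, $\deg_y \Delta = 3n$ with leading coefficient a nonzero scalar multiple of $J_m^3$, while $\mathrm{mult}_{y=0} \Delta = n$ with the coefficient of $y^n$ a nonzero scalar multiple of $I_m^3$. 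The factor $y^n$ arises because two sheets of the cover escape to infinity as $y \to 0$, precisely the preimages of $[1:0:0]$ already tabulated as $O_2$ (odd $n$) or $O_2, O_3$ (even $n$). Hence the honest affine ramification is carried by $\tilde\Delta(y) \colonequals \Delta(y)/y^n$, of degree $2n$; if $\tilde\Delta$ is separable with each root corresponding to a double (not triple) root of $R(x, y_0)$, tame Riemann--Hurwitz yields
\[
2 g(\Gamma''_s) - 2 \;=\; -6 \;+\; 2n \;+\; \varepsilon(n),
\]
where $\varepsilon(n) = 2$ in the odd case and $0$ in the even case, giving $g = n-1$ (odd) and $g = n-2$ (even).

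\textbf{One-point calculation and conclusion.} It remains to show, for generic $s$, that (a) $\Gamma'_s$ is geometrically irreducible, (b) the singularity data at infinity matches Table~\ref{table_resolution_of_singularities} (the relevant quadratics are separable), (c) $\tilde\Delta$ is separable of degree exactly $2n$, and (d) at every root of $\tilde\Delta$ the cubic $R(x, y_0)$ has a genuine double root rather than a triple root---a condition that needs attention in $\charac k = 3$, where index-$3$ ramification would be wild. The classical analytic approach via a joint-resultant nonvanishing fails here, because the joint resultant of $R, \partial_x R, \partial_y R$ vanishes identically on account of the singularities at infinity. Instead, I fix a specific $s_0 \in S'(k)$ with simple parameter values---most interior $I_i, J_i$ set to zero, with $I_0, I_m, J_0, J_m$ chosen carefully according to the parity of $n$ and $\charac k$---and verify (a)--(d) by explicit polynomial manipulation. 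Each of (a)--(d) is an open condition on $S'$, so simultaneous verification at $s_0$ extends to a Zariski-dense open subset. The family $\Gamma \to S$ is related to $\Gamma' \to S'$ by an algebraic change of coordinates on the base, so the generic conclusion transfers.

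\textbf{Main obstacle.} The principal difficulty is the one-point calculation, especially conditions (c) and (d), which demand a hand computation of the cubic discriminant at $s_0$ together with separate casework in small characteristic. This is precisely the step flagged in the introduction to this section as the gap in the earlier proof of complex integrability, and filling it in is what extends the theorem to all characteristics $\neq 2$.
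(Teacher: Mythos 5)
Your overall strategy is sound but takes a genuinely different route from the paper, and it has a real gap: the one-point calculation that you correctly identify as ``the main obstacle'' is deferred entirely rather than carried out. In the paper that calculation is the substantive new content: Lemma~\ref{lemma_case_not_2n} and Lemma~\ref{lemma_case_n} exhibit explicit fibers, namely $R=x^3y^n - x^2 y^n - x - 1$ when $\charac k \nmid n$ and $R = x^3y^n + x^2 y^{n-1} - x^2 y^n + x - 1$ when $\charac k \mid n$, and verify the good-fiber conditions by direct polynomial manipulation. A proposal that ends by saying ``choose $I_0, I_m, J_0, J_m$ carefully'' has precisely the gap the section was written to fill, so as it stands your argument is incomplete.

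Beyond the missing calculation, the accounting of affine ramification is where you diverge most from the paper, and here the paper's method is strictly less work. You propose to control the full cubic discriminant $\Delta(y) = \mathrm{disc}_x R$, factor out $y^n$, and establish that $\tilde\Delta = \Delta/y^n$ is separable of degree $2n$ with each root giving a double rather than triple root of $R(\cdot, y_0)$. Separability of a degree-$2n$ discriminant is a heavy global constraint whose verification at a special fiber is a large resultant computation, and you also need to rule out wild ramification at potential triple roots in characteristic~$3$. The paper's Lemma~\ref{lemma_good_fiber_strategy} sidesteps $\tilde\Delta$ entirely: since $\omega(P) = \ord_P R_x$ at smooth affine points and $R_x$ is a rational function with total order zero, the affine ramification sum equals $-\sum_{P\in P_\infty} \ord_P R_x$, a finite local computation at the handful of points above $y=0,\infty$ in Table~\ref{table_resolution_of_singularities}. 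This converts your condition (c), a global separability statement, into the much easier condition (3) of Definition~\ref{def_good_fiber}, affine nonsingularity of a single special fiber, checkable by the Jacobian criterion. If you were to carry your discriminant route through, you would also want to correct a small inaccuracy: the leading and $y^n$-coefficients of $\Delta$ are scalar multiples of $J_m^3$ and $I_m^3$ only when $n$ is odd; for even $n$ they are $J_m^2(I_0^2 - 4J_m)$ and $I_m^2(J_0^2 - 4I_m)$, which are exactly the nondegeneracy quantities of Definition~\ref{def_good_fiber}, so generic nonvanishing still holds but the statement as written is not literally correct.

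Your Riemann--Hurwitz bookkeeping, the role of $\charac k \neq 2$ in both tameness and separability of the quadratics, the open-condition/density argument, and the passage from $\Gamma'$ to $\Gamma$ are all aligned with the paper's Lemma~\ref{lemma_good_fiber_suffices} and Proposition~\ref{prop_sings}. The fix is to replace the discriminant accounting with the $\ord_P R_x$ identity and then actually produce and check the special fibers.
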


We prove the theorem for $\Gamma'_s$, and the result for $\Gamma_s$ follows immediately because geometric genus is a birational invariant. We split the proof of Theorem \ref{thm_spec_curve_genus} into lemmas, which make up the remainder of this section.

The idea of the proof is to compute the genus of a special fiber. We have to choose the special fiber judiciously, because (roughly speaking) if the fiber we choose has worse singularities than the generic fiber, then the genus will be lower.
\pagebreak[2] 
\begin{defn} \label{def_good_fiber}
We call $\Gamma'_s$ a \emph{good fiber} if it satisfies all of the following conditions:
\begin{enumerate}
    \item The $k$-scheme $\Gamma'_s$ is integral.
    \item The singularities at $[1:0:0]$ and $[0:1:0]$ of $\Gamma'_s$ are resolved by $\Gamma''_s \to \Gamma'_s$. Equivalently, the following nondegeneracy conditions hold:
    $$I_m \neq 0, \quad J_m \neq 0, \quad {J_0}^2 - 4I_m \neq 0, \quad {I_0}^2 - 4 J_m \neq 0.$$
    \item The curve $\Gamma''_s$ is nonsingular in the main affine patch. Equivalently, there are no singularities of $\Gamma'_s$ in the main affine patch.
\end{enumerate}
\end{defn}

The next lemma shows that the generic curve's genus can be computed by looking at a good fiber, because a good fiber's singularities are as mild as possible for the family $\Gamma'$.

\begin{lemma} \label{lemma_good_fiber_suffices}
Suppose that a good fiber $\Gamma'_s$ exists. Then generically, the fibers of $\Gamma'$ have the same genus as $\Gamma'_s$.
\end{lemma}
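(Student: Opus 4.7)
The plan is to deduce the lemma from the fact that being a good fiber is a Zariski open condition on $S'$. First I would verify that each of the three defining properties in Definition \ref{def_good_fiber} cuts out a Zariski open subset of $S'$. The nondegeneracy inequalities $I_m \neq 0$, $J_m \neq 0$, $J_0^2 - 4 I_m \neq 0$, $I_0^2 - 4 J_m \neq 0$ in part (2) are manifestly open. Integrality of $\Gamma'_s$ is the complement of a closed condition on the coefficients of the defining polynomial $R$, by the standard fact that the geometrically reducible plane curves of fixed degree form a closed subvariety of the parameter space. The absence of singularities of $\Gamma'_s$ in the main affine patch is the complement of the image under the proper projection $\PP^2_{S'} \to S'$ of the closed subscheme of the main affine patch cut out by $R = \partial_x R = \partial_y R = 0$, hence also open. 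Since one good fiber exists by hypothesis and $S'$ is irreducible, the good locus $U \subset S'$ is a nonempty Zariski open, and hence dense, subset of $S'$.

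Next I would show that the geometric genus of $\Gamma'_s$ is constant on $U$. The family $\Gamma' \to S'$ is flat as a family of hypersurfaces of fixed degree in $\PP^2_{S'}$, and the explicit local descriptions in Table \ref{table_resolution_of_singularities} exhibit $\Gamma'' \to S'$ as a modification that remains flat with purely one-dimensional fibers. Over $U$, every fiber $\Gamma''_s$ is smooth by part (3) of Definition \ref{def_good_fiber}, so $\Gamma''|_U \to U$ is a smooth proper family of curves; its arithmetic genus is locally constant, and since $U$ is irreducible, it is constant throughout $U$. For smooth projective curves, arithmetic genus coincides with geometric genus. Finally, on a good fiber, the birational morphism $\Gamma''_s \to \Gamma'_s$ from a smooth projective curve identifies $\Gamma''_s$ with the normalization of $\Gamma'_s$, giving $g(\Gamma'_s) = g(\Gamma''_s)$. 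Combining these facts, $g(\Gamma'_s)$ is constant on the dense open subset $U$, which is the content of the lemma.

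The main technical subtleties are the openness of the integrality condition and the flatness of $\Gamma'' \to S'$. Openness of geometrically integral fibers is a classical fact from EGA IV but could also be checked concretely here by noting that reducibility of a degree-$(n+3)$ ternary form is detected by the vanishing of explicit polynomials in its coefficients. Flatness of $\Gamma'' \to S'$ can be verified locally from the coordinate changes in Table \ref{table_resolution_of_singularities}: in each chart the defining equation is a polynomial in the new coordinates whose lowest-order term has a unit coefficient independent of $(I,J)$, so the local ring of $\Gamma''$ at such a point is a free module over $k[I,J]$ of constant rank in a neighborhood of $U$.
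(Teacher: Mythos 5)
Your argument that the good locus is Zariski open has a genuine gap at condition~(3). You claim that ``no singularities in the main affine patch'' is the complement of the image, under the proper projection $\PP^2_{S'} \to S'$, of the subscheme cut out by $R = R_x = R_y = 0$ in the affine patch. But that subscheme is closed only in $\A^2_{S'}$, not in $\PP^2_{S'}$; it is merely locally closed in $\PP^2_{S'}$. Properness therefore does not apply, and the image of a locally closed set is only constructible, not a priori closed. Indeed, the singular locus $\Sigma$ of $\Gamma' \to S'$ is closed in $\Gamma'$, but it always contains the two sections at $[1:0:0]$, $[0:1:0]$; you need the image of $\Sigma \cap \A^2_{S'}$, and an affine singularity can in principle degenerate to infinity along a one-parameter limit, so that image need not be closed. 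Your flatness argument for $\Gamma'' \to S'$ is also sketchy: the local rings of $\Gamma''$ are not finite over $k[I,J]$, so ``free module of constant rank'' is not the right criterion to invoke.

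The paper's proof deliberately avoids both of these difficulties. It does not attempt to show the good locus is open; it uses only that conditions~(1) and (2) are open (which is true), giving the singularity structure at infinity from Table~\ref{table_resolution_of_singularities} on a dense open set. On that set, it compares $\Gamma'_s$ and a generic $\Gamma'_{s'}$ via a two-sided squeeze: the arithmetic genus of the resolution $\Gamma''_{s'}$ equals $p_a(\Gamma''_s) = g(\Gamma'_s)$ (same degree, same delta invariants at infinity), so extra affine singularities of $\Gamma'_{s'}$ could only push $g(\Gamma'_{s'})$ \emph{down}, giving $g(\Gamma'_{s'}) \leq g(\Gamma'_s)$; meanwhile, lower semicontinuity of geometric genus in the flat family $\Gamma'$ gives $g(\Gamma'_{s'}) \geq g(\Gamma'_s)$ generically. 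This sidesteps both openness of condition~(3) and flatness of $\Gamma''$. In fact, the openness of condition~(3) on the locus where (1), (2) hold \emph{follows from} the paper's semicontinuity argument rather than being available as an independent input, so your route is circular without importing essentially the same tool.
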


\begin{proof}
    Integrality is an open condition, because the locus of reducible polynomials of a fixed degree is an algebraic subset of the space of all polynomials of that degree. Thus, since $\Gamma'_s$ is integral, so is the generic fiber of $\Gamma'$.
    
    We claim that for generic $s'$, we have
    $$ g(\Gamma'_{s}) \geq g(\Gamma'_{s'}).$$
    
    Since geometric genus is a birational invariant,
    it suffices to show
    $$ g(\Gamma''_{s}) \geq g(\Gamma''_{s'}).$$
    For generic $s'$, the singularities of $\Gamma'_{s'}$ at $[1:0:0]$ and $[0:1:0]$ have the structure described in Table \ref{table_resolution_of_singularities}.
    Denote arithmetic genus by $\arithgenus$. 
    By the good fiber hypothesis, $g(\Gamma''_s) = \arithgenus(\Gamma''_s)$.
    Observe that $\Gamma'_s$ and $\Gamma'_{s'}$ are plane curves with the same degree, hence the same arithmetic genus. 
    Combining this with the fact that the singularities of $\Gamma'_s$ and $\Gamma'_{s'}$ at $[1:0:0]$ and $[0:1:0]$ have the same structure, we find
    $$\arithgenus(\Gamma''_s) =\arithgenus(\Gamma''_{s'}).$$
    For any curve, the arithmetic genus is at least the geometric genus, so
    $$\arithgenus(\Gamma''_{s'}) \geq g(\Gamma''_{s'}).$$
    Putting it all together proves the claim:
    $$g(\Gamma'_s) = g(\Gamma''_s) = \arithgenus(\Gamma''_s) \geq \arithgenus(\Gamma''_{s'}) \geq g(\Gamma''_{s'}) .$$
    Since all the curves in $\Gamma'$ have the same degree, the family $\Gamma'$ is flat, so the function $S' \to \Z_{\geq 0}$, $s \mapsto g(\Gamma'_s)$ is lower-semicontinuous in the Zariski topology. Therefore, the genus for generic $s' \in S$ must be exactly $g(\Gamma'_s)$.
\end{proof}

The next lemma describes the essential idea of the strategy for calculating the genus of a good fiber. To state it, we need more notation.

Consider the map $\nsrhmap \colon\Gamma''_s \to \PP^1$ from Proposition \ref{prop_sings}. Let $\Omega$ be the sheaf of relative differentials on $\Gamma''_s$, where the structure map is $\nsrhmap$. 

For any point $P \in \Gamma''_s$, we denote the length of $\Omega$ at $P$ by $\len( \Omega_P )$. For short, we also write $\omega(P) = \len(\Omega_P).$

We denote the vanishing order of a function $\Phi \in K(\Gamma''_s)$ at $P$ by $\ord_P \Phi$. If $\Phi$ has a pole at $P$, then $\ord_P \Phi$ is negative.

We use the notation
 $$R_x = \frac{\partial R}{\partial x}, \quad R_y = \frac{ \partial R }{\partial y}.$$
Note that the functions $R_x, R_y$ are computed by dehomogenizing $R$ so that $Z = 1.$

We also set the notation $P_\infty$ for the set of points of $\Gamma''_s$ above the line $Z = 0$. Thus,
\begin{itemize}
    \item \boxnodd: \qquad
    $P_\infty = \{O_2, W_1,W_2\}.$
    \item \boxneven:\qquad
    $P_\infty = \{O_2, O_3, W_1, W_2, W_3\}.$
\end{itemize}

\begin{lemma} \label{lemma_good_fiber_strategy}
If $\Gamma''_s$ is a good fiber, then
$$g(\Gamma''_s) = \frac{1}{2} \biggl( -4 - \sum_{P \in P_\infty} \ord_P R_x + \sum_{P \in P_\infty} \omega(P)\biggr).$$
\end{lemma}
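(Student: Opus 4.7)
The plan is to apply the Riemann--Hurwitz formula to $\nsrhmap \colon \Gamma''_s \to \PP^1$ and then translate the ``interior'' part of the ramification sum into a statement about the polar divisor of the rational function $R_x$ on $\Gamma''_s$.

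First I would verify the hypotheses for Riemann--Hurwitz. Since $\Gamma''_s$ is a good fiber, it is integral and nonsingular (the resolution handles the two points on $\{Z=0\}$, and (3) in Definition \ref{def_good_fiber} handles the main affine patch). The map $\nsrhmap$ has degree $3$: a generic fiber corresponds to fixing $y = y_0$ in $R(x,y)=0$, and the resulting polynomial in $x$ has leading coefficient $y_0^n \neq 0$ and degree~$3$. Separability of $\nsrhmap$ follows from the fact that $\partial R/\partial x$ is not identically zero, even in characteristic~$3$: in that case the derivative reduces to $\sum J_i x y^{n+i-m} + \sum I_i y^{m-i}$, which is nonzero because $I_m \neq 0$ (and $\charac k \neq 2$ ensures the computation works). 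So Riemann--Hurwitz gives
\[
2 g(\Gamma''_s) - 2 \;=\; -2 \cdot 3 \;+\; \sum_{P \in \Gamma''_s} \omega(P) \;=\; -6 + \sum_{P \in \Gamma''_s} \omega(P).
\]

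The main step is to show that at every affine smooth point $P$ of $\Gamma''_s$ (equivalently, every $P \notin P_\infty$), we have $\omega(P) = \ord_P R_x$. I would argue as follows. Because $P$ is smooth, at least one of $R_x(P), R_y(P)$ is nonzero. If $R_y(P) \neq 0$, then $R_y$ is a unit in $\mathcal{O}_{\Gamma''_s, P}$, so $\Omega_{\Gamma''_s, P}$ is generated by $dx$, and the relation $R_x \, dx + R_y \, dy = 0$ (obtained by differentiating $R=0$) gives $dy = -(R_x/R_y)\, dx$. Since $\pi^*\Omega_{\PP^1}$ is locally generated by $dy$, we get
\[
\Omega_{\Gamma''_s/\PP^1,\, P} \;\cong\; \mathcal{O}_{\Gamma''_s, P}/(R_x),
\]
whose length is $\ord_P R_x$. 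If instead $R_x(P) \neq 0$, then $\ord_P R_x = 0$ and the analogous calculation using $dy$ as a generator yields $\Omega_{\Gamma''_s/\PP^1, P} = 0$, so $\omega(P) = 0 = \ord_P R_x$. In both cases, $\omega(P) = \ord_P R_x$.

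Finally, I would invoke that $R_x$ is a nonzero rational function on the smooth projective curve $\Gamma''_s$, so its principal divisor has degree zero:
\[
\sum_{P \in \Gamma''_s} \ord_P R_x \;=\; 0.
\]
Since $R_x$ is polynomial in $x, y$, it is regular on the affine part, so $\sum_{P \notin P_\infty} \ord_P R_x = -\sum_{P \in P_\infty} \ord_P R_x$. Substituting into the Riemann--Hurwitz formula yields
\[
2 g(\Gamma''_s) - 2 \;=\; -6 \;+\; \sum_{P \in P_\infty} \omega(P) \;-\; \sum_{P \in P_\infty} \ord_P R_x,
\]
which rearranges to the desired identity. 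The only step with any real subtlety is the affine computation $\omega(P) = \ord_P R_x$ (in particular checking it handles the case $R_x(P) = 0$ via smoothness at $P$); the rest is bookkeeping.
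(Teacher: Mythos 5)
Your proof is correct and follows essentially the same route as the paper's: both apply Riemann--Hurwitz to the degree-3 map $\nsrhmap$, compute $\omega(P) = \ord_P R_x$ at affine points by identifying $\Omega_{\Gamma''_s/\PP^1, P}$ with $\mathcal{O}_P/(R_x)$, and convert the interior sum to a boundary sum using that the principal divisor of $R_x$ has degree zero. The one cosmetic difference is the separability check for $\nsrhmap$ --- you argue $R_x$ is not identically zero (hence coprime to the irreducible $R$), while the paper instead observes that $y$ is a uniformizer at the affine point $(1/I_m, 0)$ --- but both hinge on the good-fiber hypothesis $I_m \neq 0$, and your two-case treatment of $\omega(P)$ is a harmless expansion of the paper's single formula.
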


\begin{proof}
 Because $\Gamma''_s$ is a good fiber, it is a nonsingular, integral curve. Further, since $\Gamma''_s$ is a good fiber, we have $I_m \neq 0$. Working in dehomogenized coordinates $x,y$, we see by inspection that $y$ is a uniformizer at
 $$(x,y) = (1/I_m, 0) \in \Gamma'_s.$$
 Therefore $\nsrhmap$ is separable. So the Riemann-Hurwitz formula applies to $\nsrhmap$. Since $\nsrhmap$ is degree 3, the Riemann-Hurwitz formula states
 $$ 2g(\Gamma''_s) - 2 = (-2) \cdot 3 + \sum_{P \in \Gamma''_s} \omega(P).$$
 Breaking apart the sum and rearranging this formula gives
 $$ g(\Gamma''_s) = \frac{1}{2}\biggl(-4 + \sum_{P \not\in P_\infty} \omega(P) + \sum_{P \in P_\infty} \omega(P)\biggr).$$
 We compute the first sum another way. Suppose that $P \not\in P_\infty$. Then $P$ is in the main affine patch. Working in dehomogenized coordinates $x, y$, we can compute the module of relative differentials at $P$ explicitly.
 \begin{align*}
     \Omega_P = \left(  \frac{ k \langle dx, dy \rangle }{ dy, R_x dx + R_y dy } \right)_P 
     \cong \left(  \frac{ k \langle dx \rangle }{ R_x dx} \right)_P.
 \end{align*}
 Thus
 $$\omega(P) = \ord_P( R_x ).$$
 Then, since the zeros and poles of a rational function have total vanishing order 0, we get
 $$\sum_{P \not\in P_\infty} \omega(P) = -\sum_{P \in P_\infty} \ord_P R_x.$$
 This completes the proof of Lemma~\ref{lemma_good_fiber_strategy}.
\end{proof}

Now we come to the last step, which takes some luck. In each characteristic, we need to find a good fiber and compute the vanishing order $\ord_P R_x$ and $\omega(P)$ for each point $P \in P_\infty$. We split the job into cases depending on the characteristic and $n$.

For the remainder of the section, we set notation for the quantity which we claim is the expected genus,
\[
g(n) = \begin{cases}
n - 1 & n \text{ odd,} \\
n - 2 & n \text{ even.} \\
\end{cases}
\]

\begin{lemma} \label{lemma_case_not_2n}
Suppose $\charac k \neq 2$ and $\charac k \nmid n$. The curve $\Gamma'_s$ cut out by
\begin{equation} \label{eq_tame_case}
    R(x,y) = x^3 y^n - x^2 y^n - x - 1 = 0
\end{equation}
is a good fiber, and its genus is $g(n)$. This fiber corresponds to $s \in S'$ defined by the coordinates $I_m = -1, J_m = 1$, and all other $I_i, J_i = 0$. 
\end{lemma}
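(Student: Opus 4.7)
My plan is first to verify the three clauses of Definition \ref{def_good_fiber} for this fiber and then to apply Lemma \ref{lemma_good_fiber_strategy} directly.

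For the good-fiber check, the parameters satisfy Definition \ref{def_good_fiber}(2) automatically: $I_m = -1 \neq 0$, $J_m = 1 \neq 0$, $J_0^2 - 4I_m = 4 \neq 0$, and $I_0^2 - 4J_m = -4 \neq 0$, where the last two use $\charac k \neq 2$. For integrality, I would rewrite $R = 0$ as $y^n = (x+1)/(x^2(x-1))$; the right-hand side has a simple zero at $x = -1$, so it is not a $d$-th power in $k(x)$ for any $d > 1$ dividing $n$, and the Kummer irreducibility criterion (using $\charac k \nmid n$) gives irreducibility of $R$. For smoothness in the main affine patch, I would compute $R_x = 3x^2 y^n - 2xy^n - 1$ and $R_y = nx^2 y^{n-1}(x-1)$: these vanish simultaneously only if $y = 0$, $x = 0$, or $x = 1$, but $x = 0$ gives $R = -1$, $x = 1$ gives $R = -2 \neq 0$ (using $\charac k \neq 2$), and $y = 0$ yields $x = -1$ with $R_x(-1, 0) = -1 \neq 0$.

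The key algebraic observation is that on the curve, $R = 0$ implies $x^2 y^n = (x+1)/(x-1)$ and $xy^n = (x+1)/(x(x-1))$, so
$$R_x \;=\; \frac{2(x^2 + x - 1)}{x(x - 1)}$$
as rational functions on $\Gamma'_s$. Since $x \in \{0, 1\}$ does not occur on the curve in the main affine patch, all poles of $R_x$ lie in $P_\infty$, so computing $\sum_{P \in P_\infty} \ord_P R_x$ reduces to computing $\ord_P x$ and $\ord_P(x - 1)$ at each $P$. For each $P \in P_\infty$, I would use the blowup charts of Table \ref{table_resolution_of_singularities} to identify a uniformizer. For odd $n$: at $O_2$ the local equation gives $\hat y \sim -\hat z^2$, so $\hat z$ is a uniformizer and $x = 1/(\hat y^m \hat z)$ has order $-n$; at $W_1$, the equation forces $\hat z - 1 \sim -2\hat x^n$, so $\hat x$ is a uniformizer with $x(W_1) = 1$ and $\ord_{W_1}(x-1) = n$; at $W_2$, $\hat z \sim -\hat x^2$ makes $\hat x$ a uniformizer, and $x = \hat x \hat z^m$ has order $n$ with $x(W_2) = 0$. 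For even $n$: each of $O_2, O_3$ sits at $(\hat y, \hat z) = (0, \pm 1)$ (distinct roots by $\charac k \neq 2$), with $\hat y$ a uniformizer and $\ord x = -m$; $W_1$ behaves as in the odd case; $W_2, W_3$ sit at $(\hat x, \hat z) = (\pm i, 0)$ (distinct roots of $\hat x^2 + 1$, by $\charac k \neq 2$), with $\hat z$ a uniformizer and $\ord x = m$, $x = 0$.

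Substituting into the formula for $R_x$ and summing gives $\sum_P \ord_P R_x = -2n$ in both parities. The ramification data from the table is tame (indices at most $2$ and $\charac k \neq 2$), so $\omega(P) = e_P - 1$, yielding $\sum_P \omega(P) = 2$ for odd $n$ and $0$ for even $n$. Lemma \ref{lemma_good_fiber_strategy} then produces $g = \tfrac{1}{2}(-4 + 2n + 2) = n - 1$ for odd $n$ and $g = \tfrac{1}{2}(-4 + 2n) = n - 2$ for even $n$, as claimed. The main obstacle is the case-by-case local analysis at the three or five infinity points, where one must unpack each blowup chart to find a uniformizer and track $\ord x$, $\ord(x-1)$; the algebraic simplification of $R_x$ above is what makes the calculation tractable. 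The hypothesis $\charac k \neq 2$ plays a recurring role in the discriminants of Definition \ref{def_good_fiber}(2), in the tameness of ramification, and in the distinctness of the roots defining the non-$W_1$ points for even $n$.
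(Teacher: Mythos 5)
Your proof is correct and reaches the same numerical data (Table \ref{table_genus_computation_case_not_2n}) by a slightly different route, so it is worth noting what distinguishes your argument from the paper's. The paper establishes integrality via Eisenstein's criterion at the ideal $(x+1)$; you instead solve for $y^n = (x+1)/(x^2(x-1))$ and invoke the irreducibility criterion for $X^n - a$ (the simple zero at $x = -1$ prevents $a$ from being a $d$-th power). Both work, and in fact your irreducibility criterion does not actually need the hypothesis $\charac k \nmid n$, though it is available. The more substantive difference is in the computation of $\ord_P R_x$ at the points at infinity. The paper substitutes the blowup charts directly into $R_x$ and manipulates Laurent expansions term by term (e.g. adding a multiple of $R$ to isolate a minimal-order term at $O_2$). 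Your approach first uses the curve relation $x^2 y^n(x-1) = x+1$ to rewrite
\[
R_x = \frac{2(x^2+x-1)}{x(x-1)}
\]
as a rational function of $x$ alone, which reduces the pointwise computation to tracking $\ord_P x$ and $\ord_P(x-1)$ (and, implicitly, $\ord_P(x^2+x-1)$ at the points where $x$ has a pole, which you account for in your totals). This is a genuine simplification that avoids repeated Laurent algebra in the blowup charts; the paper's method is more direct but requires more bookkeeping. The hypothesis $\charac k \neq 2$ enters in both treatments exactly where you say: the discriminant conditions of Definition \ref{def_good_fiber}(2), tameness of the degree-2 ramification, the leading coefficient $2$ at $W_1$, and the separability of $\hat z^2 - 1$ and $\hat x^2 + 1$ when $n$ is even. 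One small slip: at $O_2$ for $n$ odd, the local equation $\hat y - \hat z^2 - \cdots = 0$ gives $\hat y \sim +\hat z^2$, not $-\hat z^2$; the sign has no effect on the order computation.
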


\begin{proof}
    First we check the three conditions for  $\Gamma'_s$ to be a good fiber. 
    \begin{enumerate}
    \item  Since $\charac k \neq 2$, the Eisenstein criterion applies with reference to the ideal $(x + 1)$, so $\Gamma'_s$ is integral.
    \item Since $\charac k \neq 2$, the quantities $I_m, J_m, {I_0}^2 - 4J_m, {J_0}^2 - 4 I_m$ are all nonzero.
    \item We claim that there are no singularities in the main affine patch. This follows from the Jacobian criterion for smoothness. We need to show there are no simultaneous solutions $(x,y)$ of
    \[
\begin{array}{rclcl}
  R(x,y) & = & x^3 y^n - x^2 y^n - x - 1 & = & 0, \\
  R_x (x,y) & = & 3x^2 y^n - 2x y^n - 1 & = & 0, \\
  R_y (x,y) & = & n(x^3 - x^2) y^{n-1} & = & 0. \\
\end{array}
\]
    Since by assumption $n \neq 0$ in our characteristic, we deduce from $R_y = 0$ that either $z = 0, k = 0$, or $k = 1$. Looking at $R_x$, only the last case is possible, and in that case $y^n = 1$. But plugging this information into $R$, we have $-2 = 0$, which is false since $\charac k \neq 2$.
    \end{enumerate}
    Next, we compute the quantities $\omega(P)$ and $\ord_P R_x$ for each $P \in P_\infty$. We list the results of the calculation in Table \ref{table_genus_computation_case_not_2n}.
\begin{table}[h] 
\renewcommand{\arraystretch}{1.2}
\centering
\small
\begin{tabular}{c|c|c}
    \hline
    \multicolumn{3}{|c|}{$n$ odd} \\
    \hline
     $P$ & $\omega(P)$ & $\ord_P R_x$ \\
     \hline
     $O_2$ & 1 & 0\\
     $W_1$ & 0 & $-n$\\
     $W_2$ & 1 & $-n$\\
     \end{tabular}
     \quad \quad \quad
     \begin{tabular}{c|c|c}
     \hline
        \multicolumn{3}{|c|}{$n$ even} \\
    \hline
     $P$ & $\omega(P)$ & $\ord_P R_x$ \\
     \hline
     $O_2$ & 0 & 0\\
     $O_3$ & 0 & 0\\
     $W_1$ & 0 & $-n$\\
     $W_2$ & 0 & $-n/2$\\
     $W_3$ & 0 & $-n/2$\\
\end{tabular}
\caption{Calculation of the genus of $\Gamma'_s.$}
\label{table_genus_computation_case_not_2n}
\end{table}
To compute these quantities, first we compute $\omega(P)$ for each $P \in P_\infty$. We can inspect the equations in Table \ref{table_resolution_of_singularities} to deduce the ramification index $e(P)$ at each $P \in P_\infty$, and we find that $e(P) \leq 2$ for each $P$. So, by the assumption that $\charac k \neq 2$, the ramification at each $P$ is tame. So $\omega(P) = e(P) - 1.$
    
Next, we compute $\ord_P R_x$ for each $P \in P_\infty$. 
\par\textbullet\enspace Let $P = O_2$. First we calculate
        $$R_x = 3x^2 y^n - 2 xy^n - 1.$$
        We homogenize, then dehomogenize by $X = 1$. We obtain the local equation
        $$R_x = \frac{1}{z^{2+n}} (3y^n - 2 y^{n- m}z^{1+m} - z^{2+n}).$$
        We blow up $(y,z) = (0,0)$ according to the formula in Table \ref{table_resolution_of_singularities}. We compute the equation of $R_x$ in the coordinates $\hat{y}, \hat{z}$. We split into cases for $n$ odd and even. Suppose that $n$ is odd; then
        $$R_x = \frac{3\hat{y} - 2 \hat{y}^{m+1} \hat{z} - \hat{z}^2}{\hat{z}^2}.$$
        Because $\ord_P$ is a valuation, if one of these terms has smaller order than all the others, then $\ord_P R_x$ is computed by the order of that term. But in this case there are two terms of least order: $$\ord_P(\hat{y}/\hat{z}) = \ord_P(1) = 0.$$
        We eliminate one term by adding the appropriate multiple of $R$; this leaves one term,~$2$, of minimal order 0. Then, since $\charac k \neq 2$, the vanishing order at $P$ is 0. 
        
        On the other hand, suppose that $n$ is even; then
        $$R_x = \frac{3 - 2 \hat{y}^m \hat{z} - \hat{z}^2}{\hat{z}^2}.$$
        The coordinates of $O_2, O_3$ are $(\hat{y}, \hat{z}) = (0, \pm 1)$, so we can see directly by plugging in values that, since $2 \neq 0$, the function $R_x$ has order 0 at $O_2, O_3$.
    \par\textbullet\enspace
        We compute the order at $P = W_1$. The equation of $R_x$ in $\hat{x}, \hat{z}$ is
        $$R_x = \frac{3 - 2\hat{z} - \hat{x}^n}{\hat{x}^n \hat{z}^{2+n}}.$$
        The numerator at $W_1 = (0,1)$ is 1. The order of the denominator is computed by examining the local equation for $R$. We find that $\hat{z}$ has order 0, and $\hat{x}$ has order 1, so $\ord_P R_x = -n.$
    \par\textbullet\enspace The argument for $W_2, W_3$ uses the same techniques, so we omit it.
    
    Finally, we plug the quantities in Table \ref{table_genus_computation_case_not_2n} into Lemma \ref{lemma_good_fiber_strategy}.
\end{proof}

\begin{lemma} \label{lemma_case_n}
Suppose $\charac k \neq 2$ and $\charac k \mid n$. The curve $\Gamma'_s$ cut out by
\begin{equation} \label{eq_wild_case}
R(x,y) = x^3 y^n + x^2 y^{n-1} - x^2 y^n + x - 1
\end{equation}
is a good fiber, and its genus is $g(n)$. This fiber corresponds to $s \in S'$ defined by the coordinates $I_m = 1, \; J_{m -1} = -1, \; J_m = 1$, and all other $I_i, J_i = 0$. 
\end{lemma}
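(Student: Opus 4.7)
The plan is to follow the structure of Lemma~\ref{lemma_case_not_2n}: verify the three good-fiber conditions for the specified $\Gamma'_s$, compute $\omega(P)$ and $\ord_P R_x$ at each $P \in P_\infty$, and apply Lemma~\ref{lemma_good_fiber_strategy}. The reason a new choice of fiber is required is that under the hypothesis $\charac k \mid n$ the Lemma~\ref{lemma_case_not_2n} fiber has $R_y = n(x^3 - x^2) y^{n-1} \equiv 0$, so the Jacobian criterion produces many singularities in the main affine patch.

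The subtlest good-fiber condition for the new fiber is integrality, as the Eisenstein argument of the previous lemma no longer applies. I argue as follows. Since $R$, viewed as a cubic in $x$ with coefficients in $k[y]$, has content $1$, Gauss's lemma reduces irreducibility over $k[x, y]$ to irreducibility over $k(y)$, which for a cubic is equivalent to the absence of roots. The rational root theorem forces any root to be of the form $a y^{-j}$ with $a \in k^*$ and $0 \leq j \leq n$, and a case analysis on the Laurent exponents appearing in
\[
R(a y^{-j}, y) = a^3 y^{n-3j} + a^2 y^{n-1-2j} - a^2 y^{n-2j} + a y^{-j} - 1,
\]
split by whether $n - 3j$, $n - 1 - 2j$, or $n - 2j$ equals $0$, rules each value of $j$ out by exhibiting a nonvanishing Laurent coefficient. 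The remaining good-fiber conditions are immediate: $I_m = J_m = 1$ and $I_0^2 - 4 J_m = J_0^2 - 4 I_m = -4 \neq 0$ since $\charac k \neq 2$; and with $n = 0$ in $k$ we have $R_y = -x^2 y^{n-2}$, so $R_y = 0$ forces $x = 0$ (giving $R = -1$) or $y = 0$ (giving $x = 1$ and $R_x = 1$), leaving no simultaneous zero.

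Computing $\omega(P)$ and $\ord_P R_x$ then follows the pattern of Lemma~\ref{lemma_case_not_2n}. At each $P \in P_\infty$, I substitute the specified coefficients into the local equation from Table~\ref{table_resolution_of_singularities}; a Jacobian check at $P$ confirms smoothness of $\Gamma''_s$, identifies a uniformizer, and verifies that the ramification index of $\nsrhmap$ agrees with the generic value in that table. Since all ramification indices are at most $2$ and $\charac k \neq 2$, ramification is tame and $\omega(P) = e(P) - 1$. To evaluate $\ord_P R_x$, I pull back $R_x = 3 x^2 y^n + 2 x y^{n-1} - 2 x y^n + 1$ into blowup coordinates and extract the term of lowest order, matching the values in Table~\ref{table_genus_computation_case_not_2n}. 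Lemma~\ref{lemma_good_fiber_strategy} then yields $g(\Gamma''_s) = g(n)$.

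The main obstacle beyond the case of Lemma~\ref{lemma_case_not_2n} is the subcase $\charac k = 3$ (with $3 \mid n$), where the term $3 x^2 y^n$ of $R_x$ vanishes and cannot be responsible for the leading coefficient in any blowup chart. A direct inspection at each $P \in P_\infty$ shows that the leading contribution instead comes from the constant $1$ (at $O_2$, and $O_3$ when $n$ is even), from $-2 x / z^{n+1}$ (at $W_1$), or from $-2 x y^n$ (at $W_2$, and $W_3$ when $n$ is even); each coefficient is $\pm 1$ or $\pm 2$, hence nonzero since $\charac k \neq 2$, so the resulting orders still agree with Table~\ref{table_genus_computation_case_not_2n}.
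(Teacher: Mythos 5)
Your proof is correct and follows the paper's template (verify the three good-fiber conditions, then compute the local data at $P_\infty$ and apply Lemma~\ref{lemma_good_fiber_strategy}), but the integrality step is a genuinely different, and more systematic, argument. The paper shows that a nontrivial factorization of $R$ would contain a factor linear in $x$, then asserts that this factor must be $x - c$ for constant $c$, which is ruled out by coefficient comparison; the step forcing the linear factor to have constant $y$-coefficient is stated very tersely and is delicate to justify (the supporting claim that $\Gamma'_s$ meets $\{y=0\}$ nowhere in the main affine patch does not hold as written, since $R(1,0)=0$). Your approach sidesteps this entirely: $R$ is primitive as a cubic in $x$ over $k[y]$, so Gauss's lemma reduces integrality to the absence of a root in $k(y)$, and the rational root theorem confines any root to the form $a y^{-j}$ with $a \in k^*$, $0 \le j \le n$, after which the Laurent-coefficient casework is routine. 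One small gap in your sketch: the cases where the constant term of $R(ay^{-j},y)$ can vanish are not only those with $n-3j = 0$, $n-1-2j = 0$, or $n-2j = 0$, but also $j = 0$, where the term $a y^{-j}$ is itself absorbed into the constant term, giving $a-1$. You should list $j=0$ explicitly; there the coefficient $a^2$ of $y^{n-1}$ is nonzero and closes the case.

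The rest of the argument matches the paper's. Your $R_y = -x^2 y^{n-2}$ is the correct derivative (the paper writes $y^{n-1}$, a typo), and the discriminant check is immediate. Deferring the $\omega(P)$ and $\ord_P R_x$ computations to Table~\ref{table_genus_computation_case_not_2n} is exactly what the paper does, and your remark about the subcase $\charac k = 3 \mid n$, where the term $3x^2 y^n$ of $R_x$ vanishes so the leading coefficient at each $P \in P_\infty$ must be supplied elsewhere, is a worthwhile verification of a detail the paper leaves implicit in the phrase ``nearly identical.''
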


\begin{proof}
The structure of the proof is identical to that of Lemma \ref{lemma_case_not_2n}, but the computations are different. First we check the three conditions for $\Gamma'_s$ to be a good fiber.
\begin{enumerate}
    \item We claim $\Gamma'_s$ is integral. If it were not, then $R(x,y)$ would factor in $k[x,y]$, and since $R$ is cubic in $x$, there would be a factor linear in $x$. Since $\Gamma'_s$ contains no points of the form $y = 0$ in the main affine patch, this line would be of the form $x = c$ for some constant $c \in k$. But this cannot happen, since if $R(c,y)$ were identically 0, then examining coefficients, we would have both $c = 0$ and $c = 1.$
    \item Since $\charac k \neq 2$, the quantities $I_m, J_m, {I_0}^2 - 4J_m, {J_0}^2 - 4 I_m$ are all nonzero.
    \item We claim that $\Gamma'_s$ is nonsingular in the main affine patch. To see this, observe that a singularity $(x,y)$ would satisfy
    $$R_y (x,y) = -x^2 y^{n-1} = 0,$$ 
    since $\charac k \mid n$. So if $(x, y)$ is a singularity, then $x = 0$ or $y = 0$. But for such points, we have 
    $$R_x(x,y) = 3x^2 y^n - 2x y^{n-1} - 2x y^n + 1 = 1 \neq 0.$$
\end{enumerate}
The values of $\omega(P)$ and $\ord_P R_x$ coincide with the numbers in Table \ref{table_genus_computation_case_not_2n}. The computation is nearly identical, so we omit it.
\end{proof}
\new{
\begin{remark} \label{rem_source_of_polys}
We briefly explain how the polynomials \eqref{eq_tame_case} and \eqref{eq_wild_case} were found. The first is
$$R(x,y) = x^3 y^n - x^2 y^n - x - 1 = 0.$$
We tried various polynomials that had few terms and this was the simplest one that we found that had small genus. But this polynomial is not suitable for the case when the the characteristic of $k$ divides $n$, because then the $y$-derivative $R_y(x,y)$ is identically 0, and the Jacobian criterion reveals extra singularities in the main affine patch. Thus for this case we use the polynomial 
$$R(x,y) = x^3 y^n + x^2 y^{n-1} - x^2 y^n + x - 1.$$
Characteristic 2 is more difficult than the others because a hypothesis in the definition of good fiber (Definition \ref{def_good_fiber}) reduces to $$I_0, I_m, J_0, J_m \neq 0$$
in characteristic 2. Thus a good fiber needs to have at least 6 monomials in its defining equation, which makes calculating the singularities with the Jacobian criterion very complicated. This is the reason for the hypothesis on characteristic in Theorem \ref{thm_main_1}.
\end{remark}
}
Finally, we collect these results to prove Theorem \ref{thm_spec_curve_genus}.

\begin{proof}[Proof of Theorem \ref{thm_spec_curve_genus}]
Combine Lemma \ref{lemma_good_fiber_suffices}, Lemma \ref{lemma_case_not_2n}, Lemma \ref{lemma_case_n}.
\end{proof}

\section{The Spectral Transform} \label{sect_spectral_transform}

We now describe the construction of the \emph{direct spectral transform}, the birational map 
$$\delta \colon \Tw \dashrightarrow \abfam$$ of Theorem \ref{thm_main_1}. The idea is that a twisted $n$-gon $v$ can be reconstructed from the Lax matrices $T_i(v, \zeta)$, and matrices can be reconstructed from their eigenvalues and eigenvectors, at least generically. The eigenvalues correspond to the three points of the spectral curve $\Gamma_{H(v)}$ above $\zeta$. Each eigenvalue has an associated eigenvector (up to scale), and these fit together into a line bundle on $\Gamma_{H(v)}$.

In fact, Soloviev's argument over $\C$ in \cite{MR3161305} goes over to our setting with only cosmetic changes, so we just formulate the statements we need and appeal to \cite{MR3161305} for the proofs.

 Let $\Gamma \to S$ be the spectral curve (Definition \ref{def_spec_curve}). The map $\Gamma \to S$ is projective, flat, and finitely presented, since it is a family of projective plane curves of the same degree. By Theorem \ref{thm_spec_curve_genus}, the fibers are integral schemes of dimension 1 over an algebraically closed field. Then by \cite[Theorem 8.2.1]{MR1045822}, the relative Picard scheme $\Pic_{\Gamma/S}$ exists and has the structure of both a $k$-variety and an $S$-scheme.
    
zThe map $H \colon \Tw \dashrightarrow S$ gives a dense, Zariski open subset $\Tw^\circ \subset \Tw$ the structure of an $S$-scheme. For each $v \in \Tw^\circ$, the special fiber $\Gamma_{H(v)}$ is a degree 3 curve. The points $(\lambda, \zeta)$ of $\Gamma_{H(v)}$ in the main affine patch parametrize eigenvalues $\lambda$ of $T_0(v, \zeta)$. The map $\zeta \colon \Gamma_{H(v)} \to \PP^1$ is generically 3-to-1, so there is a unique $\lambda$-eigenvector $\psi$ (up to scale) of $T_0(v,\zeta)$ associated to a generically chosen point $(\lambda, \zeta)$ on $\Gamma_{H(v)}$. The eigenvector's coordinates are rational functions of $\lambda, \zeta$ and $v$. Because $\psi$ varies algebraically in $\lambda$ and $\zeta$, and $\Gamma_{H(v)}$ is a projective curve, we can extend $\psi$ in a unique way to a line bundle on the normalization. Since $\Tw^\circ$ is an $S$-scheme, the family of line bundles $\psi$ is represented by a $\Tw^\circ$-point on $\Pic_{\Gamma/S}$. So there is a corresponding map
    \new{$$\Psi \colon \Tw^\circ \to \Pic_{\Gamma/S}.$$}

\begin{lemma}
For any algebraically closed field $k$ and for generic $v$ in $\Tw^\circ$, the line bundle $\psi_{H(v)}$ has degree $g(n) + 2$.
\end{lemma}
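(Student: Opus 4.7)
The plan is to realize the eigenvector line bundle $\psi_{H(v)}$ explicitly and then apply Riemann--Roch on $\Gamma''_{H(v)}$, essentially following Soloviev \cite{MR3161305} with attention to characteristic. Given a generic point $(\lambda, \zeta)$ on $\Gamma_{H(v)}$, take $\psi(\lambda,\zeta)$ to be the $3$-vector of $2 \times 2$ minors from, say, the first two rows of $T_0(v,\zeta) - \lambda \Id_3$. This is a nonzero element of $\ker(T_0(v,\zeta) - \lambda \Id_3)$ because for generic spectral data the matrix has rank exactly $2$. The three coordinates $\psi_1, \psi_2, \psi_3$ of this vector then extend across the points of $P_\infty$ to three global sections of a well-defined line bundle on $\Gamma''_{H(v)}$, and this is the bundle $\psi_{H(v)}$ (up to a normalization divisor that one fixes once and for all).

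Next, I would use the covering $\nsrhmap \colon \Gamma''_{H(v)} \to \PP^1$ of Proposition \ref{prop_sings}, which is finite of degree $3$. For generic $v$, Theorem \ref{thm_spec_curve_genus} gives that $\Gamma_{H(v)}$ is integral, so a generic fiber of $\nsrhmap$ consists of three distinct points corresponding to three distinct eigenvalues of $T_0(v,\zeta_0)$; at such a $\zeta_0$, the associated eigenvectors are linearly independent. This shows that $\psi_1, \psi_2, \psi_3$ are linearly independent sections and, more strongly, that they trivialize $\nsrhmap_*\psi_{H(v)}$ over a Zariski open subset of $\PP^1$. A generic-vanishing check at the finitely many remaining fibers then upgrades this to the identification $\nsrhmap_*\psi_{H(v)} \cong \shO_{\PP^1}^{\oplus 3}$, so $h^0(\psi_{H(v)}) = 3$ and $h^1(\psi_{H(v)}) = 0$. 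Applying Riemann--Roch on $\Gamma''_{H(v)}$ gives
$$3 = h^0(\psi_{H(v)}) - h^1(\psi_{H(v)}) = \deg \psi_{H(v)} - g(n) + 1,$$
hence $\deg \psi_{H(v)} = g(n) + 2$.

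The main obstacle is the push-forward identification $\nsrhmap_*\psi_{H(v)} \cong \shO_{\PP^1}^{\oplus 3}$, which needs a uniform bound on the degree of the cokernel at the special fibers $\zeta = 0$ and $\zeta = \infty$. I plan to handle this by a direct local computation using the explicit formula \eqref{eqn:defofLiz} for $L_i$ and the local coordinates at the points $O_j, W_j$ recorded in Table \ref{table_resolution_of_singularities}. The parity of $n$ enters here, because it determines the ramification of $\nsrhmap$ at each $O_j, W_j$, and the pole/zero tallies for $\psi_1, \psi_2, \psi_3$ at $P_\infty$ differ accordingly; in the end both cases produce $g(n) + 2$ ($=n+1$ when $n$ is odd, $=n$ when $n$ is even). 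The only characteristic-dependent input is the genus formula of Theorem \ref{thm_spec_curve_genus}, which already assumes $\charac k \neq 2$, and so no additional characteristic hypothesis will be needed for this lemma.
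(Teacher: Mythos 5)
Your approach is genuinely different from the paper's, and is closer in spirit to Soloviev's original argument over $\C$. The paper instead takes the complex computation (citing Soloviev's Lemma 3.4) as a black box and extends it to arbitrary characteristic by a spreading-out argument: the eigenvector bundle is given by a morphism defined over $\Z$ from an open subset of $\Gm^{2n}_\Z$ into $\Pic_{\Gamma_\Z / S_\Z}$, the degree is locally constant on $\Pic$, the source is connected, and so the degree over every fiber of $\Spec\Z$ is forced to match the degree over $\C$. That argument is short, uses nothing about the geometry of $\nsrhmap$ in positive characteristic, and — importantly — is valid in characteristic 2.

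That last point exposes a real gap in your proposal. You write that ``the only characteristic-dependent input is the genus formula of Theorem \ref{thm_spec_curve_genus}\dots and so no additional characteristic hypothesis will be needed for this lemma.'' But the lemma as stated asserts the degree is $g(n)+2$ (where $g(n)$ is the explicit formula $n-1$ or $n-2$) \emph{for any algebraically closed field $k$}, including characteristic 2. Your route computes $\deg\psi_{H(v)} = g(\Gamma''_{H(v)}) + 2$ via Riemann--Roch on the spectral curve, and then needs Theorem \ref{thm_spec_curve_genus} to convert $g(\Gamma''_{H(v)})$ into $g(n)$. That conversion is exactly where the hypothesis $\charac k \neq 2$ enters, so your argument genuinely proves less than the lemma claims. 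The base-change argument sidesteps this entirely, because it transports the numerical degree directly, without passing through the genus.

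A second gap: the hard content of your plan — the identification $\nsrhmap_*\psi_{H(v)} \cong \shO_{\PP^1}^{\oplus 3}$ — is deferred, and the ``finitely many remaining fibers'' include not only $\zeta \in \{0, \infty\}$ but also the branch locus of $\nsrhmap$ inside the main affine patch. You propose to handle only the points $O_j, W_j$ of $P_\infty$ via Table \ref{table_resolution_of_singularities}; the finite ramification points need a separate argument that the $3\times 3$ matrix of truncated Taylor coefficients of the eigenvector is invertible there (this holds at a simple, tamely ramified branch point with the generic Jordan-block structure, but you do not say so). Also, the phrase ``up to a normalization divisor that one fixes once and for all'' is doing real work: the degree of $\psi_{H(v)}$ is not invariant under twisting, so the bundle you compute the degree of must be shown to coincide with the bundle $\psi_{H(v)}$ defined in the paper as the pullback of $\shO_{\PP^2}(1)$ under the projectivized eigenvector map. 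In short: the approach is workable and would give a self-contained proof not relying on Soloviev's computation, but it is substantially more work, is not carried to completion, and as written cannot reach characteristic 2.
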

\begin{proof}
Over $\C$, this follows from the fact that the ramification points of $\nsrhmap$ on $\Gamma$ are generically simple; a proof is given in \cite[Lemma 3.4]{MR3161305}. We extend this to any characteristic by observing that the degree of a line bundle defined over $\Z$ is preserved by base change. More formally, let $S_\Z = \A^{2m+2}_\Z$, and  consider the subscheme $\Gamma$ of $\PP^2_{S_\Z}$ cut out by the affine equation $Q(\lambda,\zeta) = 0$ defining the spectral curve. Define $(\Tw^\circ)_\Z$ similarly. The scheme $\Pic_{\Gamma_{\Z} / S_{\Z}}$ exists, and the eigenvector bundle is a map from $\Gm^{2n}$ to $\Pic_{\Gamma_{\Z}/S_{\Z}}$. Since the eigenvector bundle has degree $g(n) + 2$ for $\C$-points, it must send the generic point of $(\Tw^\circ)_\Z$ into $\Pic_{\Gamma_{\Z}/S_{\Z}}^{g(n)+2}$.
\end{proof}

\begin{defn} \label{def_spectral_transform}
For each $n \geq 4$, we define a relative abelian variety $\abfam$, the \emph{spectral data}, and a rational map
$$\delta \colon \Tw \dashrightarrow \abfam,$$
the \emph{spectral transform}. The definitions of $\abfam$ and $\delta$ depend on the parity of $n$, as follows.

\begin{itemize}
    \item \boxnodd: Let $\abfam = \Pic_{\Gamma/S}^{n+1}$, and let $\delta = \new{\Psi}$. The points $O_1, O_2, W_1, W_2$ were defined as sections of $\Gamma' \to S'$; see Definition \ref{def_special_sections}. By abuse of notation we let $O_1, O_2, W_1, W_2$ denote the corresponding sections of $\Gamma \to S$. The morphism $\delta$ on $\Tw^\circ$ extends to a rational map
    $$\delta \colon \Tw \dashrightarrow \abfam.$$
    \item \boxneven: In this case, we need to mark the points $O_2, O_3, W_2, W_3$ appearing in Table \ref{table_resolution_of_singularities}. Extend the base $S$ by adjoining a root $\hat{x}_0$ of ${\hat{x}_0}^2 - h_1 {\hat{x}_0} + h_{n}$ and a root $\hat{z}_0$ of $h_{n+1} {\hat{z}_0}^2 - h_{m+1} \hat{z}_0 + 1$. (These correspond to the roots of the polynomials $J_m {\hat{z}_0}^2 - I_0 \hat{z}_0 + 1$ and $I_m {\hat{x}_0}^2 - J_0 \hat{x}_0 + 1$ appearing in Table \ref{table_resolution_of_singularities}.) This defines a generically $4$-to-$1$
    cover $S^{\marked} \to S$, and by pullback, we define a family $\Gamma^{\marked} \to S^{\marked}$. The members of this family are spectral curves with a marking of the points $O_2, O_3$ and $W_2, W_3$. We define $$\abfam = \Pic_{\Gamma^{\marked}/S^{\marked}}^{n}.$$
    The latter relative Picard scheme exists by the same condition we used for $\Pic_{\Gamma/S}.$
    Recall that $x_i, y_i$ denote the corner invariants, which are coordinates on $\Tw$. The map $H$ factors through $H^{\marked} \colon \Tw^\circ \to S^{\marked}$, by choosing
$$ \hat{x}_0 = \prod_{i=0}^q x_{2i}, \quad \hat{z}_0 = \prod_{i=0}^q y_{2i}.$$
So $\Tw^\circ$ has the structure of an $S^{\marked}$-scheme. Define
$$\delta \colon \Tw^\circ \to \Pic_{\Gamma^{\marked}/S^{\marked}}$$
to be the map induced by pulling back $\new{\Psi}$. Then $\delta$ defines a rational map
$$ \delta \colon \Tw \dashrightarrow \Pic_{\Gamma^{\marked} / S^{\marked}}.$$
\end{itemize}
\end{defn}

\begin{thm} \label{thm_spectral_transform}
Assume that $\charac k \neq 2$. Then the spectral transform $\delta \colon \Tw \dashrightarrow \abfam$ of Definition \ref{def_spectral_transform} is a birational map.
\end{thm}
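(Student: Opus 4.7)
The plan is to produce a rational inverse $\delta^{-1} \colon \abfam \dashrightarrow \Tw$ via the standard \emph{inverse spectral transform}, following \cite{MR3161305}, and then use a dimension count to promote the resulting generic bijection to a birational equivalence. Both $\Tw$ and $\abfam$ are already known to have dimension $2n$: in the odd case the base $S$ has dimension $2m+2 = n+1$ and the generic Jacobian fibre has dimension $g(n) = n-1$ by Theorem~\ref{thm_spec_curve_genus}, and in the even case $S^{\marked} \to S$ is generically finite, so $\dim S^{\marked} = 2m+2 = n+2$ and the fibres have dimension $g(n) = n-2$. In either case the total is $2n$, matching $\dim \Tw$.

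To build the inverse, I would start with a generic point $(\Gamma_s, \mathcal{L}) \in \abfam$ and reconstruct a Baker-Akhiezer function: a rational section $\psi$ of $\mathcal{L}$ on the normalized spectral curve whose pole divisor is cut out by $\mathcal{L}$ and which satisfies prescribed normalizations at the marked points $O_1, O_2$ (and $O_3$ when $n$ is even) and $W_1, W_2$ (and $W_3$ when $n$ is even). A Riemann--Roch calculation, using exactly the genus computed in Theorem~\ref{thm_spec_curve_genus}, shows that the space of such $\psi$ is generically one-dimensional, which pins $\psi$ down up to scalar. Evaluating $\psi$ at the three preimages of a generic $\zeta \in \PP^1$ under $\nsrhmap$ produces a frame depending algebraically on $\zeta$; expressing the monodromy of this frame in terms of the natural basis yields matrices $T_i(\zeta)$ of the shape prescribed in Section~\ref{sect_lax}. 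From the explicit formulas \eqref{eqn:defofLiz}--\eqref{eqn:defofPiz} and the factorization~\eqref{eqn:defofTiprodLj}, the corner invariants $x_i, y_i$ can be read off as ratios of matrix entries, giving a point of $\Tw$. By construction, this recovery inverts $\delta$ on a dense open subset.

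The hardest step is verifying that this inverse construction genuinely produces a rational map, i.e.\ that the Baker-Akhiezer function exists uniquely (up to scale) over a dense open subset of $\abfam$ and varies algebraically in the spectral data. This is where the characteristic-independent genus computation of Theorem~\ref{thm_spec_curve_genus} is essential: Riemann--Roch and the theorem on the dimension of generic cohomology are valid in any characteristic, and the marked-point normalizations cut the relevant sheaf cohomology down to a line bundle on $\abfam$ of the expected rank. The cases $n$ odd and $n$ even are handled separately because the markings differ: when $n$ is even, the distinction between $O_2$ and $O_3$ (respectively $W_2$ and $W_3$) is genuine only after passing to $S^{\marked}$, which is why $\abfam$ is built from $\Pic_{\Gamma^{\marked}/S^{\marked}}^n$ rather than $\Pic_{\Gamma/S}^n$.

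Finally, once a rational inverse is constructed on a nonempty open subset, composition with $\delta$ gives the identity on dense opens of both $\Tw$ and $\abfam$. Combined with the equality $\dim \Tw = \dim \abfam = 2n$, this implies $\delta$ is birational. The hypothesis $\charac k \neq 2$ enters only through Theorem~\ref{thm_spec_curve_genus}; every other ingredient (Riemann--Roch, existence of the relative Picard scheme, the explicit Lax formulas) is valid in arbitrary characteristic, so the argument of \cite{MR3161305} transfers with essentially only cosmetic modifications.
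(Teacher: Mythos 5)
Your proposal is correct and follows essentially the same route as the paper: both adapt Soloviev's inverse spectral transform from \cite{MR3161305}, reconstructing the twisted polygon from the Baker--Akhiezer eigenvector bundle via Riemann--Roch, with the genus computation of Theorem~\ref{thm_spec_curve_genus} supplying the only characteristic-dependent input. The paper's version makes one extra point worth knowing, namely that the divisions by $2$ appearing in Soloviev's complex argument occur only in the construction of the universal symplectic form and not in the divisor calculations, which is part of why the adaptation is as cosmetic as you claim.
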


\begin{proof}
The construction of the rational inverse of $\delta$, the \emph{inverse spectral transform}, is carried out over $\C$ in \cite[Section 3.2]{MR3161305}. It runs for several pages and works almost verbatim for our setting. The structure of the argument over $\C$ is as follows. First, describe the divisor of $\psi_{H(v)}$ on $\Gamma_{H(v)}$, for generic $v$, by an explicit calculation of the vanishing order for each point in $P_\infty$. Second, the genus of the spectral curve (for generic $s \in S$) is $g(n) + 2$, so the Riemann-Roch theorem can be used to show that there is only one such divisor in each linear equivalence class. Third, show that a Lax matrix is determined by the data of its eigenvalues and eigenvector divisor.

Now we adapt the argument to the field $k$. For the first step, the divisor calculations are the same \new{for any algebraically closed field $k$, by inspection of the (many) formulas in \cite[Appendix]{MR3161305}. Note that the divisions by 2 that appear in this argument are not used in the divisor calculations, but rather to construct the universal symplectic form, so they are not relevant to the present argument.} The limits which appear in the calculation are just the leading terms appearing in certain Laurent expansions, so these calculations are all algebraic. We need to work with corner invariants $x_i, y_i$ instead of $ab$-coordinates, but we get the same results.

For the second step, the genus of the generic spectral curve is $g(n)$ if $\charac k \neq 2$, by Theorem \ref{thm_spec_curve_genus}.

The third step, the reconstruction of the Lax matrix, is algebraic in nature and makes sense for any algebraically closed base field.
\end{proof}

While the genus of the spectral curve $\Gamma_s$ for generic $s \in S$ is $g(n) + 2$ by Theorem \ref{thm_spec_curve_genus}, it does not follow that the genus of $\Gamma_{H(v)}$ for generic $v$ is $g(n) + 2$. In fact, there are loci in $S$ that correspond to spectral curves of lower genus, such as the image of $H$ by the subset of $\Tw$ consisting of closed polygons. One way to resolve this issue is to show that the pentagram invariants (the coordinates of $H$) are algebraically independent, so $H$ is dominant; this was done over $\C$ in \cite{MR2434454}. The argument we use here constructs
an explicit birational inverse for $\delta$. Comparing the dimensions of $\Tw^\circ$ and $\abfam$, we obtain algebraic independence of the pentagram invariants as a corollary.

\begin{cor}
The invariants of $f$ are algebraically independent over any field $k$ for which $\charac k \neq 2$.
\end{cor}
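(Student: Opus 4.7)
The plan is to derive the corollary from the birationality of the spectral transform $\delta$ established in Theorem \ref{thm_spectral_transform}, by means of a short diagram chase. No new computation is required; the geometric content has already been extracted from $\delta$, and what remains is to repackage it as a statement about transcendence degree.

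The first step is to rephrase algebraic independence as a geometric statement. Since $S = \A^{2m+2}_k$ is an affine space with coordinate ring the polynomial ring $k[h_1,\dots,h_{2m+2}]$, the invariants $H_1,\dots,H_{2m+2}$ are algebraically independent in $k(\Tw)$ if and only if the rational map $H\colon \Tw\dashrightarrow S$ is dominant: a nontrivial polynomial relation on the $H_i$ would cut out a proper closed subvariety of $S$ containing the image of $H$, and conversely any nondominance provides a polynomial relation.

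The second step is to check that $H$ factors through $\delta$. Let $\pi\colon \abfam\to S$ denote the structure map of the family of abelian varieties; for $n$ even, $\pi$ is the composite $\abfam\to S^{\marked}\to S$, whose second arrow is the generically $4$-to-$1$ marking cover. By construction (Definition \ref{def_spectral_transform}), the line bundle $\delta(v)$ lives on the spectral curve $\Gamma_{H(v)}$, so that $\pi\circ\delta = H$ as rational maps on the domain of definition $\Tw^\circ$.

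The third and final step is to combine dominance of $\delta$ with surjectivity of $\pi$. By Theorem \ref{thm_spectral_transform}, $\delta$ is birational, hence dominant. The map $\pi$ is surjective because its fibers are abelian varieties, which are in particular non-empty; in the even case one also uses that the finite cover $S^{\marked}\to S$ is surjective, hence dominant. A composition of dominant rational maps is dominant, so $H = \pi\circ\delta$ is dominant, which yields the claim. I do not foresee a serious obstacle: the deep input has already been supplied by Theorem \ref{thm_spectral_transform}, and the only real subtlety is the bookkeeping around the marking cover when $n$ is even, which is dispatched at once by the fact that a finite surjection is dominant.
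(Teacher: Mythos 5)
Your proposal is correct and follows essentially the same route as the paper, which also deduces the corollary by observing that $H$ factors through the birational map $\delta$ and that the structure map to $S$ is dominant (the paper phrases this as ``comparing the dimensions of $\Tw^\circ$ and $\abfam$'', but the content is the same dominance argument you give). One small wording note: you only need $\pi$ to be \emph{dominant} rather than surjective, which follows because the generic fiber of $\Pic^{g(n)+2}_{\Gamma/S}$ is a nonempty abelian variety; insisting on full surjectivity invites unnecessary questions about the behavior of the Picard scheme over the degenerate locus in $S$.
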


Finally, we describe the effect of the pentagram map $f$ after birationally identifying $\Tw$ with $\abfam$ via $\delta.$

\begin{thm} \label{thm_translation}
   Let $k$ be an algebraically closed field, and assume $\charac k \neq 2$.
   \begin{itemize}
       \item \boxnodd: Let 
       $$\tau \colon \abfam \to \abfam$$
       denote translation by the section $[-O_1 + W_2] \in \Pic_{\Gamma/S}^0$. Then as rational maps,
       $$f = \delta^{-1} \circ \tau \circ \delta.$$
       \item \boxneven: Let
       $$\tau \colon \abfam \to \abfam$$
       denote translation by the section $[-O_1 + W_2] \in \Pic_{\Gamma^{\marked} /S^{\marked}}$. 
       Let $\iota$ be the map on $\abfam$ induced by the involution on $S^{\marked}$ interchanging the marked roots. 
       Then as rational maps,
       $$f = \delta^{-1} \circ \tau \circ \iota \circ \delta.$$
   \end{itemize}
\end{thm}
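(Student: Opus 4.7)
The plan is to verify the identity $\delta \circ f = \tau \circ \delta$ (respectively $\delta \circ f = \tau \circ \iota \circ \delta$ when $n$ is even) on a Zariski dense open subset of $\Tw^\circ$, then extend to a rational identity. By Theorem \ref{thm_laxrep}, the pentagram map sends the Lax matrix $T_0(v,\zeta)$ to $P_0(v,\zeta) T_0(v,\zeta) P_0(v,\zeta)^{-1}$ projectively, so $\Gamma_{H(f(v))} = \Gamma_{H(v)}$, and an eigenvector $\psi$ of $T_0$ at a point $(\lambda,\zeta)$ of the spectral curve is sent to the eigenvector $P_0 \psi$ of the new Lax matrix with the same eigenvalue. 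Consequently $\delta(f(v))$ is the line bundle on $\Gamma_{H(v)}$ whose fiber over $(\lambda,\zeta)$ is spanned by $P_0(v,\zeta)\psi(v,\lambda,\zeta)$, and the task reduces to computing the divisor class $\delta(f(v)) - \delta(v)$ in $\Pic_{\Gamma/S}^0$.

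To compute this difference, choose local trivializing sections $\psi$ of $\delta(v)$ and $\psi'$ of $\delta(f(v))$, write $P_0 \psi = g\,\psi'$ for a meromorphic scalar $g$ on $\Gamma_{H(v)}$, and observe that the divisor of $g$ is exactly $\delta(f(v)) - \delta(v)$. The explicit formula \eqref{eqn:defofPiz} for $P_0$ is regular and invertible on the main affine patch, so only the points of $P_\infty$ can contribute zeros and poles. At each such point I would substitute the local coordinates from Table \ref{table_resolution_of_singularities} together with the Laurent expansion of $\psi$ determined in the proof of Theorem \ref{thm_spectral_transform}, and read off the vanishing order of $P_0 \psi$ relative to $\psi$. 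Summing the contributions yields the divisor $-O_1 + W_2$, which establishes the case $n$ odd.

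For $n$ even, the additional ingredient is the effect of $f$ on the marking. The marked square roots $\hat x_0 = \prod_i x_{2i}$ and $\hat z_0 = \prod_i y_{2i}$ chosen in Definition \ref{def_spectral_transform} are products of even-indexed corner invariants. Inspecting Proposition \ref{prop_formulas}, the pentagram map shifts these products onto the odd-indexed corner invariants; applying Vi\`ete's formulas to the defining quadratics of $\hat x_0$ and $\hat z_0$ identifies this shift with the swap of the two roots, that is, with the involution $\iota$. Combining this marking swap with a divisor calculation formally identical to the odd case gives $f = \delta^{-1} \circ \tau \circ \iota \circ \delta$.

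The hard part is the divisor bookkeeping at the points of $P_\infty$. The eigenvector $\psi$ has delicate pole-zero behavior at these singular points of $\Gamma$, and the resolution charts of Table \ref{table_resolution_of_singularities} must be applied systematically to extract leading terms. This computation was carried out over $\C$ in \cite[\S 3.2]{MR3161305}; since it depends only on leading terms of Laurent expansions and on linear algebra over $k$, with no division by $2$ entering the divisor step itself, the argument transposes verbatim to any algebraically closed field of characteristic $\neq 2$.
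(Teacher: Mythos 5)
Your proposal follows the same overall route as the paper, which simply cites Soloviev's Section 4 argument over $\C$ and observes that it is insensitive to the base field once Theorem \ref{thm_spectral_transform} is available. Your account usefully spells out the underlying mechanism (Lax conjugation transports the eigenvector bundle via $P_0$, and the shift is measured by a divisor concentrated on $P_\infty$), and correctly identifies that the even-$n$ marking swap comes from how $f^*$ permutes the products $\prod_i x_{2i}$ and $\prod_i y_{2i}$.

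One sentence in your sketch is stated a bit loosely: you write that the divisor of the scalar $g$ defined by $P_0\psi = g\psi'$ ``is exactly $\delta(f(v)) - \delta(v)$.'' Since $g$ is a global rational function on $\Gamma_{H(v)}$, its divisor is principal and hence trivial in $\Pic^0$, so it cannot by itself realize a nontrivial class. The correct bookkeeping is that if $\psi$, $\psi'$ are global meromorphic eigenvector sections with pole divisors $D_v$, $D_{f(v)}$, then $\operatorname{div}(P_0\psi)$ differs from $D_v$ by a divisor $E$ supported on $P_\infty$ (coming from where $P_0(\zeta)$ drops rank), and $\operatorname{div}(g) = D_v + E - D_{f(v)}$; the class $[E]$, computed from the local orders at $O_1, O_2, W_1, W_2$ in Table \ref{table_resolution_of_singularities}, is what equals $[-O_1 + W_2]$. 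This is exactly the computation carried out in \cite[Section 4]{MR3161305}, so your reliance on it is appropriate; I would only ask you to phrase the divisor identity as above rather than attributing the whole class to $\operatorname{div}(g)$.
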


\begin{proof}
 The proof is carried out over $\C$ in \cite[Section 4]{MR3161305}. The same argument works without changes over any algebraically closed field for which the result of Theorem \ref{thm_spectral_transform} holds, again making the necessary change of replacing $ab$-coordinates with corner invariants.
\end{proof}

We now have the pieces of our main theorem.
\begin{proof}[Proof of Theorem \ref{thm_main_1}]
Theorem \ref{thm_moduli}, Theorem \ref{thm_spectral_transform}, Theorem \ref{thm_translation}.
\end{proof}

\new{We end this section with an application of Theorem \ref{thm_translation} to arithmetic complexity, partially confirming an empirical observation made in \cite[Section 5]{MR3282373}. Let $k = \bar{\Q}$, and let
$$ h_{\Tw}^{\Weil} : \Tw(\bar{\Q}) \to \R$$
be a (logarithmic) Weil height function on $\Tw$ relative to some chosen divisor; see \cite{MR1745599} for background. In \cite[Fig. 3]{MR3282373}, it is observed that the growth of $h_{\Tw}^{\Weil}$ appears to be polynomial. In fact, for sufficiently generic polygons, the height growth is linear.}

\new{
\begin{cor} \label{cor_height_growth}
Let $n \geq 4$, and let $v \in \Tw(\bar{\Q})$ be a twisted $n$-gon defined over $\bar{\Q}$ that is in the domain of $\delta$ and the image of $\delta^{-1}$. There is a constant $C = C(v) > 0$ depending on $v$, such that, for all $t \in \N$ such that $f^t(v)$ is defined, we have
$$h_{\Tw}^{\Weil} (f^t(v)) \leq C t.$$
\end{cor}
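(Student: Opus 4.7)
The plan is to transport the dynamics of $f$ from $\Tw$ to the family $\mathcal{A}$ of abelian varieties via the spectral transform $\delta$ of Theorem \ref{thm_translation}, and then apply standard height theory on abelian varieties.

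First, by Theorem \ref{thm_translation}, the map $\delta$ conjugates $f$ with translation $\tau$ by a section $P=[-O_1+W_2]$ of $\mathcal{A}\to S$ when $n$ is odd, and with $\tau\circ\iota$ when $n$ is even. Since $\tau$ and $\iota$ are morphisms defined on all of $\mathcal{A}$, and $v$ lies in the intersection of the domain of $\delta$ and the image of $\delta^{-1}$, every iterate $f^t(v)$ also lies in this common regular locus. Moreover, the entire orbit $\{f^t(v)\}_{t\geq 0}$ is confined to a single fiber $A_s$ of $\mathcal{A}$ with $s=H(v)$, because the coordinates $H_1,\dots,H_{2m+2}$ are $f$-invariant by Corollary \ref{cor_pentagram_invariants}.

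Second, I would invoke the standard functoriality of Weil heights under morphisms of projective varieties (see \cite{MR1745599}): on the common regular locus of a birational map between projective varieties, comparing Weil heights across the map introduces at most a bounded error. Applied to $\delta$ and $\delta^{-1}$ along the orbit of $v$, this yields
$$h_{\Tw}^{\Weil}(f^t(v)) = h_{\mathcal{A}}^{\Weil}(\tau^t(\delta(v))) + O(1),$$
with implied constant depending on $v$ but independent of $t$, where $h_{\mathcal{A}}^{\Weil}$ is a Weil height on $\mathcal{A}$ relative to the divisor class pulled back from $\Tw$ under $\delta^{-1}$.

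Third, I would exploit the translation structure on the fiber $A_s$. The Néron-Tate canonical height $\hat h$ on $A_s$ for any line bundle $L$ decomposes as a quadratic form plus a linear form, corresponding to the decomposition of $L$ into symmetric and antisymmetric components; hence $\hat{h}(Q+tP)$ is a polynomial in $t$ of degree at most $2$. For the specific divisor class pulled back from $h_{\Tw}^{\Weil}$, the symmetric component's contribution along the cyclic subsemigroup $\{tP\}_{t\ge 0}$ can be absorbed into a linear bound by choosing a Weil height whose ample component pairs trivially with $P$, giving $h_{\mathcal{A}}^{\Weil}(\tau^t(\delta(v))) \leq C t$ for a constant $C=C(v)$. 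In the even case, one handles the involution $\iota$ by noting $\iota^2=\mathrm{id}$ and that $\iota$ contributes a uniformly bounded error; combining with Step 2 completes the proof.

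The main obstacle is Step 3: identifying the divisor class on $\mathcal{A}$ associated to the Weil height on $\Tw$ via $\delta$, and verifying that the symmetric component of this class contributes at most a linear (rather than quadratic) term along the orbit of $P$. A further subtlety is ensuring that the $O(1)$-error in Step 2 remains uniformly bounded along the entire orbit, which follows from the confinement of the orbit to a fixed fiber $A_s$ together with the fact that $\tau$ and $\iota$ are everywhere-defined morphisms on $\mathcal{A}$.
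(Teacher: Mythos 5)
Your overall approach matches the paper's: transport the dynamics to the fiber $A = \mathcal{A}_{H(v)}$ via $\delta$, observe that the orbit is confined to $A$ because the coordinates of $H$ are $f$-invariant (Corollary \ref{cor_pentagram_invariants}), compare Weil heights across $\delta$ up to a bounded error via functoriality, and then bound the growth of the canonical height along the translation orbit on $A$. Your Steps 1 and 2 reproduce the paper's argument. The divergence is in Step 3.

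Your Step 3 has a genuine gap. You correctly observe that for a divisor $D$ on $A$, the canonical height $\hat{h}_D(Q+tP)$ is a polynomial of degree at most $2$ in $t$, with quadratic coefficient governed by the N\'eron--Tate pairing of $P$ against the symmetric part of $D$. But your proposed fix — ``choosing a Weil height whose ample component pairs trivially with $P$'' — is not available. The function $h_{\Tw}^{\Weil}$ is \emph{given} in the hypotheses (``relative to some chosen divisor''), so the divisor class induced on $A$ through $\delta^{-1}$ is determined; you do not get to re-choose it after seeing $P$. Worse, the quadratic form attached to a symmetric ample divisor is positive-definite modulo torsion, so it cannot pair trivially with a non-torsion $P$; the only way your device could work is if $P$ is torsion, in which case the orbit is finite and the bound is trivial anyway. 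The paper takes a more elementary and more explicit route at exactly this point: it uses the parallelogram law to reduce the problem to estimating $\hat{h}_A(ta)$, and then invokes the scaling identity $\hat{h}_A(ma) = m\,\hat{h}_A(a)$ together with the boundedness of $|\hat{h}_A - h_A^{\Weil}|$, assembling these into a three-line chain of inequalities. Note that the paper's scaling identity is precisely what delivers linearity rather than the quadratic growth you (rightly) anticipated as the generic outcome; it holds for the antisymmetric part of a divisor but not for a symmetric ample one, so this is the step in the paper's argument that most deserves careful scrutiny, rather than a point you can handwave past by a post hoc choice of height.
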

}
\begin{proof}
\new{
First, assume $n$ is odd. Let $A$ be the fiber of $\abfam \to S$ containing $\delta(v)$. According to Theorem \ref{thm_spectral_transform}, there exists a point $a \in A$ such that, for any twisted $n$-gon $w$ with $\delta(w) \in A$, we have
$$f(w) = \delta^{-1}(\delta(w) + a).$$
By functoriality of Weil heights, the pullback of $h_{\Tw}^{\Weil}$ to $A$ is a Weil height on $A$, which we denote $h_{A}^{\Weil}$. 
Thus, for all $t \in \N$ such that $f^t(v)$ is defined, we have
$$h_{\Tw}^{\Weil} (f^t(v)) = h_A^{\Weil}(\delta(v) + ta).$$
The growth of the right side is well-known to be at most linear in $t$. For completeness, we give the argument here. Since $A$ is an abelian variety defined over $\mathbb{\Q}$, it admits a canonical height $\hat{h}_A: A \to \R$; see \cite[Chapter B.5]{MR1745599} for all the properties we use. There exists a constant $C_0$ such that, for all $a_0 \in A$, we have
$$|\hat{h}_A(a_0)  - h_A^{\Weil}(a_0) | \leq C_0.$$
Further, for any $m \in \N$,
$$\hat{h}_A (m a) = m \hat{h}_A (m).$$
Finally, the parallelogram law implies that there is a constant $C_1$ independent of $w$ such that, for all $a_0 \in A$, we have
$$h_A^{\Weil}(\delta(v) + a_0) \leq 2 h_A^{\Weil}(a_0) + C_1. $$
Thus
\begin{align*}
h_A^{\Weil}(\delta(v) + ta) & \leq 2 h_A^{\Weil}(ta) + C_1 \\
& \leq 2 (\hat{h}_A (ta) + C_0) + C_1 \\
& \leq 2 t (\hat{h}_A (a) + C_2)
\end{align*}
for some sufficiently large constant $C_2$, completing the proof for $n$ odd.}

\new{Now let $n$ be even. The proof above shows that the height growth of the second iterate $f^2$ is linear. Since the growth is linear for the orbits of $f^2$ starting from both $v$ and $f(v)$, the orbit of $f$ starting at $v$ also has linear growth; just take the larger of the two constants.
}
\end{proof}

\section{A Collapse Conjecture for the Pentagram Map over Finite Fields} \label{sect_dynamical_domains}

Theorem \ref{thm_main_1} shows that the same algebro-geometric structure underlies the pentagram map over various fields. However, the implications for the dynamics over $\C$ and $\bar{\F}_p$ could not be more different. In this section, we sketch an argument that a randomly chosen twisted polygon in $\bar{\F}_p$, upon iteration, is likely to enter the degeneracy locus of the pentagram map. Geometrically, this corresponds to some iterate of the map being a degenerate polygon. This section may be read independently of the proof of Theorem \ref{thm_main_1}.

\begin{defn} \label{def_rat_orbit}
Given a rational map $\phi \colon V \dashrightarrow V$, let $\Ind_\phi \subset V$ denote the indeterminacy locus of~$\phi$. Given $N \in \N$, if $v \in \Ind_{\phi^N}$, we say \emph{the $N$-th iterate of $\phi$ at $v$ is undefined} and write formally
$$\phi^N(v) = *.$$
Otherwise, the point $\phi^N(x)$ is the \emph{$N$-th iterate of $x$}.

The \emph{orbit} of a point $v \in V$ is the set of iterates of $v$, valued in $V \cup \{*\}$. We also use the term \emph{orbit} to refer to the sequence of iterates of $v$, rather than the set they form.

A \emph{preperiodic point} is a point $v$ such that, for some values $M \neq N$ in $\N$, the iterates $\phi^M(v)$ and $\phi^N(v)$ are defined and $\phi^M(v) = \phi^N(v)$. A \emph{periodic point} of \emph{period $N$} is a point $v$ such that $\phi^N(v) = v$.
\end{defn}

Note that, if we were to use the inductive definition of orbit
$$\phi^0(v) = v, \quad \phi^{N+1}(v) = \phi(\phi^N(v)),$$
then each orbit would terminate after hitting the indeterminacy locus. Definition \ref{def_rat_orbit} makes it possible, in certain cases, to talk about the behavior of an orbit even \emph{after} meeting the indeterminacy locus.

\begin{example} \label{ex_crem}
Consider the Cremona involution on $\PP^2$. In homogeneous coordinates, the map is defined by
$$\phi: \PP^2 \dashrightarrow \PP^2,$$
$$[X: Y : Z] \mapsto [YZ : XZ: XY].$$
Since $\phi^2$ is the identity, every point is periodic with period 1 or 2. The point $v = [1:0:0]$ is in $\Ind_\phi$,
and
$$\shO_\phi(v) = v, \; *, \; v, \; *, \; \ldots$$
Let $w \in \PP^2$ be any point such that $X = 0$ and $Y, Z \neq 0$. Then
$$\shO_\phi(w) = w, \; v, \; w, \; v, \; \ldots$$
Thus different choices of $w$ give orbits which pass through the same point $v$ but then ``remember'' information from earlier in the orbit.
\end{example}

Our definition of orbit, Definition \ref{def_rat_orbit}, is unusual at first glance, but it is well-suited for the algebraic dynamics of rational maps, as we now explain.

In real and complex dynamics, it is common to restrict the domain so that a rational map $\phi \colon V \dashrightarrow V$ becomes a function, set-theoretically. This means throwing away all points with an iterate in $\Ind_\phi$. We call the remaining subset the \emph{dynamical domain}, denoted $\DynDom(\phi)$. Formally, writing $\phi^{-N}$ for the $N$-fold inverse image,
$$\DynDom(\phi) := V \smallsetminus \bigcup_{N = 0}^\infty \phi^{-N}(\Ind_\phi).$$

Over $\R$ and $\C$, if $\phi$ is dominant, then since $\Ind_\phi$ has measure~$0$, we end up deleting a set of measure~$0$ 
to obtain $\DynDom(\phi)$. This leaves a nice measure space on which to study the generic dynamics of the system.

\new{Over countable algebraically closed fields, such as $\bar{\Q}$ and $\bar{\F}_p$,} there is a potential problem in setting up the dynamical domain: a countable union of proper subvarieties can contain all the points of the domain. However, it turns out that the dynamical domain is never empty \new{over $\bar{\F}_p$}. By work of Hrushovski, $\DynDom(\phi)$ is Zariski dense in $V$; see \cite[Corollary 2]{MR2784670}. Nevertheless, on the level of points, the dynamical domain could still be very small. To measure the size of the dynamical domain, one can study the fraction of orbits which degenerate while going up a tower of finite fields. 

In the survey of arithmetic dynamics \cite{MR4007163}, Conjecture 18.10b states that the dynamical domain is large in the sense that, for any rational map $\phi: \PP^n \dashrightarrow \PP^n$ over $\F_q$,
$$\lim_{r \to \infty} \frac{\# \DynDom(\phi) \cap \PP^n(\F_{q^r} )}{\# \PP^n(\F_{q^r})} = 1.$$
But computer experiments and Theorem \ref{thm_main_1} suggest that the pentagram map is a counterexample.
\begin{conj} \label{conj_small_domain}
Let $p$ be a prime, and let $f$ denote the pentagram map on the moduli space of twisted $n$-gons over $\bar{\F}_p$. Then
$$\lim_{r \to \infty} \frac{\# \DynDom(f) \cap \Tw(\F_{p^r})}{\# \Tw(\F_{p^r})} = 0.$$
\end{conj}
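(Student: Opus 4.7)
The plan is to reduce the conjecture, via Theorem \ref{thm_main_1}, to an equidistribution statement for cyclic orbits of translations on abelian varieties over $\F_{p^r}$. Fixing $r$ large and $s \in S(\F_{p^r})$ in general position, write $A_s$ for the fiber of $\abfam \to S$ above $s$; this has dimension $d = n - 1$ when $n$ is odd and $d = n - 2$ when $n$ is even. By Theorem \ref{thm_translation}, the pentagram map $f$ (or $f^2$ in the even case) acts on $A_s(\F_{p^r})$ as translation by a point $a_s \in A_s(\F_{p^r})$ coming from the section $-O_1 + W_2$. Pulling back $\Ind_f \subset \Tw$ along $\delta^{-1}$, one obtains a subscheme of $\abfam$ whose intersection $D_s$ with a generic fiber is a nonempty divisor; nonemptiness should follow by direct inspection of Proposition \ref{prop_formulas}, since the vanishing locus $\{1 - x_i y_i = 0\}$ meets a generic abelian fiber in codimension one.

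A point $v \in A_s(\F_{p^r})$ lies in $\DynDom(f) \cap A_s$ iff the orbit $v + \langle a_s \rangle$ misses $D_s$, so the complement of $\DynDom(f)$ inside $A_s(\F_{p^r})$ equals the saturation
$$D_s^{\ast} \;=\; \bigcup_{t = 0}^{N_s - 1}\bigl(D_s - t\,a_s\bigr), \qquad N_s \;=\; |\langle a_s \rangle|.$$
By the Weil bounds, $|A_s(\F_{p^r})| \asymp p^{rd}$ and $|D_s(\F_{p^r})| \ll p^{r(d-1)}$, so the conjecture reduces to showing, for a proportion $1 - o(1)$ of $s \in S(\F_{p^r})$, that $|D_s^{\ast}|/|A_s(\F_{p^r})| \to 1$ as $r \to \infty$. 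A natural two-step strategy is: (i) bound $N_s$ from below, for instance $N_s \geq p^{r(1 + \varepsilon)}$ for most $s$; and (ii) control the pairwise overlaps of the translates $D_s - t\,a_s$ via a character-sum estimate on $A_s$ of Deligne--Katz type, which would give $|D_s^{\ast}|$ close to $\min(|A_s(\F_{p^r})|,\, N_s \cdot |D_s(\F_{p^r})|)$.

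The main obstacle, I expect, will be step (i): showing that for most $s$ the order of $a_s$ is substantially larger than $p^r$. Although this is the heuristic expectation for a ``Zariski-generic'' section of an abelian scheme, rigorous statements in this direction lie close to conjectures of Pink and Zannier on unlikely intersections in families of abelian varieties, and appear to require genuine arithmetic input about either the endomorphism ring or the arithmetic monodromy of the Jacobians of the spectral curves of Definition \ref{def_spec_curve}. A possibly more tractable alternative would be to first prove a weaker $\liminf$ version of the conjecture by combining Hrushovski's theorem on dense definable subsets of Frobenius fixed points \cite{MR2784670} with the observation, following from Theorem \ref{thm_main_1}, that $\DynDom(f)$ is cut out by a countable union of closed subschemes of $\Tw$ with controlled fiber dimensions over $S$.
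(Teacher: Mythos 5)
The statement you are addressing is labeled a \emph{conjecture} in the paper, and the paper does not prove it: Section \ref{sect_dynamical_domains} offers only an informal ``plausibility argument.'' Your proposal is likewise not a proof --- you say so yourself at step (i) --- so the right comparison is between the two heuristics, and there you track the paper's route closely. Both arguments apply Theorem \ref{thm_main_1} to recast the question as one about orbits of a translation $a_s$ on the Jacobian fibers $A_s(\F_{p^r})$, both observe that $\Ind_f$ should meet a generic fiber in codimension one, and both identify the order $N_s$ of $a_s$ as the quantity controlling whether the degeneracy locus is swept out.

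Where you refine the paper's sketch: introducing the saturation $D_s^\ast$ makes the reduction precise, and your two-step program --- a lower bound on $N_s$, followed by character-sum control of overlaps among the translates $D_s - t\,a_s$ --- is a cleaner and more honest formulation than the paper's appeal to Jacobians being ``close to cyclic.'' You also correctly diagnose step (i) as the genuine obstruction and connect it to Pink--Zannier-type questions on torsion specialization in abelian schemes, a connection the paper does not make. Both treatments share the same remaining gaps: the paper explicitly flags the need to check that the degeneracy locus is not collapsed by $\delta$, and your one-line justification that $D_s$ is a nonempty divisor (via the loci $\{1 - x_i y_i = 0\}$ in Proposition \ref{prop_formulas}) requires precisely that verification, together with an accounting of the indeterminacy loci of $\delta$ and $\delta^{-1}$ themselves, which neither argument addresses. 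In sum: same approach, no proof on either side, but your version is sharper about where the real difficulties lie, and your suggested Hrushovski-based $\liminf$ variant is a reasonable fallback target not mentioned in the paper.
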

From the formulas for the pentagram map in Proposition \ref{prop_formulas}, homogenizing via the scaling invariance, one obtains an explicit map $\phi: \PP^{2n - 1} \dashrightarrow \PP^{2n - 1}$ with the same property.

If we are right about Conjecture \ref{conj_small_domain}, then the dynamical domain is badly behaved over finite fields, \new{similar results might hold for other discrete integrable systems}. On the other hand, our main theorem shows that a birational change of domain can almost completely remove the dynamical indeterminacy of the map. The situation here is reminiscent of the notions of good reduction versus potential good reduction in arithmetic dynamics. It would be interesting to see whether more general maps admit conjugacies that improve their dynamical domains.

\subsection{Plausibility argument}
We now explain how Theorem \ref{thm_main_1} provides heuristic evidence for Conjecture \ref{conj_small_domain}.

For simplicity, let $n$ be odd. Let $S$ be the base of the family $\abfam$ of abelian varieties in Theorem \ref{thm_main_1}. Fix $p$ and consider various powers $q = p^r$. Each $\F_q$-point of $S$ gives us an $f$-invariant subvariety of $\Tw$ defined over $\F_q$. Some of the $\F_q$-points of $\Tw$ belong to invariant subvarieties defined over smaller finite fields than $\F_q$, but we claim these are relatively rare. Let $q = p^r$. For each proper subfield $\F_{p^\ell}$ of $\F_q$, we see $O(p^{(n+1)\ell})$ members of $\abfam$ defined over $\F_{p^\ell}$, each containing $O(p^{(n-1)r})$ points over $\F_q$. But these make up only a small fraction of the $O(p^{2nr})$ points in $\Tw(\F_q)$. 

The pentagram map is undefined at a point if the construction of the image polygon results in a degenerate $n$-gon; this occurs if some set of points of the form $v_i, v_{i+1}, v_{i+2}$ or $v_i, v_{i+2}, v_{i+4}$ are collinear.

Let $A$ be a member of $\abfam$ defined over $\F_q$. We claim almost all the $\F_q$-points eventually hit $\Ind_f \cap A$. To see this, observe that the degeneracy locus has codimension 1, so it should cut a codimension 1 subset out of each invariant fiber. So $\Ind_f \cap A$ contains $O(1/q)$ of the $\F_q$-points of $A$. The group of points of a random Jacobian over $\F_q$ is usually close to being cyclic, so the order of a random translation on a random Jacobian is close to the order of the whole group. Thus, tracking the motion on the Jacobian, $A(\F_q)$ should be roughly one orbit under $f$, so hitting a degeneracy point is highly likely.

To formalize the above argument, some substantial work would be needed to understand the intersections $\Ind_f \cap A$, to understand the randomness of the Jacobians and the translations, and to make sure that the degeneracy locus is not collapsed by~$\delta$.

\bibliographystyle{plain}
\bibliography{bib}
\end{document}